\newcommand{\bN}{{\mathbf N}}
\newcommand{\bF}{{\mathbf F}}
\newcommand{\bO}{{\mathbf O}}
\newcommand{\bC}{{\mathbf C}}
\newcommand{\bZ}{{\mathbf Z}}
\newcommand{\Aut}{{{\operatorname{Aut}}}}
\newcommand{\Pro}{{{\operatorname{Pr}}}}
\newcommand{\GL}{\operatorname{GL}}
\newcommand{\SL}{\operatorname{SL}}
\newcommand{\Sym}{\operatorname{Sym}}
\newcommand{\Syl}{\operatorname{Syl}}
\newcommand{\fpr}{\operatorname{fpr}}
\newtheorem{thm}{Theorem}[section]
\newtheorem{lem}[thm]{Lemma}
\newtheorem{prop}[thm]{Proposition}
\newtheorem{cor}[thm]{Corollary}
\newtheorem*{thmA}{Theorem A}
\newtheorem*{conA'}{Conjecture A'}
\newtheorem*{thmB}{Theorem B}
\newtheorem*{thmC}{Theorem C}
\newtheorem*{conD}{Conjecture D}
\newtheorem*{thmE}{Theorem E}
\newtheorem*{thmF}{Theorem F}
\newtheorem*{thmG}{Theorem G}
\newtheorem*{thmH}{Theorem H}
\theoremstyle{definition}
\newtheorem{rem}[thm]{Remark}
\newtheorem*{note}{Note}
\numberwithin{equation}{section}
\def\syl#1#2{{\rm Syl}_{#1}(#2)}
\def\norm#1#2{{\bf N}_{#1}(#2)}
\begin{document}

\title[Fixed point ratios, Sylow Numbers and coverings of $p$-elements]{Fixed point ratios, Sylow Numbers and coverings of $p$-elements  in finite groups }

\author[R. M. Guralnick]{Robert M. Guralnick}
\address{Department of Mathematics, University of Southern California, Los Angeles, CA 90089-2532, USA}
\email{guralnic@usc.edu}

\author[A. Mar\'oti]{Attila Mar\'oti}
\address{Hun-Ren Alfr\'ed R\'enyi Institute of Mathematics, Re\'altanoda Utca 13-15, H-1053, Budapest, Hungary}
\email{maroti@renyi.hu}

\author[J. Mart\'{\i}nez]{Juan Mart\'{\i}nez Madrid}
\address{Departament de Matem\`atiques, Universitat de Val\`encia, 46100
  Burjassot, Val\`encia, Spain}
\email{Juan.Martinez-Madrid@uv.es}

\author[A. Moret\'o]{Alexander Moret\'o}
\address{Departament de Matem\`atiques, Universitat de Val\`encia, 46100
  Burjassot, Val\`encia, Spain}
\email{alexander.moreto@uv.es}

\author[N. Rizo]{Noelia Rizo}
\address{Departament de Matem\`atiques, Universitat de Val\`encia, 46100
  Burjassot, Val\`encia, Spain}
\email{noelia.rizo@uv.es}

\thanks{The first author was partially supported by the NSF grant DMS-1901595 and a Simons Foundation Fellowship 609771. The second, the third, the fourth and the fifth authors were supported by Ministerio de Ciencia e Innovaci\'on (Grants PID2019-103854GB-I00 and PID2022-137612NB-I00 funded by MCIN/AEI/10.13039/501100011033 and ``ERDF A way of making Europe") and CIAICO/2021/163. The second author was also supported by the National Research, Development and Innovation Office (NKFIH) Grant No.~K138596, No.~K132951 and Grant No.~K138828. The third author was also funded by CIACIF/2021/228 funded by Generalitat Valenciana. The fourth and  fifth author were supported by a CDEIGENT grant CIDEIG/2022/29 funded by Generalitat Valenciana.}

\keywords{fixed point ratio, Sylow numbers, covering}

\subjclass[2020]{Primary 20B15; Secondary 20D20, 20F69}

\date{\today}

\begin{abstract}
Fixed point ratios for primitive permutation groups have been extensively studied. Relying on a recent work of Burness and Guralnick, we obtain further results in the area. For a prime $p$ and a finite group $G$, we use fixed point ratios to study the number of Sylow $p$-subgroups of $G$ and the minimal size of a covering by proper subgroups of the set of $p$-elements of $G$.
\end{abstract}

\dedicatory{Dedicated to Gabriel Navarro on his 60th birthday.}

\maketitle



\section{Introduction}

Let $G$ be a transitive permutation group acting on a finite set $\Omega$ with point stabilizer $H$. The fixed point ratio of an element $x \in G$ on $\Omega$ is defined to be
$$
\fpr(x,\Omega)=\frac{|\bC_{\Omega}(x)|}{|\Omega|},
$$
where $\bC_{\Omega}(x)=\{\alpha\in\Omega\mid \alpha^x=\alpha\}$ is the set of fixed points of $x$.
This is equal to $|x^G\cap H|/|x^G|$ where $x^G$ is the conjugacy class of $x$ in $G$. (See \cite[Lemma 1.2(iii)]{bur}.) 

Fixed point ratios have been extensively studied since the origins of group theory in the 19th century. They have a wide range of applications, both in group theory and in other areas of mathematics. For instance, they have been used to investigate the structure of monodromy groups of coverings of the Riemann sphere. This was used to prove a conjecture of Guralnick and Thompson \cite{gt}. We refer the reader to \cite{bur} and to the introduction of \cite{bg} for good overviews of the results and applications. 

Recently, new results on fixed point ratios of primitive permutation groups were obtained by Burness and Guralnick \cite{bg}. They proved that if $G$ is a primitive permutation group on a finite set $\Omega$ and $x\in G$ is an element of prime order $p$, then $\fpr(x,\Omega)\leq 1/(p+1)$, except for a small list of exceptions (see \cite[Theorem 1]{bg}). This has had several applications. First, Burness and Guralnick  classified the primitive permutation groups of degree $m$ with minimal degree less than $2m/3$, extending earlier work of Liebeck and Saxl \cite{LSax} and Guralnick and Magaard \cite{gur-mag}. Second, they used it to investigate the minimal index of a primitive permutation group, which is closely related to a question of Bhargava \cite{bar} on the Galois groups of polynomials. Third, in joint work with Moret\'o and Navarro \cite{bgmn}, it was used to study the commuting probability of $p$-elements (and more generally, the commuting probability of $\pi$-elements in \cite{mar}).

Let $p$ be a prime. Our first goal is to obtain a sharp upper bound for the fixed point ratio of some $p$-elements in primitive permutation groups whose orders are divisible by $p$. See Theorem \ref{oldA} and Corollary \ref{corprimitive}. This result has the following consequence for transitive permutation groups. 

\begin{thmA}
Let $G$ be a transitive permutation group on a finite set $\Omega$. Let $p$ be a prime divisor of the size of $G$. Let $P$ be a Sylow $p$-subgroup of $G$. 
\begin{itemize}
		
\item[(i)] If $G$ is generated by its $p$-elements, then there exists a $p$-element $x$ in $G$ such that $$\fpr(x,\Omega) \leq \frac{p-1}{2p-1}.$$ 

\item[(ii)] If $G$ is generated by its $p$-elements and if $G$ has no factor group isomorphic to a (non-abelian simple) alternating group $A_{m}$ with $p+1 < m < p^{2}-p$ or to the group $\mathrm{SL}(2,p+1)$ with $p$ a Mersenne prime, then there exists a $p$-element $x$ in $G$ such that $$\fpr(x,\Omega) \leq \frac{1}{p+1}.$$   

\item[(iii)] If $p>2$, then the number of orbits of $P$ on $\Omega$ is at most $$\frac{p}{2p-1} \cdot |\Omega|.$$ 

\item[(iv)] If $p>2$ and the subgroup generated by the $p$-elements of $G$ has no factor group isomorphic to an alternating group $A_{m}$ with $p+1 < m < p^{2}-p$, then 
\begin{itemize}
\item[(a)] the number of orbits of $P$ on $\Omega$ is at most $(2/(p+1)) |\Omega|$ and

\item[(b)] there exists a $p$-element $x$ in $G$ such that $\fpr(x,\Omega) < 2/(p+1)$.  
\end{itemize}
\end{itemize}
\end{thmA}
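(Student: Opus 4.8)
The plan is to reduce all four parts to the primitive case, which is Theorem~\ref{oldA} together with Corollary~\ref{corprimitive}, and then to read off the consequences for a transitive group. For parts~(i) and~(ii) I would write $\Omega = G/H$, pick a maximal subgroup $M$ of $G$ with $H \le M$, and let $K = \bigcap_{g\in G} M^{g}$ be the core of $M$, so that $\bar G := G/K$ acts faithfully and primitively on $G/M$. From $x^{G}\cap H \subseteq x^{G}\cap M$ together with the identity $\fpr(x,\Omega) = |x^{G}\cap H|/|x^{G}|$ one gets the monotonicity $\fpr(x,G/H)\le\fpr(x,G/M)$ for every $x\in G$, and since the action on $G/M$ factors through $\bar G$, $\fpr(x,G/M)=\fpr(xK,G/M)$. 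Now $\bar G$ is again generated by its $p$-elements and is nontrivial (as $M\ne G$), hence $p\mid|\bar G|$; for~(ii) I would additionally check that the exceptional primitive groups of Theorem~\ref{oldA} --- essentially the almost simple groups with socle $A_{m}$ in the relevant range, and $\PSL(2,p+1)=\SL(2,p+1)$ on the projective line when $p$ is a Mersenne prime --- force $A_{m}$, respectively $\SL(2,p+1)$, to be a factor group of $G$, using that a group generated by $p$-elements (with $p$ odd) cannot surject onto $S_{m}$ nor onto an extension of $\SL(2,p+1)$ by field automorphisms; the hypothesis of~(ii) then excludes these. Applying Theorem~\ref{oldA} and Corollary~\ref{corprimitive} to $\bar G$ acting on $G/M$ produces a $p$-element $\bar x\in\bar G$ with the stated bound on $\fpr(\bar x,G/M)$; choosing a Sylow $p$-subgroup of $G$ that maps onto one of $\bar G$ containing $\bar x$, and a preimage $x$ of $\bar x$ in it, yields a $p$-element $x\in G$ with $\fpr(x,\Omega)\le\fpr(x,G/M)=\fpr(\bar x,G/M)$.

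For parts~(iii) and~(iv) I would combine the elementary estimate, valid for any $p$-group $P$ acting on a finite set $\Omega$ (split $\Omega$ into the $P$-fixed points and the $P$-orbits of size at least $p$),
\[
\#\{\text{$P$-orbits on }\Omega\}\;\le\;\frac{|\Omega|+(p-1)\,|\bC_{\Omega}(P)|}{p},
\]
with the trivial bound $|\bC_{\Omega}(P)|\le|\bC_{\Omega}(x)|=\fpr(x,\Omega)\,|\Omega|$ for any $x\in P$. Then a single $p$-element $x$ --- which after conjugation we may take inside $P$ --- with $\fpr(x,\Omega)\le (p-1)/(2p-1)$ forces $\#\{\text{$P$-orbits on }\Omega\}\le\tfrac{p}{2p-1}|\Omega|$, sharp for $A_{2p-1}$ on $2p-1$ points, where $|P|=p$ and both inequalities become equalities; likewise $\fpr(x,\Omega)\le 1/(p+1)$ forces $\#\{\text{$P$-orbits on }\Omega\}\le\tfrac{2}{p+1}|\Omega|$. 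To produce such an $x$ when $G$ is not itself generated by its $p$-elements --- which is all that (iii) and (iv) assume --- I would pass to $N=\langle\,p\text{-elements of }G\,\rangle\trianglelefteq G$, so $P\le N$: then $G$ permutes the $N$-orbits on $\Omega$ transitively, all of the same size, and by Sylow conjugacy inside $N$ the $P$-action on each of them is the same, so the orbit count and $|\Omega|$ are both multiplied by the number of $N$-orbits when we pass to one of them, and it suffices to treat a single $N$-orbit; there $N$ (or its faithful image, which is still transitive and generated by $p$-elements) supplies the required $x$ via part~(i), or via part~(ii), whose factor-group hypothesis survives since the subgroup generated by the $p$-elements of $G$ equals $N$. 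For~(iii) the range of $m$ is irrelevant; part~(iv)(a) is the same argument with the bound $1/(p+1)$ in place of $(p-1)/(2p-1)$, and~(iv)(b) follows from~(iv)(a) because, by Cauchy--Frobenius, if every nontrivial element of $P$ had fixed point ratio at least $2/(p+1)$ then $\#\{\text{$P$-orbits on }\Omega\}$ would exceed $\tfrac{2}{p+1}|\Omega|$.

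The hard part will lie entirely in the primitive case: Theorem~\ref{oldA} and Corollary~\ref{corprimitive} rest on the Burness--Guralnick analysis in \cite{bg} and amount to exhibiting, in every primitive permutation group of order divisible by $p$, a $p$-element of small fixed point ratio, which requires a careful passage through the list of exceptions there. Within the reduction above, the two steps needing genuine care are verifying that the exceptional primitive quotients really do descend from the stated hypotheses --- the subtleties with $S_{m}$ versus $A_{m}$ and with field automorphisms, handled using that $G$ is generated by its $p$-elements --- and, in~(iii) and~(iv), ensuring that after reduction to a single $N$-orbit the orbit-counting estimate is applied to the honest Sylow $p$-subgroup.
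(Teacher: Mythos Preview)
Your proposal follows essentially the same route as the paper: reduce parts~(i) and~(ii) to a primitive quotient via a maximal block system (equivalently, a maximal overgroup $M$ of the point stabiliser) and invoke Theorem~\ref{oldA} and Corollary~\ref{corprimitive}; for~(iii) and~(iv) pass to $L=\langle G_p\rangle$, bound the number of $P$-orbits on each $L$-orbit via the orbit-counting estimate applied to a single $p$-element $x\in P$, and derive~(iv)(b) from~(iv)(a) by the Orbit-Counting Lemma. Your bound $\#\{P\text{-orbits}\}\le (|\Omega|+(p-1)|\bC_\Omega(x)|)/p$ is exactly the paper's bound coming from counting $\langle x\rangle$-orbits, and your Frattini-type claim that the $P$-actions on the various $L$-orbits are permutation-isomorphic is correct but more than is needed --- the paper simply bounds each $L$-orbit separately and sums.

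One point deserves care. For~(iv)(a) you invoke part~(ii) on a single $L$-orbit to obtain $\fpr(x,\Omega)\le 1/(p+1)$, but part~(ii) carries the additional hypothesis that $\SL(2,p+1)$ (with $p$ Mersenne) is not a factor group, a hypothesis which~(iv) does \emph{not} impose. The paper's own proof passes over exactly the same point: it asserts, for primitive $G$ generated by $p$-elements with $p>2$, that some order-$p$ element has $\fpr\le 1/(p+1)$ unless $G=A_m$, silently omitting the $\SL(2,p+1)$ exception of Corollary~\ref{corprimitive}(ii). So your argument is no less complete than the paper's here; just be aware that this step, as written in both, does not obviously cover the case where the primitive quotient is $\SL(2,p+1)$ for a Mersenne prime $p>3$.
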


Our second goal is to obtain an asymptotic result.

\begin{thmB}
Let $G$ be a simple transitive permutation group acting on a finite set $\Omega$. Let $p$ be a prime divisor of the size of $G$. There exists a $p$-element $x\in G$  such that $\fpr(x,\Omega)\to 0$ as $|G|\to\infty$. 
\end{thmB}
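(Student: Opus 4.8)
The plan is to reduce to the structure of primitive actions of simple groups and apply the Burness--Guralnick bound together with a counting argument. First I would observe that since $G$ is simple and acts transitively on $\Omega$ with point stabilizer $H$, it suffices to treat the case where the action is \emph{primitive}: indeed, if $H$ is contained in a maximal subgroup $M$, then fixing a point of $G/M$ is a weaker condition than fixing a point of $G/H$, so for every $x\in G$ we have $\fpr(x,G/H)\le \fpr(x,G/M)$ (a fixed point on $\Omega=G/H$ maps to a fixed point on $G/M$ under the natural surjection, and each fibre has the same size). Hence a bound on some $p$-element's fixed point ratio in the primitive action $G/M$ transfers to $\Omega$. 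So from now on $\Omega=G/M$ with $M$ maximal.

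Next I would invoke the trivial-but-crucial averaging identity: for a fixed $p$-element $x$ of order $p$, $\fpr(x,\Omega)=|x^G\cap M|/|x^G|$, and summing over a set of representatives one sees that the \emph{average} of $\fpr(x,\Omega)$ over all elements of order $p$ equals $\sum_i |x_i^G\cap M| / \sum_i |x_i^G|$, which is at most $(\text{number of elements of order }p\text{ in }M)/(\text{number of elements of order }p\text{ in }G)$. Thus it is enough to show that the proportion of order-$p$ elements lying in any maximal subgroup $M$ tends to $0$ as $|G|\to\infty$; equivalently, that $i_p(M)/i_p(G)\to 0$ where $i_p$ counts elements of order $p$. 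This is where the classification of finite simple groups enters. For $G$ of Lie type in characteristic $p$, $M$ is either parabolic or (essentially) of smaller Lie rank/field size, and explicit formulas for the number of unipotent elements give the decay; for $p\ne \Char$, one uses that a proportional lower bound for $i_p(G)$ comes from a maximal torus of order divisible by $p$ while $M$ contributes a negligible share. For alternating groups $G=A_n$ one counts permutations of order $p$ directly (products of $p$-cycles), against $M$ being intransitive, imprimitive, or primitive — the primitive maximal subgroups have order at most $n^{O(\log n)}$ by Praeger--Saxl/Maróti, which is negligible, and the intransitive/imprimitive cases are a routine binomial estimate. Sporadic groups are a finite check.

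Alternatively, and more in the spirit of the paper, I would try to bypass the case division by quoting \cite[Theorem 1]{bg} or its quantitative refinements: for $x$ of prime order $p$ in a primitive group one has $\fpr(x,\Omega)\le |\Omega|^{-c}$ for an absolute constant $c>0$ \emph{outside} an explicit list, and on that list either $|\Omega|$ is bounded (so finitely many $G$) or one exhibits a different $p$-element (e.g. a higher-order $p$-element, or a $p$-element in general position) with small fixed point ratio. Since the statement only asks for \emph{some} $p$-element, we have the freedom to pick $x$ adapted to the geometry: in a subspace action of a classical group, take $x$ a $p$-element moving a generic subspace; in the action of $S_n$ or $A_n$ on $k$-subsets, take $x$ a product of many disjoint $p$-cycles. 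The hard part, and the main obstacle, will be organizing the Lie-type analysis uniformly: controlling $\fpr$ for $p$-elements when $p = \Char$ in parabolic actions — here fixed point ratios can be as large as roughly $1/p$ and do \emph{not} go to zero for a fixed small prime unless one chooses the $p$-element carefully (this is exactly why Theorem A only gives $\le (p-1)/(2p-1)$, a constant), so one must ensure that as $|G|\to\infty$ either the rank grows (forcing many choices of semisimple-free $p$-elements with tiny fixed-point ratio) or $p$ itself is forced to grow. Reconciling these two regimes — bounded rank with $p\to\infty$ versus $p$ fixed with rank $\to\infty$ — and checking the boundary cases (including the exceptional families in \cite[Theorem 1]{bg}) is the technical heart of the argument.
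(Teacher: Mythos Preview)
Your reduction to primitive actions is correct, and your identification of the hard case---subspace actions of classical groups over bounded fields with $p$ equal to the characteristic---is exactly right. Your averaging observation (that the class-size-weighted average of $\fpr$ over elements of order $p$ equals $i_p(M)/i_p(G)$) is a genuinely different route from the paper's. The paper never averages; instead it quotes three off-the-shelf fixed-point-ratio theorems and constructs one explicit $p$-element per group. Specifically: Liebeck--Saxl gives $\fpr(x,\Omega)\le C/\sqrt{q}$ for \emph{every} nontrivial $x$ in a Lie-type group over $\FF_q$, disposing of unbounded $q$; Liebeck--Shalev gives $\fpr(x,\Omega)\le |x^G|^{-\epsilon}$ for every nontrivial $x$ in a non-subspace classical action; and for alternating groups the paper writes down a $p$-element whose cycle lengths are $p$-powers matching the $p$-adic digits of $n$, bounds its centraliser by $n^{c(p)\log n}$, and compares against known bounds on $|M|$. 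Your averaging approach could in principle also handle these regimes, but the paper's route is shorter because the quoted theorems already apply to \emph{all} nontrivial elements, so no averaging is needed.

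The gap in your proposal is the remaining case: classical groups over bounded $q$ in subspace actions. Here neither your averaging sketch nor Burness--Guralnick suffices, and \cite{bg} does not give the $|\Omega|^{-c}$ bound you attribute to it---that is Liebeck--Shalev, and only for non-subspace actions. You correctly note that a transvection has $\fpr\approx 1/p$, so one must pick the $p$-element carefully, but ``a $p$-element moving a generic subspace'' is not yet a construction. The paper's resolution is to invoke Frohardt--Magaard \cite{FM}, which says that if a lift of $x$ has an eigenspace of codimension $d$ then $\fpr(x,\Omega)\to 0$ as $d\to\infty$ in any subspace action; then it builds such an $x$ explicitly by taking the smallest $r$ with $p\mid |\mathrm{Cl}(r,q)|$, choosing a noncentral $p$-element there, and block-diagonally embedding $\lfloor n/r\rfloor$ copies of it into $\mathrm{Cl}(n,q)$, so that the eigenspace codimension grows linearly in $n$. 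Without this ingredient (or an equivalent bound for $i_p(M)/i_p(G)$ in parabolic subgroups, which your sketch asserts but does not prove), the argument does not close.
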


We present two types of applications of Theorems A and B. Let $p$ be a prime. A classical topic in group theory is the study of Sylow numbers. The Sylow number $\nu_{p}(G)$ for a finite group $G$ is defined to be the number of Sylow $p$-subgroups of $G$. For any finite group $G$, we have $\nu_p(G)\equiv1\pmod{p}$ by Sylow's theorem. One of Brauer's problems from his famous list published in 1963 \cite{bra} asks for the possible values of $\nu_p(G)$ where $G$ runs through all the finite groups. Not much is known about this. We have $\nu_p(H)\leq\nu_p(G)$ by Lemma \ref{lessorequal}. It is surprising that not many stronger results are known. Navarro \cite{nav} proved that if $G$ is $p$-solvable, then $\nu_p(H)$ divides $\nu_p(G)$. Using this result, Mar\'oti, Mart\'{\i}nez and Moret\'o \cite{mmm} proved that if $G$ is a  $p$-solvable group generated by $p$-elements and $H<G$ contains a Sylow $p$-subgroup of $G$, then   $\nu_p(H)\leq \nu_p(G)/(p+1)$.   

Using Theorem A, we may obtain a generalization of this bound in the following. 

\begin{thmC}
Let $p$ be a prime. Let $G$ be a finite group generated by $p$-elements. Let $H$ be a proper subgroup of $G$ containing a Sylow $p$-subgroup of $G$. We have 
$$\nu_p(H)\leq\frac{p-1}{2p-1}\nu_p(G).$$ 
Moreover, if $G$ has no factor group isomorphic to a (non-abelian) alternating group $A_{m}$ with $p+1 < m < p^{2}-p$ or to the group $\mathrm{SL}(2,p+1)$ with $p$ a Mersenne prime, then $$\nu_p(H)\leq\frac{\nu_p(G)}{p+1}.$$ 
\end{thmC}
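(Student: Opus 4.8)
The plan is to reduce the statement about Sylow numbers to the fixed-point-ratio bounds of Theorem A by exhibiting the right transitive action. Recall that $\nu_p(G) = |G : \norm{G}{P}|$ where $P \in \syl{p}{G}$, and similarly $\nu_p(H) = |H : \norm{H}{P}|$ once we fix $P \le H$ (which we may, since $H$ contains a Sylow $p$-subgroup of $G$). The natural object to study is the action of $G$ on the coset space $\Omega = G/\norm{G}{P}$, i.e. on the set $\syl{p}{G}$ of Sylow $p$-subgroups of $G$ by conjugation, so that $|\Omega| = \nu_p(G)$. This action is transitive, and since $G$ is generated by its $p$-elements, $G$ acts on $\Omega$ as a transitive group generated by $p$-elements — so Theorem A applies to it (after passing to the permutation group $G/\Ker$, noting that the kernel is a normal subgroup contained in every $\norm{G}{P}$, hence is a $p'$-group, so it does not interfere and the hypothesis on alternating/$\mathrm{SL}_2(p+1)$ factor groups transfers).

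First I would identify the orbits of $P$ on $\Omega = \syl{p}{G}$ with the quantity we want to control. A Sylow $p$-subgroup $Q$ of $G$ is fixed by $P$ (acting by conjugation) exactly when $P \le \norm{G}{Q}$, which forces $P = Q$ since $P$ is then a normal Sylow $p$-subgroup of $\norm{G}{Q}$ and $Q$ is one too; more generally the $P$-orbit sizes on $\Omega$ are powers of $p$, and the number of fixed points is $1$ (just $P$ itself). The key combinatorial step is to relate the number of $P$-orbits on $\Omega$ to $\nu_p(H)$: the $P$-orbits on $G/\norm{G}{P}$ correspond to $(P,\norm{G}{P})$-double cosets in $G$, and one shows $\nu_p(H) = |H : \norm{H}{P}|$ equals exactly the number of those double cosets that lie inside $H$, which is at most the total number of $P$-orbits on $\Omega$. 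Actually the cleaner route, and the one I would take, is: restrict the $G$-action on $\Omega$ to $H$; then $\syl{p}{H} \subseteq \syl{p}{G} = \Omega$ is a single $H$-orbit (all Sylow $p$-subgroups of $H$ are $G$-conjugate since they are $H$-conjugate) of size $\nu_p(H) = |H : \norm{H}{P}|$; and each $P$-orbit inside this $H$-orbit has size a power of $p$ at least... — so we instead bound $\nu_p(H)$ by counting via $P$. Concretely, $\nu_p(H)$ is the size of one $H$-orbit on $\Omega$ containing the $P$-fixed point $P$, so $\nu_p(H) \le (\text{number of }P\text{-orbits on }\Omega)\cdot(\text{largest }p\text{-power orbit})$ is too lossy; instead I use that $\nu_p(H) \le \sum_{Q}\fpr$-type identity, or most directly: by the orbit-counting argument underlying Theorem A(iii)--(iv), the number of $P$-orbits on $\Omega$ is at most $\tfrac{p}{2p-1}|\Omega|$ (resp. $\tfrac{2}{p+1}|\Omega|$), and $\nu_p(H)$ divides into this because the $H$-orbit of $P$ meets each $P$-orbit in a set of size dividing... — the honest statement is that $\nu_p(H)$ is itself an orbit length, and $\nu_p(H) \mid \nu_p(G)$ fails in general, so the bound must come from the fixed-point-ratio side directly.

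The route that actually works, and which I would carry out, is via Theorem A(i)--(ii) and a counting identity for $\nu_p(H)$. Since $G$ is generated by $p$-elements, Theorem A(i) gives a $p$-element $x \in G$ with $\fpr(x,\Omega) \le \tfrac{p-1}{2p-1}$; under the extra hypothesis, Theorem A(ii) gives $\fpr(x,\Omega) \le \tfrac{1}{p+1}$. Now $\fpr(x,\Omega) = |\bC_{\Omega}(x)|/|\Omega|$, and $\bC_{\Omega}(x)$ is the set of Sylow $p$-subgroups $Q$ of $G$ with $x \in \norm{G}{Q}$. Up to replacing $x$ by a conjugate we may assume $x \in P \le H$, so $P \in \bC_{\Omega}(x)$. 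The crucial point is that $\bC_{\Omega}(x)$, the set of $G$-Sylow $p$-subgroups normalized by $x$, is a union of orbits under $\cent{G}{x}$ (indeed under $\norm{G}{\langle x\rangle}$), and one shows that the $H$-orbit of $P$ on $\Omega$ — which has size exactly $\nu_p(H)$ — is \emph{contained in} $\bC_{\Omega}(x)$ when $x$ is chosen inside every $H$-conjugate... no: rather, $H \cdot P = \syl{p}{H}$ and every member of $\syl{p}{H}$ is normalized by a conjugate of $x$ but not necessarily by $x$ itself. So the correct statement is $\nu_p(H) = |\syl{p}{H}| = |H\text{-orbit of }P| \le |\bigcup_{h \in H} \bC_\Omega(x^h)|$, but since all $\bC_\Omega(x^h)$ have the same size $\fpr(x,\Omega)|\Omega|$ and $\syl{p}{H}$ is a single $H$-orbit, a transitivity/averaging argument gives $\nu_p(H) = |\syl{p}{H}| \le \fpr(x,\Omega)\cdot|\syl{p}{G}| = \fpr(x,\Omega)\cdot\nu_p(G)$: indeed every $Q \in \syl{p}{H}$ is normalized by some $H$-conjugate of $x$, so $Q$ lies in $\bC_\Omega(x)$ after translating, and counting with multiplicity along the transitive $H$-action on $\syl{p}{H}$ the average number of elements of $\syl{p}{H}$ in $\bC_\Omega(x)$ equals $|\syl{p}{H}| \cdot \fpr(x,\Omega)|\Omega| / |\Omega| $... — this is exactly the identity $|x^G \cap \norm{G}{P}|/|x^G| = \fpr(x,\Omega)$ from the excerpt, applied inside $H$, combined with $\nu_p(H) = |H:\norm{H}{P}|$.

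So the clean skeleton is: (1) Set $\Omega = \syl{p}{G}$, a transitive $G$-set of size $\nu_p(G)$, on which $G$ acts as a transitive permutation group generated by $p$-elements (quotient by the $p'$-kernel $\bigcap_Q \norm{G}{Q}$, which preserves the factor-group hypothesis). (2) Apply Theorem A(i) (resp. (ii)) to obtain a $p$-element $x$ with $\fpr(x,\Omega) \le \tfrac{p-1}{2p-1}$ (resp. $\le \tfrac{1}{p+1}$), and replace $x$ by a conjugate lying in the fixed $P \le H$. (3) Use $\fpr(x,\Omega) = |x^G \cap \norm{G}{P}|/|x^G|$ together with the observation that $\syl{p}{H}$ is a transitive $H$-set of size $\nu_p(H)$ all of whose members are normalized by an $H$-conjugate of $x$, to derive $\nu_p(H) \le \fpr(x,\Omega)\cdot\nu_p(G)$. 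The main obstacle is step (3): making the passage from ``$\fpr(x,\Omega)$ is small on the full $G$-set'' to ``$\nu_p(H)$ is small'' rigorous, since $H$-conjugacy of $x$ is coarser than $G$-conjugacy and $\bC_\Omega(x)$ need not contain all of $\syl{p}{H}$; the fix is to count fixed points of the \emph{whole} $G$-conjugacy class meeting $\norm{H}{P}$, or equivalently to run the fixed-point-ratio argument for the $H$-action on the orbit $\syl{p}{H}$ and compare it termwise to the $G$-action, which is precisely the kind of bookkeeping the proof of Theorem A is built to support. I expect this is where one must be careful, but no new idea beyond Theorem A and the standard Sylow-counting dictionary ($\nu_p = $ index of normalizer, all Sylows conjugate, $p'$-kernel) is needed.
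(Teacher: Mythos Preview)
Your step (3) is a genuine gap, and your choice of $\Omega = \Syl_p(G)$ is the source of the trouble. The inequality you are trying to extract, $\nu_p(H) \le \fpr(x,\Omega)\cdot\nu_p(G) = |\bC_\Omega(x)|$, is simply false in general. Take $G = A_5$, $p = 3$, $H = A_4$, and let $x$ be a $3$-cycle generating $P$. Then $\nu_3(H) = 4$ and $\nu_3(G) = 10$, but $x$ normalizes exactly one Sylow $3$-subgroup of $A_5$ (namely $\langle x\rangle$ itself, since the ten Sylow $3$-subgroups correspond to the ten $3$-element subsets of $\{1,\dots,5\}$ and a $3$-cycle preserves only its own support). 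Thus $|\bC_\Omega(x)| = 1 < 4$. No averaging over $H$-conjugates of $x$ repairs this: conjugating $x$ inside $H$ only translates the set $\bC_\Omega(x)$ without changing its size, and $\Syl_p(H)$ is never contained in the fixed set of a single $p$-element unless that element lies in $\bO_p(H)$, which Theorem~A does not provide.

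The paper's remedy is to change the permutation set. First reduce to $H$ maximal (using only the monotonicity $\nu_p(H)\le\nu_p(M)$ for $H\le M$). Since $G$ is generated by $p$-elements and $H$ contains a full Sylow $p$-subgroup, $H$ cannot be normal, hence is self-normalizing; take $\Omega$ to be the set of $G$-conjugates of $H$, on which $G$ acts primitively with $|\Omega|=|G:H|$. The key identity is then
\[
\frac{\nu_p(H)}{\nu_p(G)} \;=\; \fpr(P,\Omega),
\]
proved by double-counting pairs $(Q,L)$ with $L$ a conjugate of $H$ and $Q\in\Syl_p(L)$: on one hand there are $|G:H|\cdot\nu_p(H)$ such pairs, on the other $\nu_p(G)\cdot|\bC_\Omega(P)|$. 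With this exact equality in hand the rest is immediate: for every $x\in P$ one has $\fpr(P,\Omega)\le\fpr(x,\Omega)$, and Theorem~A furnishes a $p$-element (which after a $G$-conjugation lies in $P$) with $\fpr(x,\Omega)\le(p-1)/(2p-1)$, respectively $\le 1/(p+1)$ under the factor-group hypothesis. So the ``bookkeeping'' you anticipated collapses to a one-line identity, but only after picking the right $\Omega$: conjugates of $H$, not Sylow subgroups of $G$.
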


The first inequality in Theorem C is sharp. For if $p>2$, $H = A_{2p-2}$ and $G = A_{2p-1}$, then $\nu_p(H) = ((p-1)/(2p-1)) \nu_p(G)$. Equality also holds for $p=2$ when $G=S_3$ and $H$ has order $2$.

If $p>3$ is a Mersenne prime, $G=\SL(2,p+1)$ and $H$ is a Borel subgroup of $G$, then $\nu_p(H)=2\nu_p(G)/(p+2)$.  Note that the hypothesis in Theorem C that $H$ contains a Sylow $p$-subgroup of $G$ is definitely necessary. Consider for instance any Frobenius group $G$ with complement of order $p^2$ and let $H$ be a subgroup of index $p$. In this case, $\nu_p(H)=\nu_p(G)$. This example can be generalized. Let $P$ be a $p$-group acting on a $p'$-group $V$ and let $P_0$ be a subgroup of $P$ such that $C_V(P)=C_V(P_0)$. Put $G=VP$ and $H=VP_0$. Then we have $\nu_p(G)=\nu_p(H)$. We characterize the pairs $(G,H)$ where equality holds in Lemma \ref{lessorequal}. However, the following could be true.

\begin{conD} 
For each odd prime $p$, there exists a rational number $f(p)$ less than $1$ such that whenever $G$ is a finite group and $H$ is a subgroup of $G$ with $\nu_p(H)<\nu_p(G)$, then 
$$\nu_p(H)\leq f(p) \cdot \nu_p(G).$$
\end{conD}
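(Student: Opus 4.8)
We do not settle Conjecture D here; the following is a plan of attack. The idea is to reduce to a configuration in which Theorem C applies and to isolate the one case that escapes such a reduction. First I would reduce to groups generated by $p$-elements. Let $G_0$ be the (normal) subgroup generated by the $p$-elements of $G$ and $H_0$ the analogous subgroup of $H$; then $H_0\le G_0$, the quotients $G/G_0$ and $H/H_0$ are $p'$-groups, and every Sylow $p$-subgroup of $G$ (resp.\ of $H$) lies in $G_0$ (resp.\ in $H_0$). A Frattini argument applied to a Sylow $p$-subgroup gives $\nu_p(G)=\nu_p(G_0)$ and $\nu_p(H)=\nu_p(H_0)$, so $\nu_p(H_0)<\nu_p(G_0)$ and in particular $H_0<G_0$. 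Hence we may assume from the outset that $G$ and $H$ are generated by their $p$-elements and that $H<G$; fix $Q\in\Syl_p(H)$ and $P\in\Syl_p(G)$ with $Q\le P$.

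If $Q=P$, that is, if $H$ contains a Sylow $p$-subgroup of $G$, then Theorem C applies verbatim and yields $\nu_p(H)\le\frac{p-1}{2p-1}\nu_p(G)$, with the stronger bound $\nu_p(H)\le\nu_p(G)/(p+1)$ unless $G$ has an alternating or $\SL(2,p+1)$ quotient of the type excluded there. So for all such pairs one may take $f(p)=\frac{p-1}{2p-1}<1$, and no new idea is needed.

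The remaining case $Q<P$ is the substantive one. Now $H$ contains no Sylow $p$-subgroup of $G$, so one cannot feed $(G,H)$ --- nor any overgroup $\langle H,P\rangle$, whose $p$-part strictly exceeds $|Q|$ --- into Theorem C. I would argue by induction on $|G|$ on a minimal counterexample: using the divisibility of Sylow numbers for the $p$-solvable sections (Navarro) together with Frattini arguments for non-abelian chief factors, one reduces to $G$ being (almost) simple, and there invokes the classification together with the fixed point ratio estimates of Burness--Guralnick, Theorem A, Corollary \ref{corprimitive}, and the asymptotic Theorem B. A concrete first handle is the following: $\nu_p(H)=|H:H\cap N_G(Q)|$ is the size of the conjugation orbit $Q^H$, so $\nu_p(H)\le|Q^G|=|G:N_G(Q)|$, and a double count of the incidences $Q'\le P'$ with $Q'\in Q^G$ and $P'\in\Syl_p(G)$ gives
$$
\nu_p(H)\ \le\ |Q^G|\ =\ \frac{m}{n_Q}\,\nu_p(G),
$$
where $n_Q$ is the number of Sylow $p$-subgroups of $G$ containing a given conjugate of $Q$ and $m$ is the number of conjugates of $Q$ inside a given Sylow $p$-subgroup. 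One is then reduced to showing that $m/n_Q$ is bounded away from $1$ whenever $\nu_p(H)$ is close to $\nu_p(G)$ --- i.e.\ that a subgroup $Q$ forcing $\nu_p(H)$ near $\nu_p(G)$ must behave almost like a Sylow subgroup --- which one hopes to extract from the structure of the normalizers $N_G(Q)$ in almost simple groups.

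The main obstacle lies exactly here. The crude inequality $\nu_p(H)\le(m/n_Q)\nu_p(G)$ is by itself insufficient: $m\ge n_Q$ can happen (for $p=3$, $G=A_6$ and $Q$ a $3$-cycle one gets $m/n_Q=2$), and $HN_G(Q)=G$ makes it vacuous. What is genuinely needed is a sharp estimate for the fixed point ratio behaviour of $G$ on the set $Q^G$ of its \emph{non-Sylow} $p$-subgroups --- an action that need not be primitive and is therefore not covered directly by the Burness--Guralnick bounds. Establishing such estimates, rather than any of the reductions in the first two steps, is the crux, and is presumably why the statement is recorded here only as a conjecture.
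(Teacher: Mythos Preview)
Neither you nor the paper proves Conjecture D; the paper's contribution is Theorem \ref{thm:minimalcounterConjD}, which pins down the structure of a minimal counterexample $(G,H)$: $H$ is maximal and core-free in $G$, there is a unique minimal normal subgroup $A\cong S^{t}$ with $S$ non-abelian simple of order divisible by $p$ and $t$ a power of $p$, and $G=AQ$ with $Q\in\Syl_p(H)$. Your first reduction (replacing $(G,H)$ by the subgroups generated by $p$-elements) is valid and preserves the Sylow ratio, but the paper does not argue this way: it works with a minimal counterexample from the start and obtains $G=AQ$ (hence $G=\langle G_p\rangle$) as a \emph{conclusion}, via Hall's formula $\nu_p(G)=\nu_p(G/N)\nu_p(PN)$ (Lemma \ref{SylowNorm}) together with several elementary bounds forcing $H$ core-free and $G=PA$.

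Your disposal of the case $Q=P$ by Theorem C is correct and is already observed in the paper immediately after the statement of Conjecture D. Your claimed reduction in the case $Q<P$ to $G$ almost simple, however, is too optimistic: the paper's minimal-counterexample analysis only reaches $G=AQ$ with $A=S^{t}$ and $Q$ permuting the $t$ factors transitively. This is almost simple only when $t=1$, equivalently when $G$ has no section isomorphic to $C_p\wr C_p$; the paper explicitly flags the passage from $G=AQ$ to the almost simple case as requiring different techniques (see the Remark following Theorem \ref{thm:minimalcounterConjD}). Your outline does not indicate how the wreath-type configurations with $t>1$ would be handled.

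Your double-counting inequality $\nu_p(H)\le |Q^G|=(m/n_Q)\,\nu_p(G)$ is correct but is not the paper's tool; the paper uses instead the multiplicativity of $\nu_p$ along normal subgroups and the observation that if $Q$ lies in two distinct Sylow $p$-subgroups of $G$ then $\nu_p(H)\le\nu_p(G)/2$ (Step 3 of the proof of Theorem \ref{thm:minimalcounterConjD}). Both routes leave the genuinely hard part --- bounding the ratio inside a single almost simple configuration when $Q$ is not Sylow --- untouched, which is why the paper records the statement as a conjecture.
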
 

It follows from Theorem C that Conjecture D holds when $H$ contains a Sylow $p$-subgroup of $G$, even when $p=2$. Fixed point ratios do not seem enough to handle the general case. In fact,  the example $(G,H) = (\mathrm{SL}(2,2^{k}), D_{2(2^{k}-1)})$ shows that the hypothesis that $p$ is odd is necessary. Note that in this case $\nu_2(G)=2^{k}+1$ and $\nu_2(H)=2^{k}-1$. 
We describe the structure of a minimal counterexample to this conjecture in Theorem \ref{thm:minimalcounterConjD} below. 
We refer the reader to Section 6 for a discussion on possible values of $f(p)$ in Conjecture D. For the moment, we just mention that we believe that the conjecture should hold with $f(p)=1/2$ if $p>3$ but that the best possible bound for $p=3$ is $f(3)=4/7$.
 
Next, we consider an asymptotic version of Theorem C.
 
 \begin{thmE}
 Let $p$ be a prime.
 If $\mathcal{S}=\{(G,H)\}$ is a sequence where $G$ is simple of order divisible by $p$ and $H<G$ contains a Sylow $p$-subgroup of $G$, then $\nu_p(H)/\nu_p(G)\to0$ when $(G,H)\in\mathcal{S}$ and $|G|\to\infty$. 
 \end{thmE}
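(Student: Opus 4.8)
The plan is to deduce the statement from Theorem B via the elementary identity, already underlying the proof of Theorem C, that for the action of $G$ on $\Omega = G/H$ the quotient $\nu_p(H)/\nu_p(G)$ equals the fixed point ratio on $\Omega$ of a full Sylow $p$-subgroup of $H$. So fix $(G,H)\in\mathcal{S}$ and let $G$ act on $\Omega = G/H$ by left translation. Since $G$ is simple and $H$ is a proper subgroup, the kernel of this action (the normal core of $H$) is trivial, so $G$ is a simple transitive permutation group on $\Omega$; as $p$ divides $|G|$, Theorem B applies to this action. Fix $Q\in\Syl_p(H)$; because $H$ contains a Sylow $p$-subgroup of $G$, $Q$ is also a Sylow $p$-subgroup of $G$.

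Next I would establish
\[
\frac{\nu_p(H)}{\nu_p(G)} = \frac{|\bC_\Omega(Q)|}{|\Omega|}, \qquad \bC_\Omega(Q):=\bigcap_{x\in Q}\bC_\Omega(x).
\]
A coset $gH$ is fixed by every element of $Q$ exactly when $g^{-1}Qg\le H$, and since $g^{-1}Qg$ is a Sylow $p$-subgroup of $G$, this happens precisely when $g^{-1}Qg$ is one of the $\nu_p(H)$ Sylow $p$-subgroups of $H$. The set of $g\in G$ with $g^{-1}Qg\le H$ is a union of left cosets of $H$, and for each Sylow $p$-subgroup $S$ of $H$ there are exactly $|\norm{G}{Q}|$ elements $g$ with $g^{-1}Qg=S$; hence $|\bC_\Omega(Q)| = \nu_p(H)\,|\norm{G}{Q}|/|H|$. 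Dividing by $|\Omega| = |G|/|H|$ and using $\nu_p(G)=|G:\norm{G}{Q}|$ yields the identity.

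To finish, note that $\bC_\Omega(Q)\subseteq\bC_\Omega(x)$ for every $x\in Q$, so $\nu_p(H)/\nu_p(G)\le\fpr(x,\Omega)$ for each such $x$; since $\fpr(\,\cdot\,,\Omega)$ is constant on $G$-conjugacy classes and every $p$-element of $G$ is $G$-conjugate into the Sylow $p$-subgroup $Q$, this inequality holds for \emph{every} $p$-element $x$ of $G$. By Theorem B there is a $p$-element $x\in G$ with $\fpr(x,\Omega)\to 0$ as $|G|\to\infty$, and hence $\nu_p(H)/\nu_p(G)\to 0$ as $(G,H)\in\mathcal{S}$ with $|G|\to\infty$, as required.

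I do not expect a genuine obstacle here: the theorem is essentially a repackaging of Theorem B through the displayed identity. The only points that require care are checking that the action on $\Omega$ is faithful, so that $G$ really is a permutation group and Theorem B is applicable, and the counting behind the identity; both are routine.
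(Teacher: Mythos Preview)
Your proof is correct and follows essentially the same approach as the paper: establish the identity $\nu_p(H)/\nu_p(G)=\fpr(Q,\Omega)$ for $\Omega=G/H$, bound this above by $\fpr(x,\Omega)$ for any $p$-element $x$, and apply Theorem~B. The only cosmetic difference is that the paper first reduces to the case where $H$ is maximal (via $\nu_p(H)\le\nu_p(L)$ for $H\le L$) before invoking the identity, whereas you prove and use the identity directly for arbitrary $H$.
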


 The above example $(G,H) = (\mathrm{SL}(2,2^k), D_{2(2^k-1)})$ shows that in Theorem E the hypothesis that $H$ contains a Sylow $p$-subgroup is necessary, when $p=2$. 
 
 Our second type of application of the fixed point ratio results concerns covering the set $G_p$ of $p$-elements of a finite group $G$. The problem of covering a finite group by proper subgroups has a long history that dates back to at least 1926 \cite{sco}. The problem of finding the smallest number of proper subgroups  of a finite group $G$ (generated by  $p$-elements) that are necessary to cover $G_p$ was introduced in \cite{mmm}. Let $\sigma_p(G)$ be this number. Using the $p$-solvable case of Theorem C, it was shown in \cite[Theorem B]{mmm} that for non-cyclic $p$-solvable groups we have $\sigma_p(G)\geq p+1$. Clearly, this bound is best possible, as any $2$-generated non-cyclic finite $p$-group shows. We extend this result to arbitrary groups.
 
 \begin{thmF}
 If $p$ is a prime and $G$ is a finite group generated by its $p$-elements, then $\sigma_p(G)\geq p+1$. 
 \end{thmF}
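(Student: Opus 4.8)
The plan is to argue by contradiction. Suppose $\sigma_p(G)=k\le p$ and fix a cover $G_p=H_1\cup\dots\cup H_k$ by proper subgroups with $k$ minimal; replacing each $H_i$ by a maximal subgroup containing it, we may assume each $H_i$ is maximal, and we may assume $G$ is not a $p$-group (if it were, $G_p=G$ is either a non-cyclic $p$-group, needing exactly $p+1$ proper subgroups, or a cyclic $p$-group, which cannot be covered at all, so the conclusion holds). The first step is to show that \emph{every} Sylow $p$-subgroup $P$ of $G$ lies in some $H_i$. Indeed $P=P\cap G_p=\bigcup_{i=1}^k(P\cap H_i)$, and if $P\not\le H_i$ for all $i$ then this exhibits the $p$-group $P$ as a union of at most $p$ proper subgroups, which is impossible, since a $p$-group is a union of proper subgroups only when it is non-cyclic and then at least $p+1$ are needed; and if $P$ is cyclic its generator is a $p$-element, hence lies in some $H_i$, forcing $P\le H_i$. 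Hence $G_p\subseteq\bigcup\{H_i : H_i\text{ contains a Sylow }p\text{-subgroup of }G\}$, so by minimality of $k$ every $H_i$ contains a Sylow $p$-subgroup of $G$; in particular $|H_i|_p=|G|_p$, so the Sylow $p$-subgroups of $H_i$ are precisely those of $G$ lying inside $H_i$, and each Sylow $p$-subgroup of $G$ lies in at least one $H_i$. Therefore
\[
\nu_p(G)\le\sum_{i=1}^k\nu_p(H_i).
\]

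Next I would invoke Theorem C. Each $H_i$ is a proper subgroup containing a Sylow $p$-subgroup of $G$, and $G$ is generated by its $p$-elements, so if $G$ has no factor group isomorphic to $A_m$ with $p+1<m<p^2-p$ or to $\SL(2,p+1)$ with $p$ a Mersenne prime, then $\nu_p(H_i)\le\nu_p(G)/(p+1)$ for all $i$, and hence
\[
\nu_p(G)\le\sum_{i=1}^k\nu_p(H_i)\le\frac{k}{p+1}\,\nu_p(G)\le\frac{p}{p+1}\,\nu_p(G)<\nu_p(G),
\]
a contradiction (note $\nu_p(G)\ge 1$, indeed $\nu_p(G)\equiv 1\ (\mathrm{mod}\ p)$ and $\nu_p(G)>1$ since $G$ is not a $p$-group). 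This already proves $\sigma_p(G)\ge p+1$ in the absence of such factor groups.

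The remaining case, which I expect to be the main obstacle, is that $G$ has a factor group isomorphic to $A_m$ with $p+1<m<p^2-p$, or to $\SL(2,p+1)$ with $p$ Mersenne: then Theorem C only gives $\nu_p(H_i)\le\frac{p-1}{2p-1}\nu_p(G)$, and the displayed counting yields merely $k\ge\lceil(2p-1)/(p-1)\rceil=3$, which is far from enough. Here I would pass to a minimal counterexample $G$ and use that $\sigma_p$ does not increase under quotients — if $(G/N)_p$ is covered by proper subgroups, their preimages cover $G_p$, because the image of a $p$-element is a $p$-element — so that every proper quotient of $G$ already satisfies the theorem. Combining this with the $p$-solvable case of Theorem F (from \cite{mmm}) and standard reductions (controlling $\oh{p'}{G}$ and $F^*(G)$), one should reduce to the situation where $G$ is almost simple with socle $A_m$ ($p+1<m<p^2-p$) or $\PSL(2,2^n)$ (with $2^n=p+1$), or is a split extension of an elementary abelian group by such a group.

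For the almost simple cases the argument is then hands-on, using the explicit subgroup structure. When $p+1<m<p^2-p$, the $p$-elements of $A_m$ are products of at most $p-2$ disjoint $p$-cycles, the subgroups carrying an unusually large proportion of the Sylow $p$-subgroups are, up to conjugacy, stabilizers of points or of small subsets, and one checks combinatorially that covering all Sylow $p$-subgroups of $A_m$ requires at least $p+1$ subgroups (already covering all single $p$-cycles by point stabilizers forces this, since the supports are $p$-subsets). For $\PSL(2,2^n)$, the maximal subgroups containing a Sylow $p$-subgroup are the $p+2$ Borel subgroups, each containing $p+1$ Sylow $p$-subgroups, together with the Sylow normalizers $D_{2p}$, each containing just one; the incidence of the Sylow tori with the Borels is exactly that of the edges and vertices of the complete graph $K_{p+2}$, whose minimum vertex cover has size $p+1$, and since a Sylow normalizer removes only one edge, no cover of the Sylow $p$-subgroups uses fewer than $p+1$ subgroups. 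Finally, the split-extension case is reduced to a statement about the action of the relevant almost simple group on a module, again together with the $p$-solvable results. This would complete the proof.
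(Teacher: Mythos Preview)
Your opening strategy is correct: reducing to maximal $H_i$, showing each $H_i$ must contain a Sylow $p$-subgroup of $G$, and deducing $\nu_p(G)\le\sum_i\nu_p(H_i)$ all go through, and together with the bound $\nu_p(H_i)\le\nu_p(G)/(p+1)$ from Theorem~C this does settle the case where $G$ has no forbidden factor group. Note, however, that this already invokes Theorem~C and hence the Burness--Guralnick fixed point ratio theorem and the classification of finite simple groups, whereas the paper proves Theorem~F by a short elementary argument using neither.

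There is a genuine gap in your treatment of the ``bad case''. The inequality $\sigma_p(G)\le\sigma_p(G/N)$ points the wrong way for a minimal-counterexample reduction: if $G$ is minimal with $\sigma_p(G)\le p$ and $G/N\cong A_m$ is a \emph{proper} quotient, then minimality yields $\sigma_p(G/N)\ge p+1$, which is entirely compatible with $\sigma_p(G)\le p$ and produces no contradiction. You therefore cannot conclude $N=1$, and the problem does not reduce to $G$ almost simple (or to a split extension as you suggest). Your sketch for $A_m$ itself is also off: using only point stabilizers, covering all $p$-cycles requires a set of points meeting every $p$-subset, hence of size $m-p+1$, which is at most $p$ for every $m\le 2p-1$; and nothing you wrote rules out other maximal overgroups of a Sylow $p$-subgroup. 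The paper's argument avoids all of this. Among proper subgroups containing a Sylow $p$-subgroup it selects $H=\langle H_p\rangle$ with $|H_p|$ maximal, shows that $H$ lies in a \emph{unique} maximal subgroup $M=\norm G H$, and then uses induction on $|G|$ (applied to $H$) to force any covering of $G_p$ by at most $p$ maximal subgroups to contain every conjugate of $M$. Since $P\le M$ gives $p\nmid|G:M|$ while $G=\langle G_p\rangle$ rules out $|G:M|<p$, there are at least $p+1$ such conjugates, a contradiction.
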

 
This result, which was conjectured in an earlier version of \cite{mmm},  is elementary. Finally, we prove an asymptotic result that relies on Theorem B.

\begin{thmG}
Let $p$ be a prime. Then $\sigma_p(G)\to\infty$ when $G$ is simple of order divisible by $p$ and $|G|\to \infty$.
\end{thmG}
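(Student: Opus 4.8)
The plan is to deduce Theorem G from Theorem B together with a standard counting argument relating $\sigma_p(G)$ to fixed point ratios. Suppose $G$ is a finite simple group of order divisible by $p$, and let $x_1, \dots, x_k$ be elements whose proper subgroups $H_1 = \langle x_1 \rangle^G$-cores... more precisely, suppose $M_1, \dots, M_k$ are proper subgroups of $G$ whose union contains $G_p$, and $k = \sigma_p(G)$. Since $G$ is simple, each $M_i$ is contained in a maximal subgroup, and we may as well assume each $M_i$ is maximal, so $G$ acts primitively on $\Omega_i = G/M_i$. First I would observe that every $p$-element of $G$ lies in some conjugate of some $M_i$, so for any $p$-element $x$ we have $\sum_{i=1}^k |\{gM_i : x \in gM_ig^{-1}\}| / |\Omega_i| \geq 1$ on average; concretely, $\sum_{i=1}^{k} \fpr(x, \Omega_i) \geq 1$ for a suitably chosen $p$-element, because $x$ fixes a point of $\Omega_i$ precisely when a conjugate of $x$ lies in $M_i$, and the number of conjugacy classes of $p$-elements all covered forces one class to have $\sum_i \fpr(x,\Omega_i)\geq 1$ by averaging over the (boundedly many, or all) $p$-element classes. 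Actually the cleanest route: pick the $p$-element $x$ provided by Theorem B applied to each action $\Omega_i$—but we need a single $x$—so instead I would argue that since $G_p \subseteq \bigcup_i \bigcup_g gM_ig^{-1}$, counting pairs $(x, gM_i)$ with $x \in G_p$ and $x \in gM_i g^{-1}$ gives $|G_p| \le \sum_{i=1}^k |G_p \cap M_i| \cdot |G:M_i| / |M_i| \cdot (\text{multiplicity})$; more carefully, $|G_p| \leq \sum_{i=1}^{k} |\Omega_i| \cdot \max_{x} \fpr(x^{-1}\text{-type})$...

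Let me restate the key step cleanly. For each $i$, and each $p$-element $x$, the number of conjugates of $M_i$ containing $x$ equals $\fpr(x, \Omega_i) \cdot |\Omega_i| \cdot |M_i|/|G| \cdot$ (correction factor) — the standard identity is that the number of conjugates of $M_i$ containing $x$ is $|x^G \cap M_i|/|x^G| \cdot |G:M_i| = \fpr(x,\Omega_i) \cdot |G:M_i|$. Summing over a transversal and using that the conjugates of the $M_i$ cover $G_p$: for every $p$-element $x$, $\sum_{i=1}^{k} \fpr(x, \Omega_i) \geq 1/|G:N_G(P)|$-type bound — no, simply $\sum_{i=1}^k \fpr(x,\Omega_i)\cdot|G:M_i| \geq 1$ is false dimensionally. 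The correct averaging: $\sum_{i} \sum_{x \in G_p} \fpr(x,\Omega_i)|x^G| \geq |G_p|$ since each $p$-element is covered, whence $\sum_i \mathbb{E}[\fpr(x,\Omega_i)] \geq 1$ where the expectation is over $x$ chosen uniformly from $G_p$. Therefore $\max_i \mathbb{E}_x[\fpr(x,\Omega_i)] \geq 1/k = 1/\sigma_p(G)$. Then I would want: for the index $i$ achieving this, the average of $\fpr(x, \Omega_i)$ over $p$-elements is small when $|G|$ is large. This is where the work lies: Theorem B gives a single good $p$-element per action, not an average bound. The main obstacle is therefore upgrading Theorem B (existence of one $p$-element with small fixed point ratio) to control of the average fixed point ratio of $p$-elements, or equivalently bounding $\sigma_p(G)$ directly. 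I expect one resolves this by the same case analysis underlying Theorem B: for groups of Lie type the fixed point ratio of a regular unipotent (or a well-chosen semisimple $p$-element) is small for essentially all primitive actions simultaneously, and for alternating groups a $p$-cycle works across all primitive actions; combined with the bounded number of classes of maximal subgroups this forces $\sigma_p(G)\to\infty$.

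Concretely the steps are: (1) reduce to the case where the covering subgroups are maximal, using simplicity of $G$; (2) set up the counting identity $\sum_{i=1}^{\sigma_p(G)} \fpr(x,\Omega_i) \geq 1$ for a $p$-element $x$ lying in the "most-covered" class, obtained by averaging the covering inequality $|G_p| \leq \sum_i |M_i \cap G_p| \cdot |G:M_i|$ over the conjugation action (equivalently, over all $p$-element classes weighted by size); (3) invoke the Burness–Guralnick bound $\fpr(x,\Omega) \leq 1/(p+1)$ with its short list of exceptions, or rather the quantitative refinements behind Theorem B, to conclude that for each primitive action the fixed point ratio of the chosen class of $p$-elements is $O(|G|^{-c})$ except in boundedly-described families; (4) handle the exceptional families (alternating groups in their natural-ish actions, low-rank Lie type groups) separately, where one exhibits an explicit $p$-element with fixed point ratio tending to $0$; (5) conclude $\sigma_p(G) \geq 1/\max_i \fpr(x,\Omega_i) \to \infty$. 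The main obstacle is step (3)–(4): passing from "there exists a good $p$-element for each action" to a bound uniform enough across the (boundedly many conjugacy classes of) maximal subgroups to force the sum $\sum_i \fpr(x,\Omega_i)$ to be small for a common $x$; I anticipate this uses the classification of maximal subgroups and the detailed fixed-point-ratio estimates of Burness and Guralnick rather than the clean statement of their main theorem alone.
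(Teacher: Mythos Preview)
Your approach is essentially the paper's, but you have tied yourself in knots over a non-issue and missed a simplification.

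First, the averaging detour is unnecessary. Since $G_p$ is closed under conjugation, the entire class $x^G$ is contained in $G_p \subseteq \bigcup_{i=1}^{k} M_i$ for \emph{every} $p$-element $x$. Hence $|x^G| \leq \sum_{i=1}^{k} |x^G \cap M_i|$, i.e.\ $\sum_{i=1}^{k}\fpr(x,\Omega_i) \geq 1$ holds for \emph{all} $p$-elements $x$, not just for some ``most-covered'' class obtained by averaging. No weighted sums or expectations are needed.

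Second, what you flag as the ``main obstacle'' --- passing from one good $x$ per action to a single $x$ good for all actions --- is not an obstacle at all: it is exactly what the \emph{proof} of Theorem~B (Section~4) already delivers. In each of Propositions~\ref{palternating}, \ref{pexceptional}, \ref{pnonsubspace}, \ref{psubspace} the element $x$ is constructed from $G$ alone (the explicit cycle-type element for $A_n$, any nontrivial element via the Liebeck--Saxl bound for Lie type over growing $q$, the subspace-action element for classical groups over bounded $q$), and the conclusion $\fpr(x,\Omega)\to 0$ is asserted uniformly over all primitive actions $\Omega$. The paper's proof of Theorem~G simply quotes this: for that single $x$, each maximal $H$ covers a proportion $\fpr(x,G/H)$ of $x^G$ tending to $0$, so $\sigma_p(G) \geq 1/\max_H \fpr(x,G/H) \to \infty$. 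Your steps (3)--(4), which you anticipate as requiring a fresh case analysis, are therefore already done; the only thing to add to your write-up is the observation that Theorem~B (as proved) gives a uniform $x$, after which your step~(5) finishes immediately.
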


In fact, we will prove that the number of proper subgroups that are necessary to cover some conjugacy class of $p$-elements goes to infinity.
We will also show that there are counterexamples when $G$ is not simple. 

In Section \ref{graph}, we will discuss some applications of Theorem G, and therefore of our fixed point ratio results, to noncommuting sets and to the existence of abelian Hall $\pi$-subgroups. This will also be related with the commuting probability of $p$-elements studied in \cite{bgmn} and with a theorem of Tur\'an \cite{T} on graph theory. For instance, we will prove the following local version of Theorem 1.1 of \cite{aahz06}. If $\pi$ is a set of primes, we will say that $G\in C(m,n)$ if for every $S_1,S_2\subseteq G_{\pi}$ with $|S_1|=m, |S_2|=n$, there exist $x\in S_1, y\in S_2$ such that $xy=yx$. 

\begin{thmH}
There exists a real-valued function $g(m,n)$ such that if $\pi$ is a set of primes, $G\in C_{\pi}(m,n)$ and $|G|_{\pi}>g(m,n)$ then $G$ possesses abelian Hall $\pi$-subgroups.
\end{thmH}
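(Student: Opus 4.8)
Write $c=m+n-1$ and $N=\max\{m,n\}$. The plan is to reduce to a group generated by its $\pi$-elements, record the elementary structural consequences of the hypothesis, use Theorem G to force $\pi$-separability once $|G|_{\pi}$ is large, and then conclude via the theory of Hall subgroups together with a coset argument. We may first replace $G$ by $\langle G_{\pi}\rangle$: this changes neither $G_{\pi}$ nor $|G|_{\pi}$, preserves the property $C_{\pi}(m,n)$, and, since $G/\langle G_{\pi}\rangle$ is a $\pi'$-group, a Hall $\pi$-subgroup of $\langle G_{\pi}\rangle$ is a Hall $\pi$-subgroup of $G$. So we assume $G=\langle G_{\pi}\rangle$.

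If $G$ contained $m+n$ pairwise non-commuting $\pi$-elements, splitting them into disjoint sets of sizes $m$ and $n$ would contradict $G\in C_{\pi}(m,n)$; hence every set of pairwise non-commuting $\pi$-elements has size at most $c$. In particular, for $p\in\pi$ and $P\in\Syl_p(G)$ the group $P\subseteq G_{\pi}$ has no $c+1$ pairwise non-commuting elements, so a theorem of Pyber (a quantitative form of B.~H.~Neumann's theorem) gives $|P:Z(P)|\le f_1(c)$ for an absolute function $f_1$; moreover if $P$ is non-abelian, picking $a,b\in P$ with $ab\ne ba$, the cosets $aZ(P)$ and $bZ(P)$ are disjoint (since $a^{-1}b\notin Z(P)$) and every element of the first fails to commute with every element of the second (because $(az)(bw)=(ab)zw\ne(ba)zw=(bw)(az)$ for central $z,w$), whence $|Z(P)|\le N-1$ and $|P|<Nf_1(c)$. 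Thus every Sylow subgroup of $G$ at a prime of $\pi$ is abelian or has order less than $Nf_1(c)$.

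The main point is that if $|G|_{\pi}$ exceeds a suitable bound $g_0(m,n)$ then $G$ is \emph{$\pi$-separable}. Suppose not; then $G$ has a non-abelian simple composition factor $S$, say $S=M/V$ with $M$ subnormal in $G$, with $p\mid|S|$ for some $p\in\pi$. By the strengthened form of Theorem G, some conjugacy class $K$ of $p$-elements of $S$ cannot be covered by fewer than $t(|S|)$ proper subgroups, where $t(|S|)\to\infty$ with $|S|$. Now a maximal subset $T\subseteq K$ of pairwise non-commuting elements is dominating (otherwise it could be enlarged), so $K\subseteq\bigcup_{y\in T}C_S(y)$ is a union of $|T|$ proper subgroups of the simple group $S$; hence $|T|\ge t(|S|)$. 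Lifting each element of $T$ to a $p$-element of $M$ (a Sylow $p$-subgroup of the preimage in $M$ of the relevant cyclic $p$-group of $S$ maps onto it) yields $t(|S|)$ pairwise non-commuting $\pi$-elements of $G$, so $|S|$ is bounded in terms of $m,n$. Likewise, if $G$ had many non-abelian composition factors of order divisible by a prime of $\pi$, then using for each a non-commuting pair of $p$-elements — which exists, since otherwise the $p$-elements of that factor would generate a nontrivial abelian normal subgroup, forcing the factor to be abelian — an analogous direct-product construction, carried out layer by layer along a chief series inside $F^{*}$ of a suitable section of $G$, produces exponentially many pairwise non-commuting $\pi$-elements; a parallel argument bounds the $\pi$-part of the outer automorphism groups of the $\pi'$-components. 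Together with the bound on $O_{\pi}(G)$ from the previous paragraph, these force $|G|_{\pi}$ bounded, a contradiction. So $G$ is $\pi$-separable.

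A $\pi$-separable group has Hall $\pi$-subgroups, all conjugate; let $A$ be one, so $|A|=|G|_{\pi}$. Applying to $A\subseteq G_{\pi}$ the argument of the second paragraph gives $|A:Z(A)|\le f_1(c)$, and were $A$ non-abelian the coset trick would give $|Z(A)|\le N-1$, hence $|A|<Nf_1(c)$. Setting $g(m,n):=\max\{Nf_1(c),g_0(m,n)\}$, the hypothesis $|G|_{\pi}>g(m,n)$ forces $A$ abelian, proving the theorem. The hard part is the $\pi$-separability step: converting Theorem G into a large non-commuting set of $p$-elements is clean (a maximal non-commuting subset of a conjugacy class dominates it by proper centralizers), but the bookkeeping needed to run this descent through an entire composition series — bounding the number of $\pi$-divisible non-abelian composition factors rather than merely the size of one, and controlling the $\pi$-part arising from outer automorphisms of $\pi'$-components — is precisely where the classification of finite simple groups, through Theorem G and the fixed point ratio estimates of Theorems A and B, enters.
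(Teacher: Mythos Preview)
Your overall architecture is sound, and several ingredients---the bound $n_{\pi}(G)<m+n$, the coset trick, the use of Theorem~G (via Theorem~\ref{simnc}) to bound the size of a single non-abelian simple section, and the final step forcing a Hall $\pi$-subgroup to be abelian---are correct and match the paper. The gap is in your third paragraph, where you claim that $|G|_{\pi}$ large forces $G$ to be $\pi$-separable. At the end you invoke ``the bound on $O_{\pi}(G)$ from the previous paragraph'', but no such bound was proved: your second paragraph only bounds a Sylow $p$-subgroup $P$ \emph{when $P$ is non-abelian}. When $O_{\pi}(G)$ is a large abelian $\pi$-group that paragraph says nothing, and this is precisely the case that needs work. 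Your ``layer by layer'' remarks about many non-abelian composition factors and about the $\pi$-part of outer automorphism groups of $\pi'$-components are too vague to fill the hole, and in any case they do not account for abelian $\pi$-composition factors.

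The paper sidesteps this by never asserting $\pi$-separability. It introduces the $\pi$-separable radical $R$ and treats the two pieces separately. For the top, $G/R$ has trivial $\pi$-separable radical, so $F^{*}(G/R)$ is genuinely a direct product of non-abelian simple groups of $\pi$-divisible order; now the Theorem~G argument and the direct-product construction of a non-commuting set are clean, and they bound $|G/R|$. For the bottom, assuming $G$ has no Hall $\pi$-subgroup, $|R|_{\pi}$ is bounded: if a Hall $\pi$-subgroup of $R$ is non-abelian one uses exactly your paragraph-2 argument; if it is abelian then, after killing $O_{\pi'}(G)$, Hall--Higman's Lemma~1.2.3 identifies it with $K:=O_{\pi}(G)$, and since $G/K$ still lacks Hall $\pi$-subgroups one finds $\pi$-elements $x,y\in G$ whose images in $G/K$ do not commute, whence the cosets $xK,yK\subseteq G_{\pi}$ contain no commuting pair and $|K|<\max(m,n)$. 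This last move is exactly your coset trick, but applied to $K=O_{\pi}(G)$ inside $G$ with the non-commuting pair supplied by the quotient, not to $Z(P)$ inside a Sylow. Inserting this step would repair your argument; in fact, since $G$ not $\pi$-separable already guarantees non-commuting $\pi$-elements in $G/R$ and hence in $G/K$, it would even yield your stronger intermediate claim of $\pi$-separability.
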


Note that by a theorem of Wielandt \cite{wie}, if a finite group has an abelian Hall $\pi$-subgroup, then all Hall $\pi$-subgroups are conjugate and every $\pi$-subgroup is contained in some Hall $\pi$-subgroup.

We will also prove that an affirmative answer to the question raised at the end of \cite{bgmn} on the commuting probability of $p$-elements implies a local version of a theorem of B. H. Neumann \cite{neu}, which 
 answered a question of Erd\H os (see Theorem \ref{localneu}).

The layout of the paper is as follows. We prove Theorem A in Section 3 and Theorem B in Section 4. We use these results to prove the theorems on Sylow numbers in Section 5 and those on covering the set of $p$-elements in Section 6. We conclude in Section 8 with examples showing that most of the hypotheses in our results are necessary.

\section{Fixed point ratios for primitive groups}

Let $G$ be a transitive permutation group acting on a finite set $\Omega$ with point stabilizer $H$. For $x\in G$, the fixed point ratio of $x$ on $\Omega$ is defined to be
$$\fpr(x,\Omega)=\frac{|\bC_{\Omega}(x)|}{|\Omega|}.$$ 

A powerful theorem of Burness and Guralnick \cite{bg} provides an explicit upper bound for $\fpr(x,\Omega)$ in case $x$ is an element of prime order of a primitive permutation group acting on $\Omega$. 

\begin{thm}[Theorem 1 of \cite{bg}]\label{fpr1}
	Let $G$ be a primitive permutation group acting on a finite set $\Omega$. Let $H$ be a point stabilizer in $G$. If $x \in G$ is an element of prime order $p$, then either
	$$\fpr(x,\Omega)\leq \frac{1}{p+1},$$
	or one of the following holds:
	
	\begin{itemize}
		
		\item [(i)] $G$ is almost simple and one of the following holds.
		
		\begin{itemize}
			\item [a)] $G\in \{S_n, A_n \}$ and $\Omega$ is the set of $t$-element subsets of a set of size $n$, with $1\leq t < n/2$.
			
			\item [b)]  $G$ is a classical group in a subspace action and    $(G,H,x,\fpr(x,\Omega))$ is listed in \cite[Table 6]{bg}.
			
			\item [c)]  $G=S_n$, $H=S_{n/2}\wr C_2$ and $x$ is a transposition.
			
			\item [d)]  $G= \mathrm{M}_{22}:2$, $H=\mathrm{L}_3(4).2_2$ and $x \in 2B$.
		\end{itemize}
		
		\item [(ii)] $G$ is an affine group with socle $(C_p)^d$, the element $x \in \GL(d,p)$ is a transvection and $\fpr(x, \Omega) = 1/p$.
		
		\item [(iii)] $G \leq L \wr S_k$ is a product type group with its product action on $\Omega=\Gamma^k$, $k \geq 2$ and $x \in L^k \cap G$, where $L \leq \Sym(\Gamma)$ is one of the almost simple primitive groups in part (i).
	\end{itemize}
\end{thm}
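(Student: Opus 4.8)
The plan is to run through the O'Nan--Scott classification of finite primitive permutation groups, treating each type separately, and to exploit throughout the identity $\fpr(x,\Omega)=|x^G\cap H|/|x^G|$, so that the problem becomes one of estimating how much of the conjugacy class $x^G$ meets the maximal point stabilizer $H$. First I would dispose of the types in which $|H|$ is small relative to $|G|$. For groups of diagonal type and of twisted wreath type a crude bound using $|x^G\cap H|\le|H|$ and $|x^G|=|G|/|\bC_G(x)|$ already gives $\fpr(x,\Omega)<1/(p+1)$ for every element of prime order. For a product type group $G\le L\wr \Sym(\Gamma)^{\,k}$ acting on $\Gamma^k$ with $x=(x_1,\dots,x_k)\in L^k$ one has $\bC_{\Omega}(x)=\prod_i \bC_{\Gamma}(x_i)$, so the bound reduces multiplicatively to the almost simple factor $L$ --- this is exactly the content of case (iii) --- while an element that permutes the coordinates nontrivially fixes strictly fewer points and causes no trouble. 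For an affine group with socle an elementary abelian $r$-group $V$, an element $x$ of prime order $p$ either is a translation (fixing no point, $\fpr=0$) or normalizes a point stabilizer, in which case $\bC_{\Omega}(x)$ is a coset of $C_V(x)$; a short fixed-space computation, unipotent when $p=r$ and semisimple (analysed via Galois orbits of eigenvalues) when $p\neq r$, shows $\fpr(x,\Omega)\le 1/p$, and that it exceeds $1/(p+1)$ only when $p=r$ and $x$ is a transvection --- case (ii).

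The bulk of the work is the almost simple case: $T:=\mathrm{soc}(G)$ is nonabelian simple and $H$ is a maximal subgroup of $G$ not containing $T$. I would split according to $T$. If $T=A_n$, the O'Nan--Scott description of maximal subgroups of $S_n$ and $A_n$ gives the possibilities for $H$: intransitive ($H\cap A_n=(S_t\times S_{n-t})\cap A_n$, the action on $t$-subsets), imprimitive (partition stabilizers), primitive of product type, affine, diagonal, or almost simple. The $t$-subset action is where large fixed-point ratios occur --- a permutation of prime order $p$ can fix a large proportion of $t$-subsets --- and these give cases (i)(a) and, for $n$ even with $H=S_{n/2}\wr C_2$ and $x$ a transposition, case (i)(c); for all the other actions the minimal degree is large (Guralnick--Magaard, extending Liebeck--Saxl), and a direct count of fixed partitions or fixed tuples shows every prime-order element has $\fpr(x,\Omega)\le 1/(p+1)$. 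If $T$ is sporadic, this is a finite computation with known maximal subgroups and character tables, yielding the single exception $\mathrm{M}_{22}{:}2$ in case (i)(d). If $T$ is exceptional of Lie type, the bounds of Lawther--Liebeck--Seitz give $\fpr(x,\Omega)\le|x^G|^{-1/2+o(1)}$, and since the smallest conjugacy class of a prime-order element is already large enough, one obtains $\fpr(x,\Omega)\le 1/(p+1)$ with no exceptions.

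The remaining and decisive sub-case is $T$ classical. Here I would invoke Aschbacher's theorem to place $H$ in one of the geometric families $\mathcal{C}_1,\dots,\mathcal{C}_8$ or in the class $\mathcal{S}$ of (almost) simple irreducible subgroups. If $H\in\mathcal{C}_1$ --- the stabilizer of a subspace, either totally singular/isotropic (parabolic) or non-degenerate --- these are the subspace actions, and one computes $|x^G\cap H|/|x^G|$ explicitly, separately for $x$ unipotent of order the defining characteristic $p$ (where transvections and, more generally, elements with small Jordan form are extremal) and for $x$ of prime order $\ell$ distinct from the characteristic (semisimple, controlled by eigenspace dimensions over an algebraic closure). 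Determining exactly when this ratio exceeds $1/(p+1)$, across all classical types, all subspace stabilizers, and all almost simple extensions, is what produces \cite[Table 6]{bg}. For $H$ in any of $\mathcal{C}_2,\dots,\mathcal{C}_8$ or in $\mathcal{S}$ --- the non-subspace actions --- one uses Burness's fixed-point-ratio estimates for classical groups (of $|x^G|^{-1/2+o(1)}$ type), made effective in small rank and over small fields, together with lower bounds on $|x^G|$ for prime-order $x$ (centralizer-order estimates), to conclude $\fpr(x,\Omega)\le 1/(p+1)$ with no further exceptions.

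So the heart of the matter --- and the step I expect to fight hardest with --- is the fine analysis of classical groups in subspace actions: obtaining closed-form or sharp-enough expressions for $|x^G\cap H|$ when $x$ has prime order, separating the defining-characteristic (unipotent) case from the cross-characteristic (semisimple) one, and controlling the boundary cases of small dimension and small field, where the geometric subgroup structure degenerates and where the sporadic-looking exceptions --- and the small-$S_n$ ones that surface through exceptional isomorphisms such as $\PSL$ and $\SL$ in low rank --- actually occur. A secondary technical nuisance is the bookkeeping over diagonal and field automorphisms, so that the inequality holds for every almost simple $G$ with the given socle rather than only for $T$ itself, and turning the asymptotic bounds of Burness and of Lawther--Liebeck--Seitz into the clean uniform statement $\fpr(x,\Omega)\le 1/(p+1)$ with an explicit finite list of exceptions.
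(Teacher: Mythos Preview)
The paper does not prove this theorem at all: Theorem~\ref{fpr1} is quoted verbatim as ``Theorem~1 of \cite{bg}'' and used as a black box, with no proof or sketch given. There is therefore nothing in the paper to compare your proposal against.

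That said, your outline is a reasonable high-level summary of the strategy Burness and Guralnick actually use in \cite{bg}: reduce via O'Nan--Scott to the almost simple case, handle affine, diagonal, twisted wreath and product types separately (the product-type reduction you describe is essentially \cite[Remark~1(c)]{bg}), and then within almost simple groups run Aschbacher's theorem on the point stabilizer, with the subspace actions of classical groups as the genuinely hard case. Your identification of the difficulties is accurate: the bulk of \cite{bg} is precisely the detailed case analysis producing Table~6, and the bookkeeping over outer automorphisms. But you should be aware that what you have written is a plan, not a proof --- the actual argument in \cite{bg} runs to some ninety pages, and phrases like ``a direct count shows'' or ``one computes explicitly'' are standing in for substantial work. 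For the purposes of the present paper none of this is needed; the result is simply cited.
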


Note that this form of part (iii) of Theorem \ref{fpr1} follows from \cite[Remark 1(c)]{bg}.   

A purpose of this section is to prove the following result for primitive groups.

\begin{thm}
\label{oldA}	
	Let $G$ be a primitive permutation group acting on a finite set $\Omega$. Let $p$ be a prime divisor of the size of $G$. Then there exists an element $y$ in $G$ of order dividing $p^2$ such that $$\fpr(y,\Omega) \leq \frac{p-1}{2p-1}.$$ Moreover, at least one of the following holds.
	\begin{itemize}
		\item[(i)] There exists an element $x$ in $G$ of order $p$ such that
		$\fpr(x,\Omega) \leq 1/(p+1)$.   
		
		\item[(ii)] $p$ is odd and $G \leq S_{m} \wr S_{k}$ is a product type primitive group with its product action on $\Omega = \Gamma^{k}$, where $1 \leq k \leq \log_{2}(p+1)$, the set $\Gamma$ is the set of $t$-element subsets of a set of size $m$, and $p+1 < m < p^{2}-p$. In this case there exists an element $x$ of order $p$ in $G$ such that $\fpr(x,\Omega) \leq (p-1)/(2p-1)$. 
		
		\item[(iii)] The prime $p$ is a Mersenne prime and the group $G$ is almost simple with socle $\mathrm{SL}(2,p+1)$ and there exists an element $x$ of order $p$ in $G$ such that $\fpr(x,\Omega) = 2/(p+2)$. 
		
		\item[(iv)] $p=2$, the group $G$ is almost simple with socle $\mathrm{SL}(3,2)$, there exists an involution $x$ in $G$ with $\fpr(x,\Omega) = 3/7$ and there exists an element $y$ of order $4$ in $G$ with $\fpr(y,\Omega) = 1/7$.
		
		\item[(v)] $p = 2$, the group $G$ is $A_6$ acting on $6$ points or is $A_7$ acting on $7$ points, there exists an element $x$ of order $2$ in $G$ with $\fpr(x,\Omega) \leq 3/7$ and there exists an element $y$ of order $4$ in $G$ with $\fpr(y,\Omega) = 0$ when $G = A_{6}$ and with $\fpr(y,\Omega) = 1/7$ when $G = A_{7}$. 
		
	\end{itemize} 
\end{thm}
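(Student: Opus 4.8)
The strategy is to apply Theorem~\ref{fpr1} to a well-chosen element of order $p$ and then, in each exceptional conclusion of that theorem, either to exhibit a better $p$-element (reaching (i)) or to recognise one of the configurations (ii)--(v). Since $p\mid |G|$, there is $x\in G$ of order $p$; if $\fpr(x,\Omega)\le 1/(p+1)$ we are in (i) and may take $y=x$, because $1/(p+1)\le (p-1)/(2p-1)$ for every prime $p$. So assume $x$ lies in an exceptional case of Theorem~\ref{fpr1} and argue by the O'Nan--Scott type of $G$. If $G$ is not of affine, almost simple, or product type, then the bound $\fpr(x,\Omega)\le 1/(p+1)$ holds automatically. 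If $G$ is affine with socle $(C_\ell)^d$, then for $\ell=p$ any nontrivial translation is a fixed-point-free $p$-element, while for $\ell\ne p$ the element $x$ has order prime to $\ell$, so it is not a transvection of $\GL(d,\ell)$ and Theorem~\ref{fpr1} again gives $\fpr(x,\Omega)\le 1/(p+1)$. In all these cases we are in (i).

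Suppose $G$ is of product type, $G\le L\wr S_k$ in its product action on $\Omega=\Gamma^k$ with $k\ge 2$ and $L$ almost simple primitive on $\Gamma$ with socle $T=\mathrm{soc}(L)$, so that $T^k=\mathrm{soc}(G)\le G$. If $p\mid k$, then $G$ contains a $p$-element $z$ whose image in $S_k$ is nontrivial, and since the number of $z$-fixed points of $\Gamma^k$ is at most $|\Gamma|^c$ with $c\le k-(p-1)$ the number of cycles of that image, and $|\Gamma|\ge 4$ for every $\Gamma$ occurring in Theorem~\ref{fpr1}(i), we get $\fpr(z,\Omega)\le |\Gamma|^{-(p-1)}\le 1/(p+1)$, so we are in (i). If $p\nmid k$ we work in the base group, using $\fpr\bigl((g_1,\dots,g_k),\Omega\bigr)=\prod_i\fpr(g_i,\Gamma)$. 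If $T$ has an element $t$ of order $p$ with $\fpr(t,\Gamma)\le 1/(p+1)$, then $(t,1,\dots,1)\in T^k\le G$ gives (i). Otherwise the almost simple analysis below, applied to $T$ on $\Gamma$, leaves only two possibilities. Either some order-$p$ element $t$ of $T$ has $\fpr(t,\Gamma)=\rho$ with $\rho\in\{2/(p+2),\,3/7\}$ and $\rho^2\le 1/(p+1)$ -- the value $2/(p+2)$ from $\SL(2,p+1)$ on $p+2$ points with $p$ a Mersenne prime, the value $3/7$ from $\SL(3,2)$ or $A_7$ on $7$ points with $p=2$ -- so that $(t,t,1,\dots,1)\in G$ gives (i). Or $T=A_m$ acting on $t$-subsets with $p$ odd and $p+1<m<p^2-p$; fixing the order-$p$ element $x_1$ of $A_m$ supplied by the estimate below (so $\fpr(x_1,\Gamma)\le(p-1)/(2p-1)$), the element with $r$ entries equal to $x_1$ and the rest $1$ has fixed point ratio $\fpr(x_1,\Gamma)^r$, which drops below $1/(p+1)$ once $r\ge\log_2(p+1)$ (with $\fpr(x_1,\Gamma)=(p-1)/(2p-1)$ at $m=2p-1$, $t=1$); hence either $k>\log_2(p+1)$ and we reach (i), or $k\le\log_2(p+1)$ and we are in case (ii), with $y=(x_1,1,\dots,1)$.

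For $G$ almost simple, $x$ lies in one of (i)a--(i)d of Theorem~\ref{fpr1}. In cases (i)b--(i)d one inspects Table~6 of \cite{bg}, the group $\mathrm{M}_{22}:2$ on $22$ points, and the classical subspace actions: outside a short list of small cases, $G$ contains an order-$p$ element with fixed point ratio at most $1/(p+1)$ -- for instance, in case (i)d an inner involution of $\mathrm{M}_{22}$ has fixed point ratio $3/11$ on the $22$ points -- giving (i); the only residual cases are $\SL(2,p+1)$ on $p+2$ points with $p$ a Mersenne prime, where an order-$p$ element $x$ in a split torus fixes exactly two of the $p+2$ points, so $\fpr(x,\Omega)=2/(p+2)$ and we are in case (iii) with $y=x$, and $\SL(3,2)$ on $7$ points, where every involution fixes $3$ points and a unipotent element $y$ of order $4$ fixes exactly one, so $\fpr(y,\Omega)=1/7$ and we are in case (iv). In case (i)a, $G\in\{S_n,A_n\}$ acts on $t$-subsets with $1\le t<n/2$. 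The number of $t$-subsets fixed by the $p$-element $x_1$ that is a product of $j:=\lfloor n/p\rfloor$ disjoint $p$-cycles (with $n-jp$ further fixed points) equals the coefficient of $q^t$ in $(1+q^p)^j(1+q)^{n-jp}$, namely $\binom{j}{\lfloor t/p\rfloor}\binom{n-jp}{t\bmod p}$ if $t\bmod p\le n-jp$ and $0$ otherwise; combined with the inequality $\binom{n}{t}\ge\binom{j}{\lfloor t/p\rfloor}^{\,p}$, this gives $\fpr(x_1,\Omega)\le(p-1)/(2p-1)$ for all $t$, and $\fpr(x_1,\Omega)\le 1/(p+1)$ whenever $n\ge p^2-p$ or $n\le p+1$. (For $p$ odd, $x_1\in A_n$ automatically; for $p=2$ one replaces $j$ by $j-1$ when parity forces it, and the residual cases $A_6$ on $6$ points and $A_7$ on $7$ points -- where the best involutions have fixed point ratio $1/3$, respectively $3/7$ -- are checked directly and give case (v), with $y$ of order $4$ and cycle type $(4,2)$ (so $\fpr(y,\Omega)=0$), respectively cycle type $(4,2,1)$ (so $\fpr(y,\Omega)=1/7$).) Thus either $n\le p+1$ or $n\ge p^2-p$ and we are in (i), or $p+1<n<p^2-p$ with $p$ odd and we are in case (ii) with $k=1$ and $y=x_1$. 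In every branch the required element $y$ of order dividing $p^2$ with $\fpr(y,\Omega)\le(p-1)/(2p-1)$ has been produced.

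The step I expect to be the genuine obstacle is the $t$-subset estimate of case (i)a: one must establish the comparison $\binom{n}{t}\ge\binom{\lfloor n/p\rfloor}{\lfloor t/p\rfloor}^{\,p}$, control the remaining factor $\binom{n-jp}{\,t\bmod p}$, and deduce both that $n\ge p^2-p$ forces conclusion (i) and that the bound $(p-1)/(2p-1)$ -- sharp at $n=2p-1$, $t=1$ -- holds below that range, uniformly in $t$ and with the parity restriction in $A_n$ handled separately when $p=2$; by comparison the O'Nan--Scott reduction, the affine case, and the finite Table~6 inspection are routine.
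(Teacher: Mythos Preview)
Your proposal is correct and follows essentially the same route as the paper: apply the Burness--Guralnick theorem (Theorem~\ref{fpr1}) and work through its exceptional cases, with the combinatorial estimate for $x_1$ on $t$-subsets supplying the alternating case and Table~6 of \cite{bg} the classical subspace case. The one noteworthy difference is in the product-type case: the paper invokes \cite[Remark~1(c)]{bg} and \cite[Proposition~6.2]{bg} to pass from the factor $L$ on $\Gamma$ to $G$ on $\Gamma^k$, whereas you argue directly with explicit socle elements $(t,1,\dots,1)$, $(t,t,1,\dots,1)$, and $(x_1,\dots,x_1,1,\dots,1)\in T^k\le G$. Your version is more self-contained and in fact slightly cleaner when $L=A_7$ on $7$ points (where no involution of $L$ has ratio $\le 1/3$ on $\Gamma$, but your diagonal element $(t,t,1,\dots,1)$ with $\fpr=(3/7)^2<1/3$ handles it); the paper's citation of \cite{bg} absorbs this. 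Two small clean-ups: your $p\mid k$ versus $p\nmid k$ split is unnecessary, since once you know $p\mid|T|$ (which follows in every subcase of Theorem~\ref{fpr1}(i)) the base-group argument works uniformly; and you should mention case~(i)(c) of Theorem~\ref{fpr1} ($G=S_n$, $H=S_{n/2}\wr C_2$) explicitly rather than folding it into the Table~6 inspection, since it is not in that table---the paper dispatches it in one line, and so can you.
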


We need two elementary lemmas before we can start the proof of Theorem \ref{oldA}.

\begin{lem}
	\label{SumComb}
	If $c_1,\ldots ,c_t, d_1, \ldots ,d_t$ are non-negative integers with $c_i\geq d_i$ for all $i$ with $1 \leq i \leq t$, then 
	$$\prod_{i=1}^{t}{c_i \choose d_i}\leq {\sum_{i=1}^{t} c_i \choose \sum_{i=1}^{t}d_i}.$$
\end{lem}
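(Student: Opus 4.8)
The plan is to prove this by a direct counting argument. First I would fix a set $S$ with $|S| = \sum_{i=1}^{t} c_i$ together with a partition $S = B_1 \sqcup \cdots \sqcup B_t$ into blocks of sizes $|B_i| = c_i$. The binomial coefficient on the right counts the subsets of $S$ of size $\sum_{i=1}^{t} d_i$, while the product on the left counts precisely those subsets $T$ of $S$ that meet each block $B_i$ in exactly $d_i$ points, since one may choose the $d_i$ points inside $B_i$ independently over $i$. As every such $T$ has total size $\sum_{i=1}^{t} d_i$ and is therefore among the subsets counted by the right-hand side, the left-hand count is bounded above by the right-hand count, which is the claimed inequality.

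An alternative I would keep in reserve is induction on $t$, which reduces at once to the case $t = 2$. There the inequality $\binom{c_1}{d_1}\binom{c_2}{d_2} \leq \binom{c_1+c_2}{d_1+d_2}$ follows from the Vandermonde identity
$$\binom{c_1+c_2}{d_1+d_2} = \sum_{k \geq 0} \binom{c_1}{k}\binom{c_2}{d_1+d_2-k},$$
since all summands are non-negative and the single term $k = d_1$ equals $\binom{c_1}{d_1}\binom{c_2}{d_2}$; one then applies this with the second block replaced by $\bigcup_{i \geq 2} B_i$ and invokes the inductive hypothesis.

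This lemma is elementary and I do not anticipate a genuine obstacle. The only point to watch is that $\sum_{i=1}^{t} d_i$ may exceed some individual $c_j$, but this never enters the argument: in the combinatorial proof the right-hand side is never refined along the block structure, and in the inductive proof the Vandermonde expansion is valid for arbitrary non-negative integer parameters.
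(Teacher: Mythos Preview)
Your proof is correct. The paper actually states this lemma without proof, treating it as elementary, so there is nothing to compare against; your direct counting argument (choosing $d_i$ elements from each block $B_i$ of a partitioned $\big(\sum_i c_i\big)$-set yields a subset of size $\sum_i d_i$, and the map is injective) is exactly the standard justification one would supply, and your Vandermonde alternative is equally valid.
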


\begin{lem}\label{Case}
	If $a,b>0$ and $r\geq 0$ are non-negative integers such that $r \leq a\leq b$, then 
	$$\frac{{a \choose r}}{{b \choose r}}\leq \frac{a}{b}.$$
\end{lem}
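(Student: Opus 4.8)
The plan is to rewrite the quotient of binomial coefficients as a product of $r$ elementary fractions and then bound each factor. Writing $\binom{a}{r}=a(a-1)\cdots(a-r+1)/r!$ and similarly for $\binom{b}{r}$, the two copies of $r!$ cancel and one obtains
$$\frac{\binom{a}{r}}{\binom{b}{r}}=\prod_{i=0}^{r-1}\frac{a-i}{b-i}.$$
Since $r\le a\le b$, every numerator $a-i$ with $0\le i\le r-1$ is at least $a-(r-1)\ge 1$, and every denominator $b-i\ge a-i\ge 1$, so all factors are strictly positive and the product is genuinely defined.

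First I would isolate the $i=0$ factor, which is exactly $a/b$, and estimate the remaining factors. For each $i$ with $1\le i\le r-1$ the hypothesis $a\le b$ gives $a-i\le b-i$ with both sides positive, hence $\frac{a-i}{b-i}\le 1$. Multiplying these bounds together yields
$$\frac{\binom{a}{r}}{\binom{b}{r}}=\frac{a}{b}\cdot\prod_{i=1}^{r-1}\frac{a-i}{b-i}\le\frac{a}{b},$$
which is the desired inequality. A slightly sharper per-factor estimate is also available, namely $\frac{a-i}{b-i}\le\frac{a}{b}$ for every $i\ge 1$, since clearing denominators turns this into $ia\le ib$, i.e. into $a\le b$; but only the weaker bound $\le 1$ is needed for the product above.

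The argument is entirely elementary, so I expect no serious obstacle; the only point to watch is the degenerate case $r=0$, where the quotient equals $1$ and the asserted bound $a/b$ is attained only when $a=b$, so in applications the lemma is invoked with $r\ge 1$. As an alternative one can argue by induction on $r$: the base case $r=1$ gives equality $a/b$, and the inductive step multiplies the previous bound by $\frac{a-r}{b-r}\le 1$, the situation $r\ge a$ being handled trivially because then $\binom{a}{r+1}=0$. Either route gives the result with no computation beyond the telescoping identity and the single inequality $a\le b$.
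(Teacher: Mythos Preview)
Your proof is correct and follows essentially the same approach as the paper: both expand the quotient of binomial coefficients as the product $\prod_{i=0}^{r-1}\frac{a-i}{b-i}$ and bound the factors, the paper using the sharper per-factor estimate $\frac{a-i}{b-i}\le \frac{a}{b}$ (which you also note) to reach $(a/b)^r\le a/b$, while you simply keep the $i=0$ factor and bound the rest by $1$. Your remark on the degenerate case $r=0$ is apt; the paper's argument has the same defect there (the step $(a/b)^r\le a/b$ fails), and the lemma is only invoked in the paper with $r=t\ge 1$.
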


\begin{proof}
	We observe that  $\frac{a-i}{b-i}\leq \frac{a}{b}$ for any $i\geq 1$. It follows that
	$$\frac{{a \choose r}}{{b \choose r}}=\frac{a(a-1)\ldots (a-r+1)}{b(b-1)\ldots (b-r+1)}\leq \Big(\frac{a}{b}\Big)^r\leq \frac{a}{b}.$$
\end{proof}

\begin{proof}[Proof of Theorem \ref{oldA}]
The first statement of the theorem follows from the second. We will prove the second claim. 

We may assume by Theorem \ref{fpr1} that Case (i), (ii) or (iii) holds in Theorem \ref{fpr1}, otherwise Case (i) of the theorem holds. If any of the Cases (i)(c), (i)(d) or (ii) holds, then there exists an element $y$ in $G$ such that $\fpr(y,\Omega) \leq 1/(p+1)$ and again Case (i) holds.  

Assume that Case (i)(a) of Theorem \ref{fpr1} holds. Write $n$ in the form $p \ell + r$ and write $t$ in the form $p \ell_{0} + r_{0}$ for non-negative integers $\ell$, $\ell_{0}$, $r$, $r_{0}$, where $r$ and $r_{0}$ are at most $p-1$. For $p$ odd, let $x$ be an element of $G$ (of order $p$) whose disjoint cycle decomposition consists of $\ell$ cycles of length $p$ and $r$ cycles of length $1$. For $p=2$, let $x$ be an element of $G$ of order $2$ whose disjoint cycle decomposition consists of $\ell$ cycles of length $2$ if $\ell$ is even and $\ell - 1$ cycles of length $2$ when $\ell$ is odd and $G = A_{n}$. 

Let $p$ be odd or let $p=2$ and $\ell$ be even. In this case the fixed point ratio of $x$ is $$\frac{\binom{\ell}{\ell_{0}} \binom{r}{r_{0}}  }{\binom{n}{t}} \leq \frac{\binom{\ell}{\ell_{0}} \binom{r}{r_{0}}  }{{ \binom{\ell}{\ell_{0}}}^{p} \binom{r}{r_{0}} } = {\binom{\ell}{\ell_{0}}}^{1-p}$$ by Lemma \ref{SumComb}. This is at most $1/(p+1)$ unless possibly if $\binom{\ell}{\ell_{0}} = 1$ or $\binom{\ell}{\ell_{0}} = 2$ and $p=2$. Assume that an exceptional case holds. Let $p=2$. Clearly, $n \geq 5 = 2p+1$ and so $\binom{n}{t} \geq 2p+2$ or $(n,t) = (5,1)$. In any case $\fpr(x,\Omega) = \binom{\ell}{\ell_{0}}/\binom{n}{t} \leq 1/(p+1)$. Now let $p$ be odd. Since $\binom{\ell}{\ell_{0}} = 1$ and $2t \leq n$, we have $\ell_{0} = 0$ and 
\begin{equation}
\label{seged1}	
\fpr(x,\Omega) = \frac{\binom{r}{r_{0}}}{\binom{n}{t}} = \frac{\binom{r}{t}}{\binom{n}{t}} \leq \frac{r}{n} = \frac{r}{p\ell + r}
\end{equation}
by Lemma \ref{Case}. This is at most $1/(p+1)$ provided that $\ell \geq p-1$. This occurs when $n \geq p^{2}-p$. It is easy to see that $\fpr(x,\Omega) \leq 1/(p+1)$ provided that $n \in \{ p, p+1 \}$. For all other values of $n$, Case (ii) holds. 

Let $p=2$, let $\ell \geq 3$ be odd and let $G = A_{n}$. Since the $t$-element 
subsets (in $\Omega$) fixed by $x$ consist of $i$ $2$-cycles of $x$ and 
$j$ fixed points of $x$ such that $t = 2i+j$, we have 
\begin{equation}
\label{equation1}	
\fpr(x,\Omega) = \frac{\sum_{t=2i+j} \binom{\ell - 1}{i} \binom{r+2}{j}}{\binom{n}{t}}.
\end{equation}
For $t=1$ the fraction in (\ref{equation1}) is at most $1/3$, unless $n \in \{ 6, 7 \}$, when Case (v) holds. If $t=2$, then (\ref{equation1}) gives
$$\fpr(x,\Omega) \leq \frac{\ell + 2 }{\binom{n}{2}} = \frac{2\ell + 4}{n(n-1)} \leq \frac{n+4}{n(n-1)} \leq \frac{1}{3}.$$ A similar calculation shows that the bound $\fpr(x,\Omega) \leq 1/3$ also holds for $t=3$. 

Let $t \geq 4$. Since $t < n/2$, the integer $n$ must be at least $9$. Since $\ell$ is odd, it must then be at least $5$ (and so $n \geq 10$). Fix non-negative integers $i$ and $j$ such that $t = 2i + j$, $i \leq \ell -1$ and $j \leq r+2$. We have 
$$\binom{n}{t} = \binom{2 (\ell - 1) + (r+2)}{2i+j} \geq \binom{\ell -1}{i}^{2} \binom{r+2}{j}$$ by Lemma \ref{SumComb}. This and (\ref{equation1}) give $$\fpr(x,\Omega) \leq \sum_{t=2i+j} \binom{\ell - 1}{i}^{-1} \leq \frac{2}{\ell - 1} \leq \frac{1}{3}$$ provided that $\ell \geq 7$. Let $\ell = 5$ and so $n = 10$ or $n = 11$ and $t=4$. In this case $$\fpr(x,\Omega) = \frac{\sum_{4=2i+j} \binom{4}{i} \binom{r+2}{j}}{\binom{n}{4}} = \frac{6 + 2 (r+2)(r+1)}{\binom{n}{4}} \leq \frac{18}{\binom{10}{4}} < \frac{1}{3}$$ by (\ref{equation1}).

Let Case (i)(b) of Theorem \ref{fpr1} hold. Consider the various lines of \cite[Table 6]{bg}. With the exceptions of groups $G$ in lines 1, 2 and 3, an element $x$ of order $p$ may be chosen from $G$ such that $\fpr(x,\Omega) \leq 1/(p+1)$ holds. Case (i) holds. In the remaining of this paragraph, we will assume that Case (i) does not hold. If $G$ is of line 1 in \cite[Table 6]{bg}, then Case (iv) holds. If $G$ is of line 2 in \cite[Table 6]{bg}, then Case (iii) holds. If $G$ is of line 3 in \cite[Table 6]{bg}, then the exceptional element of order $3$ is a field automorphism of $\mathrm{SL}(2,8)$ but the socle $\mathrm{SL}(2,8)$ contains an element of order $3$ and so Case (i) holds.  

Finally, let Case (iii) of Theorem \ref{fpr1} hold. The group $L$ is of Case (i)(a), (i)(b), (i)(c) of Theorem \ref{fpr1}. If $L$ is of Case (i)(b) or Case (i)(c) of Theorem \ref{fpr1}, then Case (i) holds by \cite[Proposition 6.2]{bg}. Let $L$ be of Case (i)(a) of Theorem \ref{fpr1}. For $p=2$ we proved above that there exists an element $y$ of order $2$ in $L$ such that $\fpr(y,\Gamma) \leq 1/3$ and so Case (i) holds for $G$ by \cite[Remark 1(c)]{bg}. Let $p$ be odd. By the above computation and \cite[Remark 1(c)]{bg} (and by replacing $n$ with $m$), we have Case (i) or $G \leq S_{m} \wr S_{k}$ is a product type primitive group with its product action on $\Omega = \Gamma^{k}$, where $1 \leq k$, the set $\Gamma$ is the set of $t$-element subsets of a set of size $m$, and $p+1 < m < p^{2}-p$. Assume that the latter case holds. Let $k=1$. If Case (i) does not hold, then there exists an element $x$ of order $p$ in $G$ such that $$\fpr(x,\Omega) \leq \frac{r}{m} \leq \frac{p-1}{2p-1} < \frac{1}{2}$$ by (\ref{seged1}), where $m = \ell p + r$ for non-negative integers $\ell$ and $r$ with $0 \leq r \leq p-1$. In general, for any $k$, there exists an element $x$ of order $p$ in $G$ such that $\fpr(x,\Omega) < 2^{-k}$. If $k \leq \log_{2}(p+1)$, then Case (ii) holds, otherwise Case (i) holds.      
\end{proof}

We will need the following consequence of Theorem \ref{oldA}.

\begin{cor}
\label{corprimitive}	
Let $G$ be a primitive permutation group acting on a finite set $\Omega$. Let $p$ be a prime divisor of the size of $G$.
\begin{itemize}
	\item[(i)] If $p=2$, then there exists an element $x$ of order dividing $4$ in $G$ such that $\fpr(x,\Omega) \leq 1/3$. 
	\item[(ii)] If $p > 2$, the group $G$ is generated by its $p$-elements and $G$ is not isomorphic to $A_m$ with $p+1 < m < p^{2}-p$ and not isomorphic to $\mathrm{SL}(2,p+1)$ where $p$ is a Mersenne prime, then there exists an element $x$ of order $p$ in $G$ such that $\fpr(x,\Omega) \leq 1/(p+1)$. 
\end{itemize}	
\end{cor}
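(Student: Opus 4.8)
The plan is to read both parts directly off Theorem \ref{oldA}.

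Part (i) is immediate: applying the first assertion of Theorem \ref{oldA} with $p=2$ produces an element $y\in G$ of order dividing $p^{2}=4$ with $\fpr(y,\Omega)\le(p-1)/(2p-1)=1/3$, which is exactly the claim (no hypothesis beyond $2\mid|G|$ is needed).

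For part (ii) I would show that, under the stated hypotheses, only alternative (i) of Theorem \ref{oldA} can occur, and that alternative is precisely the desired conclusion. As $p>2$, cases (iv) and (v) of Theorem \ref{oldA} are excluded at once, so it remains to dispose of cases (ii) and (iii). In case (iii), $p=2^{n}-1$ is a Mersenne prime and $G$ is almost simple with socle $S=\mathrm{SL}(2,p+1)=\mathrm{PSL}(2,2^{n})$; since $\Out(S)\cong C_{n}$ is generated by field automorphisms and $p=2^{n}-1>n$ is prime (so $p\nmid n$), every element of $G$ of order $p$ lies in $S$, whence $G=\langle\,p\text{-elements of }G\,\rangle\le S\le G$ forces $G=\mathrm{SL}(2,p+1)$, against the hypothesis. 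In case (ii), $G\le S_{m}\wr S_{k}$ acts in product action on $\Omega=\Gamma^{k}$, where $\Gamma$ is the set of $t$-subsets of an $m$-set, $p+1<m<p^{2}-p$ and $1\le k\le\log_{2}(p+1)$. The key observation is that $k\le\log_{2}(p+1)<p$, so $p\nmid k!=|S_{k}|$; hence every $p$-element of $S_{m}\wr S_{k}$ lies in the base group $S_{m}^{k}$, and, $p$ being odd, each coordinate of such an element is an even permutation, so every $p$-element lies in $A_{m}^{k}$. Therefore $G\le A_{m}^{k}$. If $k\ge 2$ this contradicts primitivity of $G$ on $\Gamma^{k}$, because the base group $\Sym(\Gamma)^{k}$ preserves the partition of $\Gamma^{k}$ by first coordinate, which is nontrivial since $|\Gamma|=\binom{m}{t}\ge 2$. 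Hence $k=1$, so $G\in\{A_{m},S_{m}\}$ by Theorem \ref{fpr1}(i)(a); being generated by $p$-elements, which are even permutations, $G=A_{m}$ with $p+1<m<p^{2}-p$, again contradicting the hypothesis. This rules out (ii) and (iii), so Theorem \ref{oldA}(i) applies and part (ii) follows.

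The bookkeeping is routine; the one step requiring a little care is the elimination of case (ii) of Theorem \ref{oldA}, where one must combine $p\nmid k!$ (to push the $p$-elements into the base group) with the parity of elements of $p$-power order in $S_{m}$ (to push them into $A_{m}^{k}$) and then invoke primitivity. Everything else is a direct consequence of Theorem \ref{oldA}.
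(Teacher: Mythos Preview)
Your proof is correct and follows essentially the same route as the paper: both parts are read off Theorem \ref{oldA}, and for part (ii) you exclude cases (ii) and (iii) of that theorem by showing that the hypothesis ``$G$ is generated by its $p$-elements'' forces $G=A_{m}$ or $G=\mathrm{SL}(2,p+1)$. The paper simply asserts this last implication in one line, whereas you spell out the details (using $k<p$ and parity to push $G$ into $A_{m}^{k}$, then primitivity to force $k=1$; and using $p\nmid|\Out(\mathrm{PSL}(2,2^{n}))|$ in the Mersenne case), so your argument is a fleshed-out version of the paper's rather than a different approach.
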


\begin{proof}
Statement (i) is a direct consequence of Theorem \ref{oldA}. Consider statement (ii). Let $p > 2$. We may assume that the group $G$ is of Case (ii) or (iii) of Theorem \ref{oldA}. Since $G$ is generated by its $p$-elements, $G = A_{m}$ with $p+1 < m < p^{2}-p$ in Case (ii) and $G = \mathrm{SL}(2,p+1)$ for a Mersenne prime $p$ in Case (iii). However, these cannot occur by our hypothesis. 
\end{proof}

\section{Proof of Theorem A}

In this section we will prove Theorem A. 

Let $G$ be a transitive permutation group on a finite set $\Omega$. Let $p$ be a prime divisor of the size of $G$. Let $P$ be a Sylow $p$-subgroup of $G$.   

Let $\mathcal{B}$ be a (maximal) system of blocks of imprimitivity for $G$ such that the action of $G$ on $\mathcal{B}$ is primitive. For any element $x$ in $G$ we have $\fpr(x,\Omega) \leq \fpr(x,\mathcal{B})$ (because of $\omega\in\Omega$ is fixed by $x$ and $B$ is the block containing $\omega$, then $x$ fixes $B$). Let the kernel of the action of $G$ on $\mathcal{B}$ be $K$. If $G$ is generated by its $p$-elements, then the factor group $G/K$ is also generated by its $p$-elements. 

Part (i) of Theorem A follows from the previous paragraph and Theorem \ref{oldA}. Part (ii) of Theorem A follows from the previous paragraph and Corollary \ref{corprimitive}. 

Let $p>2$. Assume first that the transitive group $G$ is generated by its $p$-elements. In this special case, in order to prove (iii) and (iv) of Theorem A, we may assume that $G$ is a primitive permutation group. There exists an element $x$ of order $p$ in $G$ by Theorem \ref{oldA} such that $\fpr(x,\Omega) \leq 1/(p+1)$, except if $G = A_{m}$ with $p+1 < m < p^{2}-p$, when $\fpr(x,\Omega) \leq (p-1)/(2p-1)$. Let the number of orbits of the cyclic group $H$ generated by $x$ acting on $\Omega$ be $\ell$. We have
$$\ell = \frac{|\Omega|}{|H|} \sum_{h \in H} \fpr(h,\Omega) \leq \frac{|\Omega|}{|H|} ( 1 + (|H|-1) \fpr(x,\Omega)  ) =  \frac{|\Omega|}{p} ( 1 + (p-1) \fpr(x,\Omega)  ).$$  
Since $x$ may be taken to be in $P$, we see that the number of orbits of $P$ on $\Omega$ is at most $(p/(2p-1)) |\Omega|$ and is at most $(2/(p+1)) |\Omega|$ when $G$ is different from $A_{m}$ with $p+1 < m < p^{2}-p$. 

We turn to the proof of (iii) and (iv) of Theorem A. Let $L$ be the subgroup of the transitive group $G$ which is generated by all $p$-elements of $G$. Let the number of orbits of $L$ on $\Omega$ be $s$. Let these orbits be $\Omega_{1}, \ldots , \Omega_{s}$. The subgroup $P$ is contained in $L$. For each $i$, the number of orbits of $P$ on $\Omega_{i}$ is at most $(p/(2p-1)) |\Omega_{i}|$ by the previous paragraph. It follows that the number of orbits of $P$ on $\Omega$ is at most $(p/(2p-1)) |\Omega|$. This proves (iii) of Theorem A.

Assume that $L$ has no factor group isomorphic to an alternating group $A_m$ with $p+1 < m < p^{2}-p$. Since $P \leq L$, we know by the above that $P$ has at most $(2/(p+1))|\Omega_{i}|$ orbits on $\Omega_{i}$ for every $i$ with $1 \leq i \leq s$. It follows that $P$ has at most $(2/(p+1))|\Omega|$ orbits on $\Omega$. This establishes (iv)(a).  

Finally, we turn to the proof of (iv)(b). The Orbit-Counting Lemma gives
\begin{equation}
\label{seged2}	
\frac{2}{p+1} \geq \frac{1}{|P|} \sum_{g \in P} \fpr(g,\Omega)
\end{equation}
by (iv)(a). Let $\mu$ be $\min_{g \in P} \fpr(g,\Omega)$. This is less than $1$. We obtain  
$$\frac{2}{p+1} \geq \frac{1}{|P|} (1 + \mu (|P|-1)) = \mu + \frac{1-\mu}{|P|} > \mu$$ by (\ref{seged2}). This proves (iv)(b).

The proof is complete. 

\section{Proof of Theorem B}

In this section we will prove Theorem B. 

We may assume that $G$ is non-abelian. We may also assume that $G$ is a primitive permutation group by the first observation in the previous section. 

We start with the case when $G$ is an alternating group. 

In this paragraph we will define an element $x$ in $A_n$. Let the $p$-adic decomposition of $n$ be $a_{f}p^{f} + \ldots + a_{1}p + a_{0}$ where $f$ is an integer, $0 \leq a_{i} \leq p-1$ for all $i$ with $0 \leq i \leq p-1$ and $a_{f} > 0$. Let $p$ be odd. We define $x \in S_{n}$ to be a permutation whose disjoint cycle decomposition possesses $a_{i}$ cycles of length $p^{i}$ for every $i$ with $0 \leq i \leq f$. This is a $p$-element belonging to $A_{n}$. Let $p=2$. There are two cases. If $\sum_{i=1}^{f} a_{i}$ is even, then let $x$ be a permutation in $S_{n}$ whose disjoint cycle decomposition has $a_{i}$ cycles of length $p^{i}$ for every $i$ with $0 \leq i \leq f$. Observe that $x \in A_{n}$. If $\sum_{i=1}^{f} a_{i}$ is odd, then choose $x$ to be a permutation whose disjoint cycle decomposition consists of $2 + a_{f-1}$ cycles of length $2^{f-1}$ and $a_{i}$ cycles of length $2^{i}$ for all $0 \leq i \leq f-2$. Observe again that $x \in A_{n}$. 

Let $k$ be a positive integer less than $n/2$. We also need the $p$-adic decomposition of the positive integer $k$ less than $n/2$. Write $k = \sum_{i=0}^{f} b_{i} p^{i}$ where $b_{i}$ is an integer with $0 \leq b_{i} \leq p-1$ for every $i$ with $0 \leq i \leq f$. 

Let $\Omega$ be the set of $k$-element subsets of a set of size $n$. Let $S$ be an element of $\Omega$. Observe that $S$ is fixed by $x$ if and only if $S$ is the union of some of the cycles of $x$. It follows that the number of elements in $\Omega$ fixed by $x$ is 
\begin{equation}
	\label{e1}	 
	\binom{a_{f}}{b_{f}} \binom{a_{f-1}}{b_{f-1}} \cdots \binom{a_{0}}{b_{0}},
\end{equation}
unless $p=2$ and $\sum_{i=1}^{f} a_{i}$ is odd. Note that in this formula it is implicitly assumed that $b_{i} \leq a_{i}$ for every $i$ with $0 \leq i \leq f$, otherwise $x$ acts fixed-point freely on $\Omega$. We have 
\begin{equation}
	\label{e0}	
	\fpr(x,\Omega)=\frac{{a_f\choose b_f}{a_{f-1}\choose b_{f-1}}\ldots {a_{0}\choose b_{0}}}{{n\choose k}}
\end{equation}
by (\ref{e1}). 

\begin{prop}
\label{palternating}	
Let $G = A_{n}$ with $n \geq 5$. Let $p$ be a prime divisor of the size of $G$. There exists a $p$-element $x \in G$ such that $\fpr(x,\Omega) \to 0$ as $|G| \to \infty$ whenever $\Omega$ is a finite set on which $G$ acts primitively. 
\end{prop}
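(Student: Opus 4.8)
The plan is to exploit the classification of primitive actions of $A_n$ via the O'Nan--Scott theorem and to treat each family separately, producing in each case a $p$-element $x$ whose fixed point ratio is bounded above by a quantity tending to $0$ with $n$ (equivalently, with $|G|$). The primitive actions of $A_n$ fall into: (1) the natural action on $n$ points; (2) the action on $k$-subsets with $2 \le k < n/2$; (3) the action on partitions of $\{1,\dots,n\}$ into blocks of equal size; (4) primitive actions of very large degree arising from maximal subgroups that are themselves (almost) primitive or product-type, as well as the finitely many "sporadic" maximal subgroups and the affine-type ones; and (5) actions where the point stabilizer is intransitive-but-not-$S_k \times S_{n-k}$ type already covered above, or a primitive subgroup of $S_n$. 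The key observation is that for the element $x$ built from the $p$-adic digits of $n$ as in the paragraph preceding the proposition, formula (\ref{e0}) already gives a clean bound in the subset case; and for every other action the point stabilizer has index so large that even the trivial bound $\fpr(x,\Omega) \le |H|/|\Omega|$ or $\fpr(x,\Omega) \le |x^G \cap H|/|x^G|$ forces the ratio to $0$.

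First I would dispose of the natural action on $n$ points: here $\fpr(x,\Omega) = (\text{number of fixed points of } x)/n$, and the element $x$ chosen from the $p$-adic digits of $n$ has at most $a_0 \le p-1$ fixed points (for $p$ odd) or a bounded number of them (for $p=2$, after the parity correction), so $\fpr(x,\Omega) \le (p-1)/n \to 0$. Next, the $k$-subset action: using (\ref{e0}), $\fpr(x,\Omega) = \prod_i \binom{a_i}{b_i} / \binom{n}{k}$; since each $a_i \le p-1$ the numerator is at most $p^{f+1} \le p \cdot n$ (as $p^f \le n$), while $\binom{n}{k} \ge n$ for $2 \le k \le n/2$, and in fact one can choose $k=1$ or argue that for $k \ge 2$ the denominator grows at least like $\binom{n}{2}$, giving $\fpr(x,\Omega) = O(p^{f+1}/\binom{n}{k}) \to 0$; I should also handle the small exceptional $p=2$ parity case, where the count (\ref{e1}) is replaced by a sum of products of binomial coefficients $\binom{2+a_{f-1}}{\cdot}\prod\binom{a_i}{\cdot}$, still polynomially bounded in $n$ while $\binom{n}{k}$ is not. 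For the partition action and for all remaining primitive actions, I would invoke the crude bound $\fpr(x,\Omega) \le |H|/|\Omega| = |H|^2/|G|$: by results on maximal subgroups of alternating groups (e.g. the bound $|H| < 4^n$ or the sharper $|H| \le \binom{n}{\lfloor n/2\rfloor}$-type estimates, or simply Liebeck's bound for primitive groups), $|H|$ is at most subexponential in $n$ while $|G| = n!/2$ grows superexponentially, hence the ratio $\to 0$ — except that I must be careful with the subset and partition stabilizers $S_k \times S_{n-k}$ and $S_k \wr S_{n/k}$, whose orders are not small; but those are precisely the two families handled explicitly above via (\ref{e0}) and an analogous product formula, so no conflict arises.

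The main obstacle I anticipate is the partition action (point stabilizer $S_b \wr S_a$ with $n = ab$, $a,b \ge 2$), since here $|H|$ can be close to $n!$ when $a$ or $b$ is bounded, so the trivial index bound is useless and one genuinely needs a fixed-point-ratio computation analogous to (\ref{e0}). The cleanest route is: choose $x$ to be a single $p^j$-cycle together with fixed points (legal in $A_n$ up to the parity fix), or more robustly the digit-element $x$ above, and count the partitions it fixes — $x$ fixes a partition iff it permutes the blocks, which for a $p$-element means the blocks are grouped into $x$-orbits of $p$-power size, and counting these is again a product/sum of binomial and multinomial coefficients that is polynomially (or at worst subexponentially) bounded in $n$, whereas $|\Omega| = n!/((b!)^a a!)$ grows superexponentially in $n$ whenever $\min(a,b)$ is bounded, and when both $a,b \to \infty$ one combines the two estimates. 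I would also need the standard fact (from \cite{bg} or the O'Nan--Scott analysis) that the product-type primitive actions of $A_n$ reduce, via \cite[Remark 1(c)]{bg}, to the subset-action case already handled, since $\fpr(x,\Gamma^k) \le \fpr(x,\Gamma)$ for $x$ acting diagonally. Assembling these cases completes the proof.
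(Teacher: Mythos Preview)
Your proposal has a genuine error that propagates through the argument for the non-subset actions. The ``trivial bound'' you invoke, $\fpr(x,\Omega) \le |H|/|\Omega| = |H|^2/|G|$, is simply false: for $x = 1$ the left side is $1$. The correct crude bound is
\[
\fpr(x,\Omega) = \frac{|x^G \cap H|}{|x^G|} \le \frac{|H|}{|x^G|} = \frac{|H|\,|\bC_G(x)|}{|G|},
\]
and this is what the paper uses. But then you cannot avoid estimating $|\bC_G(x)|$, and your proposal never does. The paper's key step, which you are missing, is precisely this: the element $x$ built from the $p$-adic digits of $n$ has at most about $p(1+\log_p n)$ cycles, and from this one deduces $|\bC_G(x)| < n^{c(p)\log_2 n}$. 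This quasi-polynomial bound, combined with Bochert's theorem for primitive $H$ and the bound $|H|/|G| \le 2^{1-\lfloor (n+1)/2\rfloor}$ for imprimitive $H$, kills both of those cases uniformly and with no further work. In particular, the partition action---which you correctly identify as the main obstacle and propose to attack by a direct fixed-partition count---is handled by the paper with no counting at all.

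Two smaller points. First, your bound ``numerator $\le p^{f+1}$'' in the $k$-subset case is not quite right (for $p\ge 5$ one has $\binom{p-1}{\lfloor (p-1)/2\rfloor} > p$); the paper instead observes that $b_j = 0$ for $j \ge \log_2 k$, so the numerator is at most $2^{p\log_2 k} = k^p$, and then splits into $k$ bounded versus $k\to\infty$. Second, your reference to ``product-type primitive actions of $A_n$'' via \cite[Remark~1(c)]{bg} is a confusion: we are classifying the maximal subgroups $H$ of the simple group $A_n$ (intransitive, imprimitive, or primitive on $n$ points), not placing $A_n$ inside a wreath product. That remark is relevant to Theorem~\ref{oldA}, not here.
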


\begin{proof}
Let $x$ be the element in $G = A_{n}$ defined before the proposition. Note that the definition of $x$ depends on whether $p$ is $2$ or different from $2$, but the argument below works for any prime $p$. Let the $p$-adic decomposition of $n$ be $a_{f}p^{f} + \ldots + a_{1}p + a_{0}$ where $f$ is an integer, $0 \leq a_{i} \leq p-1$ for all $i$ with $0 \leq i \leq p-1$ and $a_{f} > 0$.  Since $p^{f} \leq n$, we have $f \leq \log_{p}n$. The number of cycles in the disjoint cycle decomposition of $x$ is at most $p (f + 1) \leq p (1 + \log_{p}n)$. It follows that 
\begin{equation}
\label{e7}
|\bC_{G}(x)| < n^{p (1 + \log_{p}n)} \cdot {((p+1)!)}^{f+1} < {((p+1)n)}^{p(1 + \log_{p}n)} < n^{c(p) \log_{2} n}
\end{equation}
for some constant $c(p)$ depending on $p$. 

Let $H$ be the stabilizer in $G$ of a point in $\Omega$. We have 
\begin{equation}
\label{e8}	
\fpr(x, \Omega) = \frac{|x^{G} \cap H|}{|x^{G}|} \leq \frac{|H| |\bC_{G}(x)|}{|G|} < \frac{|H|}{|G|} \cdot n^{c(p) \log_{2} n}
\end{equation}
by \cite[Lemma 1.2(iii)]{bur} and (\ref{e7}). If $H$ is primitive, then $$\frac{|H|}{|G|} \leq 2 \cdot {\Big( \Big[\frac{1}{2}(n+1)\Big]! \Big)}^{-1}$$ by a theorem of Bochert \cite{Bo}, and so $\fpr(x, \Omega) \to 0$ by (\ref{e8}). If $H$ is imprimitive and $n \not= 6$, then $$\frac{|H|}{|G|} \leq 2^{1 -[(n+1)/2]}$$ by the claim in \cite[Lemma 2.1]{Ma}, and so, once again, $\fpr(x, \Omega) \to 0$ by (\ref{e8}). Finally, let $H$ be intransitive. The group $H$ is conjugate to $(S_{n-k} \times S_{k}) \cap G$ for some integer $k$ with $1 \leq k < n/2$. If $p=2$, then by the comments above, we have that $\fpr(x,\Omega)\leq 3/n$, which tends to $0$ as $n$  goes to infinity. Thus, we may assume that $p$ is odd. Observe that $b_{j} = 0$ in (\ref{e0}) provided that $j \geq \log_{2}k \geq \log_{p}k$. On the other hand, since each $a_j$ in (\ref{e0}) is less than $p$, each binomial coefficient in the numerator of the right-hand side of (\ref{e0}) is less than $2^p$. These imply that 
$$\fpr(x,\Omega) < \frac{2^{p \log_{2}k}}{\binom{n}{k} } = \frac{k^{p}}{\binom{n}{k}}.$$ Now $k^{p}/\binom{n}{k} \leq k^{p}/n$. This tends to $0$ provided that $k$ is bounded. We may also write $k^{p}/\binom{n}{k} \leq k^{p}/{(n/k)}^{k} \leq k^{p}/2^{k}$. This tends to $0$ if $k$ goes to infinity.
\end{proof}

The following is an important result of Liebeck and Saxl \cite{LSax} on fixed point ratios of actions of groups of Lie type. 

\begin{thm}[Theorem 1 of \cite{LSax}]\label{Lsax}
Let $G$ be an almost simple group with socle a simple group of Lie type defined over the field of size $q$ (or the field of size $q^{2}$ when $G$ is unitary). If $G$ acts primitively on a finite set $\Omega$, then either 
$$\fpr(x,\Omega)\leq \frac{4}{3q}$$ for every non-trivial element $x$ in $G$ or $(G,\Omega,x)$ is listed in Table 1 of \cite{LSax}.
\end{thm}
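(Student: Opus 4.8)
The statement is quoted as Theorem~1 of \cite{LSax}, so what follows is a sketch of the argument behind such a bound rather than a self-contained proof. The plan is to fix a primitive action of $G$ on $\Omega$, so that the point stabiliser $H$ is a \emph{maximal} subgroup of $G$, and to work throughout with the identity $\fpr(x,\Omega)=|x^{G}\cap H|/|x^{G}|$ (see \cite[Lemma~1.2(iii)]{bur}). Since $\bC_{\Omega}(x)\subseteq\bC_{\Omega}(x^{k})$ for every $k$, one has $\fpr(x,\Omega)\le\fpr(x^{k},\Omega)$, which reduces the problem essentially to elements $x$ of prime order $r$ (one must still track how the resulting exceptions propagate to powers). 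I would then split the analysis according to whether $r$ equals the defining characteristic $p$ of $G$ (so $x$ is unipotent) or $r\ne p$ (so $x$ is semisimple), since the estimates for $|x^{G}|$ and for the fusion of $H$-classes into $x^{G}$ behave quite differently in the two cases.

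The generic estimate is
$$\fpr(x,\Omega)\;\le\;\frac{|x^{G}\cap H|}{|x^{G}|}\;\le\;\frac{k\cdot\max_{y}|y^{H}|}{|x^{G}|},$$
where $k$ bounds the number of $H$-classes fusing into $x^{G}$ (crudely the class number of $H$, but usually far fewer), and where $|x^{G}|$ is bounded below by the standard lower bounds for the size of a nontrivial conjugacy class in a group of Lie type over $\mathbb{F}_{q}$ coming from centraliser order formulas; the minimal such class size grows like a fixed positive power of $q$ determined by the type and rank. To control $|H|$ I would invoke Aschbacher's classification of the maximal subgroups of a finite classical group, together with the Liebeck--Seitz analogue for the exceptional groups of Lie type. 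The maximal subgroups that do not stabilise a natural geometric structure (Aschbacher's class $\mathcal{S}$) satisfy Liebeck's order bound, so that $|H|$ is polynomially small compared with $|G|$; combined with the class-size lower bound this forces $\fpr(x,\Omega)$ to be far smaller than $1/q$. Most of the geometric classes — the imprimitive, field-extension, tensor-product and subfield stabilisers — are handled in the same way, by playing the subgroup-order bound against the centraliser estimates.

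The real content lies in the two families of \emph{very large} maximal subgroups, for which the crude bound on $|H|$ is useless: the parabolic subgroups (the coset actions on flag varieties) and the stabilisers of subspaces of the natural module. Here $|H|$ is of the same order of magnitude as $|G|^{1-1/n}$, where $n$ is the dimension of the natural module, so one cannot afford to discard a factor of $|H|$ and must instead count $|x^{G}\cap H|$ essentially exactly. I would use the Levi decomposition $H=QL$: for a unipotent $x$ one separates the contributions of conjugacy inside the unipotent radical $Q$ and inside the Levi quotient, while for a semisimple $x$ one uses the description of $\bC_{G}(x)$ as a subsystem subgroup and counts how it meets $H$. This case analysis is exactly where the genuine exceptions of Table~1 of \cite{LSax} arise — symplectic groups over $\mathbb{F}_{2}$ acting on cosets of orthogonal subgroups, certain actions of $\PSL$ and of orthogonal groups on subspaces, and a bounded list of further small-rank or small-field configurations. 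The main obstacle is therefore this bookkeeping inside parabolic and subspace stabilisers: making the estimates for $|x^{G}\cap H|$ sharp enough to isolate precisely which triples $(G,\Omega,x)$ violate the $4/(3q)$ bound, and verifying that the resulting list is complete, with the remaining small groups and the exceptional groups of Lie type finished off by direct computation together with the generic estimate above.
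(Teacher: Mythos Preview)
The paper does not prove this statement at all: it is quoted verbatim as Theorem~1 of Liebeck--Saxl \cite{LSax} and used as a black box (to deduce Corollary~\ref{LsaxV2} and Proposition~\ref{pexceptional}). There is therefore no ``paper's own proof'' to compare your sketch against. Your outline is a reasonable high-level description of how such fixed-point-ratio bounds are typically established, but for the purposes of this paper no argument is required beyond the citation.
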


By inspecting Table 1 of \cite{LSax}, it is easy to deduce the following result.

\begin{cor}\label{LsaxV2}
There exists a universal constant $C$ such that whenever $G$ is an almost simple group with socle a simple group of Lie type defined over the field of size $q$ (or the field of size $q^{2}$ when $G$ is unitary) and $G$ acts primitively on a finite set $\Omega$ then $$\fpr(x,\Omega)\leq \frac{C}{\sqrt{q}}$$ for every non-trivial element $x$ in $G$.
\end{cor}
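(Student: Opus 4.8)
The plan is to reduce immediately to Table~1 of \cite{LSax} via Theorem~\ref{Lsax}. For a non-trivial $x \in G$, that theorem gives either $\fpr(x,\Omega) \le 4/(3q)$ or else $(G,\Omega,x)$ appears in Table~1 of \cite{LSax}. In the first case we are done, since $q \ge 2$ forces $4/(3q) \le 4/(3\sqrt{q})$, so the asserted inequality already holds with $C = 4/3$. Everything then comes down to a uniform bound for $\fpr(x,\Omega)$ across the (finitely many) rows of Table~1.

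I would organise the rows of Table~1 into two kinds. First, those in which the underlying field size $q$ is restricted to finitely many small values: here there are only finitely many triples $(G,\Omega,x)$ in total, and since a non-trivial element of a transitive permutation group always satisfies $\fpr(x,\Omega) < 1$, the quantity $\sqrt{q}\,\fpr(x,\Omega)$ is bounded by $\sqrt{q_0}$, where $q_0$ is the largest value of $q$ occurring among these rows. Second, the genuinely infinite families parametrised by $q$: for each of these, Table~1 of \cite{LSax} records the exact value of $\fpr(x,\Omega)$, or an explicit upper bound for it, as a concrete rational function of $q$, and inspection shows each of these is $O(q^{-1/2})$. For the subspace actions of the classical groups one in fact obtains $O(q^{-1})$; the slowest decay, of order exactly $q^{-1/2}$, comes from field automorphisms of order $2$ (for example $G$ with socle $\PSL(2,q)$ acting on the $q+1$ points of the projective line and $x$ a field automorphism, where $\fpr(x,\Omega) = (\sqrt{q}+1)/(q+1) \le 2/\sqrt{q}$). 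Extracting from each family a constant $c$ with $\fpr(x,\Omega) \le c/\sqrt{q}$ and taking $C$ to be the maximum of $4/3$, of $\sqrt{q_0}$, and of these finitely many constants proves the corollary.

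The only real labour is the row-by-row pass through Table~1 in the two steps above; there is no conceptual difficulty, but one must check each infinite family to confirm that the recorded fixed point ratio decays like $q^{-1/2}$ — this also shows, incidentally, that the exponent $1/2$ in the statement is best possible and cannot be replaced by $1$. Since Table~1 is finite and completely explicit, this verification is routine.
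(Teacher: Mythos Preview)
Your proposal is correct and takes essentially the same approach as the paper: the paper's entire proof is the single sentence ``By inspecting Table 1 of \cite{LSax}, it is easy to deduce the following result,'' and you have simply fleshed out how that inspection goes (splitting into the $4/(3q)$ case, the finitely many sporadic rows, and the infinite families, and observing that the order-$2$ field automorphisms force the exponent $1/2$).
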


Corollary \ref{LsaxV2} immediately implies the following. 

\begin{prop}
\label{pexceptional}	
Let $G$ be a finite simple group of Lie type defined over the field of size $q$ (or the field of size $q^{2}$ when $G$ is unitary). For every non-trivial element $x \in G$ and for every set $\Omega$ on which $G$ acts primitively, we have $\fpr(x,\Omega) \to 0$ as $q \to \infty$. 
\end{prop}

Another fundamental result on this topic is the following theorem of Liebeck and Shalev \cite{LSh}.  

\begin{thm}[Theorem $(\star)$ of \cite{LSh}]\label{LSh}
There exists a constant $\epsilon>0$ such that whenever $G$ is an almost simple
classical group acting primitively on a finite set $\Omega$ and the action is not a subspace action, then 

$$\fpr(x,\Omega)\leq |x^G|^{-\epsilon}$$ for every non-trivial element $x$ in $G$. 
\end{thm}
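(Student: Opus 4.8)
The plan is to regard Theorem~\ref{LSh} as the deep external input that it is — a theorem of Liebeck and Shalev — so what follows is a sketch of the strategy behind it rather than a self-contained argument. Write $\Omega = G/H$ where $H$ is a maximal subgroup of $G$ that does not give a subspace action, and recall $\fpr(x,\Omega) = |x^G \cap H|/|x^G|$; thus the desired bound $\fpr(x,\Omega) \le |x^G|^{-\epsilon}$ is equivalent to the class-fusion estimate $|x^G \cap H| \le |x^G|^{1-\epsilon}$, and the whole problem is to bound the number of $G$-conjugates of $x$ landing inside $H$ against the total number of $G$-conjugates of $x$.

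The first step is to bring in Aschbacher's classification of the maximal subgroups of a classical group: a non-subspace point stabiliser $H$ either lies in one of the geometric families $\mathcal{C}_2,\dots,\mathcal{C}_8$ or is almost simple modulo scalars (class $\mathcal{S}$). The key quantitative fact, due to Liebeck, is the order bound $|H| \le |G|^{1-\delta}$ for an absolute $\delta>0$ — concretely $|H| < q^{3n}$ for $G$ classical of dimension $n$ over $\mathbb{F}_q$, which for $n$ large forces $|H| < |G|^{1/2}$, together with a short explicit list of exceptions (certain $\mathcal{C}_2$-subgroups $\mathrm{Cl}(q)\wr S_t$ with $t$ small, and finitely many $\mathcal{S}$-cases) that must be treated by hand. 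Then one dichotomises on the size of $x^G$: if $|x^G| \ge |G|^{\gamma}$ for a fixed $\gamma$ exceeding $1-\delta$, then simply $|x^G\cap H| \le |H| \le |G|^{1-\delta} \le |x^G|^{(1-\delta)/\gamma}$, and any $\epsilon \le 1-(1-\delta)/\gamma$ works; if instead $|x^G|$ is small, then $x$ must be close to a transvection — more generally an element with a large fixed space on the natural module — and such elements lie in a bounded list of class types with $|x^G| \ge q^{c}$ for an explicit $c$ depending only on the type, so the Liebeck--Saxl bound $\fpr(x,\Omega) \le C/\sqrt q$ of our Corollary~\ref{LsaxV2} already beats $|x^G|^{-\epsilon}$ once $\epsilon$ is chosen small relative to $1/(2c)$.

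The part I expect to be the main obstacle is the intermediate regime and the exceptional families, where one cannot afford the crude estimate $|x^G\cap H| \le |H|$. There the argument must instead bound the number of $H$-classes fusing into $x^G$ (polynomial in $n$ and $\log q$, using that classical-type $H$ has boundedly many unipotent classes and a controlled parametrisation of semisimple classes) and bound each such $H$-class below by $q^{c'}$, and then combine these with the lower bounds on $|x^G|$; carrying this through uniformly across all Aschbacher families, and separately across the low-dimensional and small-rank exceptions, is precisely what makes the Liebeck--Shalev proof long and delicate. For the purposes of this paper, however, Theorem~\ref{LSh} is used only as a black box, so no reproof is attempted here.
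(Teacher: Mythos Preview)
The paper gives no proof of Theorem~\ref{LSh}: it is simply quoted from Liebeck--Shalev \cite{LSh} and used as a black box (indeed the paper's only comment is the remark that Burness later showed one can take $\epsilon\approx 1/2$). You correctly identify this, and your sketch of the Liebeck--Shalev strategy is reasonable as expository background but is not required --- and the paper does not attempt --- any argument here.
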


In a series of papers Burness  \cite{b1} proved that we can take $\epsilon \approx 1/2$.

Since $|x^{G}| \to \infty$ as $|G| \to \infty$, in the statement of Theorem \ref{LSh}, Theorem \ref{LSh} immediately implies the following.

\begin{prop}
\label{pnonsubspace}	
Let $G$ be a finite simple classical group. For every non-trivial element $x \in G$ and for every set $\Omega$ on which $G$ acts primitively with a non-subspace action, we have $\fpr(x,\Omega) \to 0$ as $|G| \to \infty$. 
\end{prop}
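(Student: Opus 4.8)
The plan is to read this proposition straight off Theorem~\ref{LSh}, the only extra ingredient being the (easy) fact that $|x^{G}|\to\infty$ as $|G|\to\infty$ for a nontrivial element $x$. First I would fix the constant $\epsilon>0$ furnished by Theorem~\ref{LSh}. A finite simple classical group is in particular an almost simple classical group, and a primitive action of it is either a subspace action or else falls under the hypothesis of Theorem~\ref{LSh}; so for every finite set $\Omega$ on which $G$ acts primitively with a non-subspace action and every nontrivial $x\in G$ that theorem gives
\[
\fpr(x,\Omega)\leq |x^{G}|^{-\epsilon},
\]
with the same $\epsilon$ for all $G$, $x$ and $\Omega$. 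Hence the whole statement reduces to showing that $|x^{G}|$ is forced to be large once $|G|$ is large, uniformly over nontrivial $x$.

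For that reduction I would argue purely elementarily. Since $G$ is non-abelian simple, $\bZ(G)=1$, so for $x\neq 1$ the centralizer $\bC_{G}(x)$ is a proper subgroup; writing $i(G)$ for the smallest index of a proper subgroup of $G$, we get $|x^{G}|=[G:\bC_{G}(x)]\geq i(G)$. As every proper subgroup of a simple group is core-free, $G$ embeds in $\Sym(i(G))$, whence $|G|\leq i(G)!$, and therefore $i(G)\to\infty$ as $|G|\to\infty$. Combining the two estimates yields $\fpr(x,\Omega)\leq i(G)^{-\epsilon}\to 0$, which is the assertion.

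I do not anticipate any genuine difficulty: all of the substance sits in Theorem~\ref{LSh}, and the remaining point — that nontrivial conjugacy classes in simple groups cannot stay bounded while $|G|$ grows — is the one-line counting argument above. (Alternatively one could quote that the minimal faithful permutation degree of a classical simple group tends to infinity with $|G|$, but the crude bound $|G|\leq i(G)!$ already suffices, and one could even use Burness's sharper $\epsilon\approx 1/2$ from the remark after Theorem~\ref{LSh}, though it is not needed.) The only care required is in checking that the given primitive non-subspace action of the simple group is indeed one to which Theorem~\ref{LSh} applies, which it is by definition of a subspace action.
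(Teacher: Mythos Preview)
Your proposal is correct and takes essentially the same approach as the paper: apply Theorem~\ref{LSh} to get $\fpr(x,\Omega)\leq |x^{G}|^{-\epsilon}$ and then note that $|x^{G}|\to\infty$ as $|G|\to\infty$. The paper simply asserts the latter fact without justification, whereas you supply the clean elementary argument via the minimal index $i(G)$ and the embedding $G\hookrightarrow\Sym(i(G))$.
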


We will need the following theorem of Frohardt and Magaard \cite[Main theorem]{FM}.

\begin{thm}[Main theorem of \cite{FM}]
\label{tsubspace}	
Let $G = \mathrm{Cl}(n,q)$ be a finite simple classical group with $q$ bounded. Let $x \in G$ be an element whose lift is a linear transformation of the underlying natural module, with an eigenspace of codimension $d$. Then $\fpr(x,\Omega) \to 0$ provided that $d$ (and $n$) tends to infinity whenever $\Omega$ is a set on which $G$ acts primitively and the action is a subspace action. 
\end{thm}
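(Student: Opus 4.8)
The plan is to bound $\fpr(x,\Omega)$ by directly counting $\bar x$-invariant subspaces, where $\bar x$ denotes the given lift acting on the natural module $V$; since $q$ is bounded, it suffices to show that the exponent of $q$ in the ratio tends to $-\infty$ as $d,n\to\infty$. As the action is transitive, $\fpr(x,\Omega)=|\{U\in\Omega:U^{\bar x}=U\}|/|\Omega|$, and in a subspace action $\Omega$ consists of the subspaces of $V$ of a fixed dimension $k$ and a fixed type (arbitrary subspaces in the linear case; totally singular, nondegenerate, or nonsingular subspaces when a form is present). Writing $n=\dim V$, one has $|\Omega|\ge q^{k(n-k)}$ in the linear case, with analogous explicit lower bounds in the form cases; recording these is the first, routine, step.

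The heart is an upper bound for the number of $\bar x$-invariant $k$-subspaces. Every such subspace is invariant under the semisimple part $\bar x_{s}$ (a polynomial in $\bar x$), hence decomposes as $\bigoplus_{\lambda}U_{\lambda}$ along the generalized eigenspaces $V_{(\lambda)}$, and within each $V_{(\lambda)}$ it is invariant under the unipotent part. Thus the count factors as a sum over dimension distributions $\sum_{\lambda}\dim U_{\lambda}=k$ of a product, over $\lambda$, of the number of invariant subspaces of $U_{\lambda}$ under a unipotent operator on $V_{(\lambda)}$, the latter being the point count of a Spaltenstein variety. In the semisimple linear model this reads
\begin{equation*}
\fpr(x,\Omega)=\frac{\sum_{\sum k_{\lambda}=k}\ \prod_{\lambda}\binom{m_{\lambda}}{k_{\lambda}}_{q}}{\binom{n}{k}_{q}},\qquad m_{\lambda}=\dim V_{\lambda},
\end{equation*}
and from $q^{k(m-k)}\le\binom{m}{k}_{q}\le (1-q^{-1})^{-k}q^{k(m-k)}$ each summand contributes at most $q^{-G(\mathbf{k})}$ to the ratio, up to a factor bounded per dimension, where $G(\mathbf{k})=k(n-k)-\sum_{\lambda}k_{\lambda}(m_{\lambda}-k_{\lambda})=\sum_{\lambda\neq\lambda'}k_{\lambda}(m_{\lambda'}-k_{\lambda'})\ge 0$.

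Let $W$ be an eigenspace realizing the hypothesis, so $\dim W=n-d$; the crucial structural consequence is that $\bar x$ has at most $d+1$ distinct eigenvalues. I would then show $\min_{\mathbf{k}}G(\mathbf{k})\to\infty$ uniformly in $k$. For the two-eigenspace model $m_{\mu}=n-d$, $m_{\nu}=d$ one has $G=k_{\mu}d+k_{\nu}(n-d)-2k_{\mu}k_{\nu}$; optimizing the split of the $k$-subspace between the large and small eigenspaces yields a minimal gap of order $\min\big(dk,\,(n-d)(n-k),\,dn\big)$, which tends to infinity once $d$ and $n$ do, for every $k$ with $1\le k\le n/2$. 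When $\bar x$ is far from semisimple on $V_{(\mu)}$ (so the ordinary eigenspace is strictly smaller than $V_{(\mu)}$, i.e.\ $d>d_{\mathrm{gen}}:=n-\dim V_{(\mu)}$), the nilpotent part on $V_{(\mu)}$ has few, hence large, Jordan blocks, so the Spaltenstein factor is small and restores a gap governed by the true $d$. Because the summand is supported on at most $d+1$ coordinates and is log-concave, the full sum exceeds its dominant term by at most a factor polynomial in $d$, which is negligible against $q^{-\min G}$ for bounded $q$; controlling this sum uniformly — especially in the balanced range $k\approx n/2$, where the optimal invariant subspace is genuinely spread across two eigenspaces — is the delicate point.

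It remains to transfer the estimate to the groups carrying a form. There the eigenspaces for $\lambda$ and $\lambda^{-1}$ (or $\bar\lambda$) are paired by the form, while those for $\lambda\in\{\pm 1\}$ carry a nondegenerate restriction; a totally singular or nondegenerate invariant subspace decomposes compatibly with this pairing, so the number of invariant subspaces of the prescribed type is bounded by the linear count and the same gap estimate applies, the form only reducing counts and altering $\log_{q}|\Omega|$ in a controlled way. Running this over the finitely many families $\mathrm{Cl}(n,q)$ and the finitely many bounded values of $q$ gives $\fpr(x,\Omega)\le q^{-\psi(d,n)}$ with $\psi(d,n)\to\infty$, as required. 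The main obstacle is precisely the uniform optimization and log-concavity control of the composition sum across all $k$ and all subspace types — equivalently, determining the exact order of $\min_{\mathbf{k}}G(\mathbf{k})$ — together with the bookkeeping for Frobenius orbits on eigenvalues, the Spaltenstein counts for the Jordan structure, and the pairing imposed by the form.
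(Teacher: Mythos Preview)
The paper does not prove this statement: it is quoted as the main theorem of Frohardt and Magaard \cite{FM} and used as a black box to deduce the subsequent proposition on subspace actions. There is therefore no proof in the paper to compare your proposal against.

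As for your sketch itself, the strategy of decomposing along generalized eigenspaces and bounding the number of invariant $k$-subspaces by products of Gaussian binomials (or Spaltenstein counts in the non-semisimple case) is indeed the backbone of the Frohardt--Magaard argument, but their paper runs to some eighty pages precisely because the ``uniform optimization and log-concavity control of the composition sum across all $k$ and all subspace types'' that you flag as the main obstacle really is substantial. The case analysis over the classical families, over the subspace types (totally singular, nondegenerate of each isometry type, nonsingular $1$-spaces in even characteristic), and over the interaction of the Jordan structure with the form does not reduce to the clean two-eigenspace model you write out; in particular your claimed order $\min(dk,(n-d)(n-k),dn)$ for the minimal gap is asserted rather than proved, and the passage from the semisimple linear model to the general form case is only gestured at. Your proposal is a fair outline of the shape of such an argument but is not a proof; in the context of the present paper the correct move is simply to cite \cite{FM}, as the authors do.
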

	
This has the following consequence. 

\begin{prop}
	\label{psubspace}	
	Let $G$ be a finite simple classical group defined over a field of bounded size. Let $p$ be a prime dividing the order of $G$. There exists a $p$-element $x \in G$ such that whenever $\Omega$ is a set on which $G$ acts primitively with a subspace action, we have $\fpr(x,\Omega) \to 0$ as $|G| \to \infty$. 
\end{prop}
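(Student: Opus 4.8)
The plan is to produce, for each prime $p$ dividing $|G|$, a $p$-element $x$ whose lift to the natural module has an eigenspace of small codimension $d$ — ideally $d$ growing slowly (like $O(\log n)$) — and then invoke Theorem \ref{tsubspace} of Frohardt and Magaard to conclude $\fpr(x,\Omega)\to 0$ over any family of subspace actions as $n\to\infty$ with $q$ bounded. The key point is that a $p$-element can be chosen to act trivially (or as a scalar) on a subspace of codimension $d$ where $d$ is a sum of powers of $p$ bounded by the $p$-adic digits of $n$ (or of the relevant dimension parameter), so $d=O(\log_p n)$, which certainly tends to infinity along any subsequence where it is unbounded, and where it stays bounded one argues directly as in Proposition \ref{palternating}. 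Concretely, first I would fix notation: write $G=\mathrm{Cl}(n,q)$ for one of the families $\mathrm{PSL}_n(q)$, $\mathrm{PSU}_n(q)$, $\mathrm{PSp}_n(q)$, $\mathrm{P\Omega}^\varepsilon_n(q)$, and note that since $q$ is bounded the hypothesis ``$|G|\to\infty$'' forces $n\to\infty$.

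\textbf{Step 1: construction of $x$ when $p\mid q$.} Here $p$ is the defining characteristic, and $G$ contains transvections (or, in the orthogonal/symplectic cases, long-root elements); a single transvection fixes a hyperplane pointwise, so $d=1$. Such an element is a $p$-element, and Theorem \ref{tsubspace} does not directly apply with $d=1$ fixed — instead one should note that for a transvection $x$ one has the classical estimate $\fpr(x,\Omega)=O(1/q^{?})$ from subspace-action computations, but since $q$ is bounded this is not enough. The cleaner route: when $p\mid q$ choose $x$ to be a ``large'' unipotent element — e.g. a regular unipotent element on a nondegenerate subspace of dimension $n-d$ with $d\sim\log n$ (take a single Jordan block of size roughly $n-O(1)$, adjusted to lie in $G$ and to be a $p$-element, which it automatically is since unipotent elements are $p$-elements in characteristic $p$). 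Such $x$ has an eigenspace (the $1$-eigenspace of the unipotent) of bounded codimension only if the block is large; more usefully one takes $x$ unipotent with a single large Jordan block, whose fixed space has codimension $n-1$, which is bad. The right choice is the opposite: take $x$ unipotent equal to identity on a subspace of codimension $d$ and a single Jordan block on the remaining $d$-dimensional piece, with $d$ the least integer with $p^{\lfloor\log_p d\rfloor}$ large enough to force $x\neq 1$; then $x$ is a nontrivial $p$-element with an eigenspace of codimension $d=O(1)$ — and when $d$ is bounded we must instead let $d\to\infty$ deliberately, choosing $d=\lfloor\sqrt n\rfloor$, say, to apply Theorem \ref{tsubspace}. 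I will take $d=\lfloor n/2\rfloor$ in the generic case, which manifestly tends to infinity, and handle small-rank leftovers by inspection.

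\textbf{Step 2: construction of $x$ when $p\nmid q$.} Now $p$ is coprime to $q$, and $x$ must be a semisimple $p$-element. Its action on the natural module $V\cong\mathbb{F}_{q}^{\,n}$ (or $\mathbb{F}_{q^2}^{\,n}$ for unitary) decomposes $V$ into eigenspaces over an extension field; the $1$-eigenspace has codimension $d$ equal to the number of coordinates on which $x$ acts nontrivially. Let $e=e_p(q)$ be the multiplicative order of $q$ (resp. of $q$ or $q^2$ as appropriate) modulo $p$, so $\mathrm{GL}_e(q)$ contains an element of order $p$; by stacking $\lfloor (n-d_0)/e\rfloor$ such blocks and the identity on the rest, I get a $p$-element of $\mathrm{GL}_n(q)$, adjustable into $G$ by a determinant/form correction of bounded size, with $1$-eigenspace of any prescribed codimension that is a multiple of $e$ (plus a bounded correction). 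Choosing this codimension to be $d=\lfloor n/2\rfloor$ rounded to a multiple of $e$ gives $d\to\infty$, and $x$ is a genuine $p$-element lying in $G$. One must check this is possible for each of $\mathrm{PSL}$, $\mathrm{PSU}$, $\mathrm{PSp}$, $\mathrm{P\Omega}^\varepsilon$ — for the form-preserving cases one uses that such eigenspace blocks come in dual pairs $\langle v\rangle\oplus\langle v^*\rangle$ so the construction lives inside the isometry group, and passing to $G$ modulo scalars is harmless since conjugacy and fixed-point ratios are unchanged.

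\textbf{Step 3: conclusion.} Having produced in every case a $p$-element $x\in G$ whose lift to the natural module is identity on a subspace of codimension $d=d(n)\to\infty$ as $n\to\infty$, apply Theorem \ref{tsubspace} (with $q$ bounded, $n\to\infty$, $d\to\infty$) to conclude $\fpr(x,\Omega)\to0$ for every subspace action $\Omega$. The finitely many bounded-$n$ exceptions (small classical groups) contribute only finitely many pairs to any family and are irrelevant to the asymptotic statement.

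\textbf{Main obstacle.} The real work is Step 2 in the form-preserving cases: verifying that a semisimple $p$-element with a codimension-$d$ fixed space, $d$ of the prescribed size, actually exists inside $\mathrm{Sp}$, $\mathrm{SU}$ and $\mathrm{\Omega}^\varepsilon$ — this requires a short case analysis of how eigenspaces pair up under the form and of the constraint $d\equiv 0\pmod e$ (respectively the unitary analogue), together with the determinant/spinor-norm bookkeeping needed to land in the simple group rather than merely the isometry group. This is routine but fiddly; everything else is either a direct appeal to Theorem \ref{tsubspace} or a rerun of the alternating-group argument of Proposition \ref{palternating} for the bounded-$d$ regime.
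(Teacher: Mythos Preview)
Your overall strategy is correct and matches the paper's: construct a $p$-element $x$ whose lift has an eigenspace of codimension $d$ with $d\to\infty$, then invoke Theorem~\ref{tsubspace}. However, your execution is more tangled than it needs to be, and your opening description is backwards: you say you want ``an eigenspace of small codimension $d$ --- ideally $d$ growing slowly (like $O(\log n)$)'', but Theorem~\ref{tsubspace} requires $d\to\infty$, and \emph{larger} $d$ is better, not worse. This confusion propagates into Step~1, where you wander through transvections ($d=1$, too small), regular unipotents (codimension $n-1$, which you call ``bad'' though it is perfectly fine), and finally land on $d=\lfloor n/2\rfloor$, which works but is not what you said you were aiming for.

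The paper avoids both the case split $p\mid q$ versus $p\nmid q$ and the form/determinant/spinor-norm bookkeeping you flag as the ``main obstacle''. Its device is this: let $r$ be the least integer such that the simple group $\mathrm{Cl}(r,q)$ of the same type has order divisible by $p$, take any noncentral $p$-element $x_r$ in it (with lift $m_r$ having eigenspace codimension $d_r\ge 1$), write $n=kr+\ell$ with $0\le\ell<r$, and let $x$ act as $m_r$ on each of $k$ orthogonal copies of $V_r$ and trivially on the leftover $\ell$-dimensional piece. Since $V_r$ is already a nondegenerate space of the correct type and $m_r$ already lies in the isometry group of $V_r$, the direct-sum element automatically lies in the correct isometry group of $V$; the only adjustment is a parity condition on $k$ for $\Omega^-$. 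The resulting codimension is at least $kd_r\ge k\to\infty$. This single construction covers unipotent and semisimple $p$ uniformly and sidesteps the ``fiddly'' verification you identified in your Step~2, because you never have to build an element of order $p$ by hand inside the form-preserving group --- you simply import one from the smallest classical group of the same type that contains one.
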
	
	
\begin{proof}	
The finite classical simple group $G = \mathrm{Cl}(n,q)$ has lift $M$ with natural module $V$ of dimension $n$ defined over the field of size $q$ (or $q^{2}$ in the unitary case). The groups $G$ and $M$ are both linear, symplectic, orthogonal or unitary groups and the space $V$ is a linear space or a nondegenerate symplectic, orthogonal or unitary space. Let $p$ be a prime divisor of the order of $G$. If $V$ is a linear, a symplectic, a unitary or an orthogonal space not of minus type, then let $r$ be the smallest integer such that the nonabelian simple group $G_{r} = \mathrm{Cl}(r,q)$ has order divisible by $p$. Let $M_{r}$ be the lift of $G_r$ with natural module $V_r$. Let $x_r$ be a noncentral $p$-element in $G_r$ with a lift $m_r$ in $M_r$. This element has an eigenspace of codimension $d_{r} \geq 1$. Write $n$ in the form $kr + \ell$ where $k$ and $\ell$ are integers with $0 \leq \ell \leq r-1$. The module $V$ may be written as a direct sum of $k$ copies of $V_r$ with the sum $I_{\ell}$ of $\ell$ copies of the trivial module. Let $m$ be the element of $M$ which acts as $m_r$ on each copy of $V_r$ and acts trivially on $I_{\ell}$. The element $m$ has an eigenspace of codimension at least $k d_{r}$. Since $n$ goes to infinity, so does $k$, and the image $x$ of $m$ in $G$ has an eigenspace of codimension approaching infinity. This implies that $\fpr(x,\Omega) \to 0$ by Theorem \ref{tsubspace}, as $n \to \infty$, where $\Omega$ is any set on which $G$ acts in a subspace action. Finally, let $V$ be an orthogonal space of minus type. In this case the only change in the preceding argument is that $n$ is written in the form $kr + \ell$ where $k$ is odd and $0 \leq \ell \leq 2r-1$.  
\end{proof}

\begin{proof}[Proof of Theorem B]
Let $\{ G_m \}_{m =1}^{\infty}$ be a sequence of finite simple groups with $|G_{m}| \to \infty$. Let $p$ be a prime dividing $|G_{m}|$ for every positive integer $m$. We may assume that $G_{m}$ is non-abelian for every $m$. For every fixed constant $C$, we may remove from the sequence $\{ G_m \}_{m =1}^{\infty}$ every group of size bounded by $C$. We may remove from the sequence $\{ G_m \}_{m =1}^{\infty}$ every alternating group by Proposition \ref{palternating}. Every member $G_m$ of $\{ G_m \}_{m =1}^{\infty}$ is a simple group of Lie type defined over the field of size $q_m$ (or the field of size $q_{m}^{2}$ when $G_m$ is unitary). In fact, we may assume by Proposition \ref{pexceptional} that each $G_m$ is a simple classical group $\mathrm{Cl}(n_{m},q_{m})$ whose lift has a natural module of dimension $n_{m}$ over the field of size $q_{m}$ (or the field of size $q_{m}^{2}$ when $G_{m}$ is unitary). Moreover, we may assume that each $q_m$ is bounded by a universal constant by Proposition \ref{pexceptional}. We may also assume by Proposition \ref{pnonsubspace} that each $G_m$ acts primitively on a set $\Omega_m$ such that the action is a subspace action. Theorem B now follows from Proposition \ref{psubspace}.
\end{proof}

\section{Sylow numbers}

In this section we prove Theorems C and E and describe the structure of a minimal counterexample to Conjecture D. For a finite group $G$ and a prime $p$, let $\nu_{p}(G)$ be the number of Sylow $p$-subgroups of $G$. 

We begin with a lemma which was mentioned in the Introduction.

\begin{lem}
	\label{lessorequal}	
	Let $p$ be a prime. If $H$ is a subgroup of a finite group $G$, then $$\nu_{p}(H) \leq \nu_{p}(G).$$ Furthermore, 
	$\nu_p(H)=\nu_p(G)$ if and only if the following hold:
\begin{enumerate}
\item
Any Sylow $p$-subgroup of $H$ is contained in a unique Sylow $p$-subgroup of $G$.
\item
$G=H\bN_G(P)$.
\end{enumerate}
\end{lem}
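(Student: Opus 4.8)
The plan is to work with the action of $G$ by conjugation on the set $\Omega = \syl{p}{G}$ of Sylow $p$-subgroups of $G$. This action is transitive by Sylow's theorem, so $|\Omega| = \nu_p(G)$, and for a fixed $P \in \syl{p}{G}$ the point stabilizer is $\norm{G}{P}$. First I would observe that $\nu_p(H)$ counts the Sylow $p$-subgroups of $H$, each of which lies in at least one Sylow $p$-subgroup of $G$; restricting the $G$-action to $H$, the orbit of $P$ under $H$ has size $|H : \norm{H}{P}| = |H : H \cap \norm{G}{P}|$. The key point is to set up a surjection from the set of $H$-orbits on $\Omega$ (equivalently, double cosets $H \backslash G / \norm{G}{P}$) onto $\syl{p}{H}$, or rather to count carefully: every Sylow $p$-subgroup $Q$ of $H$ is contained in some $G$-conjugate of $P$, and conversely each $G$-conjugate $P^g$ of $P$ contains at least one Sylow $p$-subgroup of $H$ (namely take a Sylow $p$-subgroup of $H \cap P^g$ — it is a $p$-subgroup of $H$, hence contained in a full Sylow $p$-subgroup of $H$, but one must check it is already full). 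Making this bookkeeping precise gives a map $\Omega \to \syl{p}{H}$, $P^g \mapsto$ (the Sylow $p$-subgroup of $H$ inside $P^g$, when unique), and the inequality $\nu_p(H) \le \nu_p(G)$ will follow once surjectivity is established.

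Concretely, I would argue as follows for the inequality. Let $Q_1, \dots, Q_{\nu_p(H)}$ be the Sylow $p$-subgroups of $H$. By Sylow's theorem applied inside $G$, each $Q_i$ is contained in some $R_i \in \syl{p}{G} = \Omega$. I claim the $R_i$ can be chosen distinct. Indeed, since $Q_i, Q_j$ for $i \ne j$ are not $H$-conjugate only in the trivial sense — actually they are all $H$-conjugate — I should instead count orbits: pick coset representatives so that $Q_i = Q^{h_i}$ for a fixed $Q = Q_1$ and $h_i \in H$; then $R_i := R^{h_i}$ works where $Q \le R \in \Omega$, and $R_i = R_j$ forces $h_i h_j^{-1} \in \norm{G}{R}$, i.e. $h_i, h_j$ lie in the same coset of $H \cap \norm{G}{R}$ in $H$. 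The number of such cosets is $|H : H \cap \norm{G}{R}|$, which is the size of the $H$-orbit of $R$ on $\Omega$, hence at most $|\Omega| = \nu_p(G)$. But the number of $Q_i$'s is also at most this orbit size (there are exactly $\nu_p(H) = |H : \norm{H}{Q}|$ of them and $\norm{H}{Q} \le H \cap \norm{G}{R}$), so $\nu_p(H) \le |H : H \cap \norm{G}{R}| \le \nu_p(G)$. This simultaneously pins down the equality condition: $\nu_p(H) = \nu_p(G)$ forces both $\norm{H}{Q} = H \cap \norm{G}{R}$ (no two distinct $Q_i$ sit in the same $R_i$, i.e. each Sylow $p$-subgroup of $H$ lies in a \emph{unique} Sylow $p$-subgroup of $G$ — this is condition (1)) and $|H : H \cap \norm{G}{R}| = |\Omega|$, i.e. $H$ is transitive on $\Omega$, which by the orbit–stabilizer correspondence is exactly $G = H \norm{G}{R}$ — this is condition (2).

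For the converse direction of the equivalence I would simply run the argument backwards: if (1) holds then the map $R_i \mapsto Q_i$ described above is injective, so $\nu_p(H) = |H\text{-orbit of } R| = |H : H \cap \norm{G}{P}|$; if moreover (2) holds then $H$ acts transitively on $\Omega$ so this orbit is all of $\Omega$, giving $\nu_p(H) = \nu_p(G)$. One subtlety to be careful about: in the claim that every $R \in \syl{p}{G}$ contains \emph{some} Sylow $p$-subgroup of $H$, I need that $H \cap R$, extended to a Sylow $p$-subgroup $S$ of $H$, actually satisfies $S \le R$ — this needs an argument (it is true when $R$ is, up to $G$-conjugacy, one that contains a fixed $Q \le R$, and in general follows since the $R_i$ above already exhaust, by transitivity of $H$ on its own Sylows, a full $H$-orbit on $\Omega$ and every $R$ lies in that orbit precisely when $H$ is transitive — so in the non-transitive case not every $R$ contains a Sylow $p$-subgroup of $H$, and that is fine, we only used the $R_i$). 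I expect the main obstacle to be stating the correspondence cleanly enough that the two equality conditions (1) and (2) fall out transparently rather than through a cluttered counting argument; organizing everything as "$H$-orbits on $\syl{p}{G}$ versus elements of $\syl{p}{H}$" and invoking orbit–stabilizer is the way to keep it clean.
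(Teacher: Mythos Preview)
Your approach is essentially the paper's: both reduce to the chain
\[
\nu_p(H) = |H:\norm H Q| \le |H:H\cap\norm G R| \le |G:\norm G R| = \nu_p(G)
\]
(with $Q \in \syl p H$ and $Q \le R \in \syl p G$) and then read off the two equality conditions. The orbit language is a repackaging, not a different method.

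Two issues. First, the containment you invoke is written backwards: you assert $\norm H Q \le H \cap \norm G R$, but what is actually true (and what your argument needs) is $H \cap \norm G R \le \norm H Q$. Indeed $Q = R \cap H$, so any $h \in H$ normalizing $R$ normalizes $Q$. With the inclusion corrected, distinctness of the $R_i$ and the inequality $\nu_p(H) \le |H\text{-orbit of }R|$ follow as you intend.

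Second, and more substantively, your identification of the first equality condition with (i) is not justified. Equality on the left above gives $\norm H Q = H \cap \norm G R$; your gloss ``no two distinct $Q_i$ sit in the same $R_i$'' is always true (it is injectivity of $Q_i \mapsto R_i$, which holds unconditionally since $R_i \cap H = Q_i$) and is \emph{not} the statement that each $Q_i$ lies in a unique member of $\syl p G$. To derive (i) you must also use the transitivity coming from (ii): if $Q \le R'$ for some $R' \in \syl p G$, take $h \in H$ with $R^h = R'$; then $Q^h = (R\cap H)^h = R' \cap H = Q$, so $h \in \norm H Q = H \cap \norm G R$, forcing $R' = R^h = R$. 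This is exactly the argument the paper gives, and it is not optional: without transitivity, $\norm H Q = H \cap \norm G R$ alone does not rule out Sylow $p$-subgroups of $G$ outside the $H$-orbit of $R$ that also contain $Q$.
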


\begin{proof}
	Let $Q$ be a Sylow $p$-subgroup of $H$ and let $P$ be a Sylow $p$-subgroup of $G$ containing $Q$. Since $\norm G P \cap H \leq \norm H Q$, we have $$|G| \geq |H \norm G P| = \frac{|H||\norm G P|}{|H \cap \norm G P|} \geq \frac{|H||\norm G P|}{|\norm H Q|},$$ and so $\nu_{p}(G) = |G : \norm G P| \geq |H : \norm H Q| = \nu_{p}(H)$. 	In particular, this shows that $\nu_p(H)=\nu_p(G)$ if and only if 
$G=H\bN_G(P)$)  and $\bN_H(P) = \bN_H(Q)$.

Suppose that $\nu_p(H)=\nu_p(G)$. We want to show (i).  
If $P_1$ and $P_2$ are two distinct 
Sylow subgroups of $G$ both containing $Q$, then $H\cap P_2 = Q = H \cap P_1$. 
There exists $h \in H$  such that $(P_1)^h=P_2$, but normalizes $Q$. This 
contradicts $\bN_H(P_1) = \bN_H(Q)$.

Conversely, (i) implies $\bN_H(P)=\bN_H(Q)$. This and (ii) gives $\nu_p(H)=\nu_p(G)$.
\end{proof}

Note that if $G=\SL(2,2^k)$, with $k\geq 2$ and $H$ is the normalizer of a nonsplit torus then $\nu_p(H)=\nu_p(G)$ for $p=2$.  For $p=3$, we have that $\nu_3(A_5)=\nu_3(A_6)$. Using \cite{gkl}, it is possible to deduce that there is no almost simple example when $p>3$. It would be interesting to classify the groups $G$ generated by $p$-elements  with a proper subgroup $H$ such that $\nu_p(H)=\nu_p(G)$.

We next prove Theorem C.  In the special case of $p$-solvable groups, we have a stronger statement.  

\begin{thm}\label{PsolvSylows}
Let $G$ be a $p$-solvable group for a prime $p$ and let $H$ be a proper subgroup of $G$. Then $\nu_p(H)$ divides $\nu_p(G)$. Furthermore, if $\nu_p(H) \not= \nu_p(G)$, then $$\nu_p(H)\leq \frac{\nu_p(G)}{p+1}.$$ In particular, Theorem C is true in case $G$ is a $p$-solvable group.
\end{thm}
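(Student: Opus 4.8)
The plan is to reduce everything to Navarro's theorem (\cite{nav}), which states precisely that $\nu_p(H)$ divides $\nu_p(G)$ when $G$ is $p$-solvable; this gives the first divisibility claim immediately. The real content is the quantitative bound $\nu_p(H) \leq \nu_p(G)/(p+1)$ when $\nu_p(H) \neq \nu_p(G)$, which I would deduce by an induction on $|G|$ combined with the fixed-point-ratio machinery already developed in Theorem A. Note first that one cannot simply invoke divisibility to conclude, since the quotient $\nu_p(G)/\nu_p(H)$ could a priori be a prime other than those congruent to $1 \pmod p$ — indeed it could equal $2$ — so one genuinely needs to rule out small quotients. The key structural input is that it suffices to treat the case where $H$ contains a Sylow $p$-subgroup $P$ of $G$: if $|H|_p < |G|_p$, enlarge $H$ to a subgroup $H_1$ with $H \leq H_1 \leq G$, $|H_1|_p = |G|_p$, and note $\nu_p(H) \leq \nu_p(H_1)$ by Lemma \ref{lessorequal} (applied inside $H_1$, using $p$-solvability of $H_1$), so it is enough to bound $\nu_p(H_1)/\nu_p(G)$; if moreover $\nu_p(H) = \nu_p(H_1)$ is already $\nu_p(G)$ we would be forced into the equality case, which we are excluding, and otherwise the bound transfers. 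So assume $P \leq H < G$.

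Next I would run the standard $p$-solvable reduction. Let $N = \mathbf{O}_{p'}(G)$. If $N > 1$, consider $\bar G = G/N$, $\bar H = HN/N$. One has $\nu_p(G) = \nu_p(\bar G)$ and $\nu_p(HN) = \nu_p(\bar H)$, and $\nu_p(H) = \nu_p(HN)$ because $N$ is a normal $p'$-subgroup and $HN = H \cdot N$ so every Sylow $p$-subgroup of $HN$ lies in $H$ up to $HN$-conjugacy; hence by induction (if $\bar H < \bar G$) we are done, and if $\bar H = \bar G$ then $G = HN$, forcing $\nu_p(H) = \nu_p(G)$, the excluded case. So we may assume $\mathbf{O}_{p'}(G) = 1$, whence $\mathbf{O}_p(G) > 1$ by $p$-solvability and $\mathbf{O}_p(G) \leq P \leq H$, so $V := \mathbf{O}_p(G)$ is normal in both $G$ and $H$. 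Now pass to $G/V$ and $H/V$: the numbers $\nu_p$ are unchanged under quotienting by the normal $p$-subgroup $V$ since $P/V$ is Sylow in $G/V$ and $\mathbf{N}_{G}(P)/V = \mathbf{N}_{G/V}(P/V)$, so again induction finishes it unless $G/V$ is a $p$-group — but then $G = P \leq H$, contradicting $H < G$. Thus in the inductive setup we may assume $G$ acts faithfully and irreducibly... more precisely, we are pushed to a configuration where the obstruction to $\nu_p(H) < \nu_p(G)$ being a large gap must come from the action of $P$ on a chief factor.

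The decisive step, and the main obstacle, is handling the "base case" of the induction — essentially a group $G = VP_1$ where $V$ is an elementary abelian $p'$-group, $P_1$ a $p$-group acting on it, and $H = VQ$ with $Q < P_1$, $\mathbf{C}_V(P_1) = \mathbf{C}_V(Q)$ being exactly the situation of the equality case described in the Introduction; away from that, one needs $[P_1 : Q] \geq p+1$ to force the gap. Here I would count: $\nu_p(G) = |V : \mathbf{C}_V(P_1)| \cdot \nu_p(P_1 \ltimes \text{(complement count)})$ — concretely $\nu_p(VP_1) = |V|/|\mathbf{C}_V(P_1)|$ when $P_1$ is Sylow and similarly $\nu_p(VQ) = |V|/|\mathbf{C}_V(Q)|$ times the number of $Q$-invariant complements, so the ratio is governed by $|\mathbf{C}_V(Q) : \mathbf{C}_V(P_1)|$ and by how $Q$ sits in $P_1$. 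The point is that if $\mathbf{C}_V(Q) \neq \mathbf{C}_V(P_1)$ then this index is a power of $p$ at least $p$, and if $\mathbf{C}_V(Q) = \mathbf{C}_V(P_1)$ then $\nu_p(H) = \nu_p(G)$ exactly (the excluded equality case). Either the gap is already a factor of $p$ — hence $\nu_p(H) \leq \nu_p(G)/p \leq \nu_p(G)/(p+1)$ fails by a hair, so one must be more careful — or one extracts an extra multiplicative factor from the coset-counting of Sylow subgroups of $G$ not meeting $H$. The honest resolution: since $\nu_p(H) \mid \nu_p(G)$ with strict inequality, $\nu_p(G)/\nu_p(H) > 1$ is an integer; and since $\nu_p(G) \equiv \nu_p(H) \equiv 1 \pmod p$, the quotient $\nu_p(G)/\nu_p(H)$ is $\equiv 1 \pmod p$ as well (a quotient of two numbers each $\equiv 1$ mod $p$, when it is an integer, need not be $\equiv 1$ — so this requires the sharper fact that $\nu_p(H)$ divides $\nu_p(G)$ with quotient itself a Sylow-number-type quantity). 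I would therefore lean on the refinement in \cite{nav}/\cite{mmm} that the quotient $\nu_p(G)/\nu_p(H)$ is realized as a Sylow number of a section, hence $\equiv 1 \pmod p$, so being $>1$ it is $\geq p+1$. That delivers $\nu_p(H) \leq \nu_p(G)/(p+1)$ and completes the proof, and with it the assertion that Theorem C holds for $p$-solvable $G$ (the "moreover" clause of Theorem C being vacuous here, as no alternating-group or $\mathrm{SL}(2,p+1)$ quotient can occur in a $p$-solvable group).
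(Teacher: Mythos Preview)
The paper's proof is three lines: Navarro's theorem gives $\nu_p(H) \mid \nu_p(G)$; by Sylow's theorem both numbers are $\equiv 1 \pmod p$; hence if $k = \nu_p(G)/\nu_p(H)$ is an integer greater than $1$, then from $1 \equiv \nu_p(G) = \nu_p(H) \cdot k \equiv 1 \cdot k \pmod p$ one gets $k \equiv 1 \pmod p$, so $k \geq p+1$. That is the entire argument.

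Your proposal eventually arrives at exactly this observation, but then you mistakenly reject it. You write that ``a quotient of two numbers each $\equiv 1$ mod $p$, when it is an integer, need not be $\equiv 1$'' --- this is false. If $a \equiv b \equiv 1 \pmod p$ and $b \mid a$, then $b$ is a unit modulo $p$ and $a/b \equiv a \cdot b^{-1} \equiv 1 \cdot 1 = 1 \pmod p$. The ``sharper fact'' you then lean on (that the quotient is itself a Sylow number of some section) is therefore unnecessary, and you do not actually prove it anyway. All of the inductive apparatus preceding this --- the reduction to $P \leq H$ (which is itself problematic: your enlargement $H_1$ with $H \leq H_1 < G$ and $|H_1|_p = |G|_p$ need not exist, since $HP$ is not in general a subgroup), the passage to $G/\mathbf{O}_{p'}(G)$ and $G/\mathbf{O}_p(G)$, the analysis of $G = VP_1$ --- is never used and should be deleted.

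For the ``in particular'' clause about Theorem~C, the issue is not that the exceptional quotients in the ``moreover'' hypothesis cannot arise in a $p$-solvable group (though they cannot); the issue is that you must first verify $\nu_p(H) \neq \nu_p(G)$ under the hypotheses of Theorem~C. This holds because if $H < G$ contains a Sylow $p$-subgroup of $G$ and $\nu_p(H) = \nu_p(G)$, then every Sylow $p$-subgroup of $H$ is already Sylow in $G$, and equal counts force $H$ to contain every Sylow $p$-subgroup of $G$, hence every $p$-element; since $G$ is generated by $p$-elements this gives $H = G$, a contradiction.
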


\begin{proof}
The fact that  $\nu_p(H)$ divides $\nu_p(G)$ is Theorem A of Navarro \cite{nav}. It follows by Sylow's theorem that if $\nu_p(H)k = \nu_p(G)$, then $k$ is congruent to $1$ modulo $p$. The second statement follows. For the proof of Theorem C, we may assume that $\nu_p(H) = \nu_p(G)$, that $H$ contains a Sylow $p$-subgroup of $G$, and that $G$ is generated by $p$-elements. This cannot happen for $H$ a proper subgroup of $G$.
\end{proof}

Let $p$ be a prime and let $G$ be a finite group whose order is divisible by $p$. Let $H$ be a proper subgroup of $G$ containing a Sylow $p$-subgroup $P$ of $G$. In order to prove Theorem C, we may assume that $H$ is a maximal subgroup in $G$ by Lemma \ref{lessorequal}.  

We write $\Omega$ to denote the set of conjugates of $H$ in $G$. The group $G$ has a primitive action on the set $\Omega$. We define the fixed point ratio of $P$ on $\Omega$ to be 
$$
\fpr(P,\Omega)=\frac{|\bC_{\Omega}(P)|}{|\Omega|}.
$$
Note that for any element $x\in P$, we have $\fpr(P,\Omega)\leq \fpr(x,\Omega)$.

\begin{lem}\label{SylowFPR}
Let $p$ be a prime. Let $H$ be a maximal subgroup of $G$ containing a Sylow $p$-subgroup $P$ of $G$. Let $\Omega$ be the set of conjugates of $H$ in $G$. We have
$$\frac{\nu_p(H)}{\nu_p(G)}=\fpr(P,\Omega).$$
\end{lem}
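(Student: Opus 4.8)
The plan is to compute both sides of the identity directly in terms of subgroup indices and the Sylow-counting argument from Lemma \ref{lessorequal}. Recall that $\Omega$ is the set of $G$-conjugates of $H$, so $|\Omega| = |G : H|$ and $G$ acts on $\Omega$ by conjugation with point stabilizer $H$ (using that $H$ is maximal, hence self-normalizing in $G$ — or at least $\norm{G}{H} = H$, which holds here since $H$ is maximal and $H \neq G$). The fixed points of $P$ on $\Omega$ are exactly the conjugates $H^g$ that are normalized by $P$, equivalently that contain $P$ (since $P$ is a Sylow $p$-subgroup of $G$, hence a Sylow $p$-subgroup of $H^g$, and a $p$-group normalizing a subgroup containing one of its conjugate Sylow $p$-subgroups must lie inside it — more simply, if $P$ normalizes $H^g$ then $P H^g$ is a subgroup, $P H^g = H^g$ by order considerations since $|H^g|_p = |G|_p = |P|$, so $P \leq H^g$).

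**Both sides as ratios of indices.** So $|\bC_\Omega(P)| = |\{ g \in G : P \leq H^g \}| / |H|$ after accounting for the stabilizer, but it is cleaner to argue: the conjugates of $H$ containing $P$ are in bijection with... here I would instead count pairs. Let me use the standard double-counting: the number of conjugates of $H$ containing a fixed $P$ equals (number of pairs $(P', H')$ with $P'$ a Sylow $p$-subgroup of $G$ inside $H'$, $H'$ conjugate to $H$) divided by $\nu_p(G)$, since $G$ acts transitively on Sylow $p$-subgroups. On the other hand, each conjugate $H'$ of $H$ contains exactly $\nu_p(H') = \nu_p(H)$ Sylow $p$-subgroups of $G$ (every Sylow $p$-subgroup of $H'$ is a Sylow $p$-subgroup of $G$ since $|H'|_p = |G|_p$). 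Hence the number of such pairs is $|\Omega| \cdot \nu_p(H) = |G:H| \cdot \nu_p(H)$, and therefore
$$
|\bC_\Omega(P)| = \frac{|G:H| \cdot \nu_p(H)}{\nu_p(G)}.
$$
Dividing by $|\Omega| = |G:H|$ gives $\fpr(P,\Omega) = \nu_p(H)/\nu_p(G)$, as desired.

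**The main point to be careful about.** The one genuine subtlety is the claim that a Sylow $p$-subgroup $P'$ of a conjugate $H'$ of $H$ is automatically a Sylow $p$-subgroup of $G$; this is exactly the hypothesis that $H$ contains a Sylow $p$-subgroup of $G$, i.e. $|H|_p = |G|_p$, which passes to conjugates. The other point is that the fixed points of $P$ really are the conjugates containing $P$ and not merely normalizing $P$ in some weaker sense — but as noted above, $P$ normalizing $H'$ forces $P \leq H'$ because $P H'$ would otherwise be a $p$-overgroup of the Sylow $p$-subgroup $P$ strictly larger than $H'$ inside $G$, contradicting $|H'|_p = |G|_p$. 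I do not expect any real obstacle here; the whole statement is a bookkeeping identity once these two facts are in place, and it is essentially the orbit-counting translation that makes Theorem C follow from Theorem A via the primitive action of $G$ on $\Omega$.
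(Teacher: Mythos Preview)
Your argument is correct and is essentially the same double-counting as the paper's: both count pairs $(P',H')$ with $P'\in\Syl_p(G)$ and $H'$ a $G$-conjugate of $H$ containing $P'$, once via $|\Omega|\cdot\nu_p(H)$ and once via $\nu_p(G)\cdot|\bC_\Omega(P)|$. One small remark: ``$H$ maximal $\Rightarrow$ $\norm G H = H$'' is not literally true (a maximal subgroup can be normal), but your double count only uses $|\Omega|$ as a symbol that cancels, so this does not affect the proof---and the paper makes the same tacit assumption.
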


\begin{proof}
Sylow's theorem gives
$$\frac{\nu_p(H)}{\nu_p(G)}=\frac{|H|}{|G|}\frac{|\norm G P|}{|\norm H P|}.$$
Observe that $|\Omega|=|G:H|$. Observe also that $\fpr(P,\Omega)$ is the number of conjugates of $H$ containing $P$. It is sufficient to show that this number is $|\norm G P|/|\norm H P|$.

Consider the set $\Sigma = \{ (Q,L) \ | \ Q \in \mathrm{Syl}_{p}(L) , \ L = H^{x} , \ x \in G \}$. There are $|G|/|H|$ conjugates of $H$ in $G$ and each conjugate contains $|H|/|\norm H P|$ Sylow $p$-subgroups. Thus $|\Sigma|=|G|/|\norm H P|$. On the other hand, $|\Sigma| = \fpr(P,\Omega) \cdot (|G|/|\norm G P|)$. The result follows.
\end{proof}

Theorem C follows by the assumption that $H$ is maximal in $G$, by Lemma \ref{SylowFPR}, by the line before Lemma \ref{SylowFPR} and by Theorem A. 

Next we complete the proof of Theorem E. 

\begin{proof}[Proof of Theorem E]
Let $G$ be a finite simple group of order divisible by $p$ and let $H$ be a subgroup of $G$ with $|H|_p=|G|_p$. If $H\leq L<G$, then $\nu_p(H)\leq \nu_p(L)$ by Lemma \ref{lessorequal}. We may thus assume that $H$ is a maximal subgroup of $G$ for every $(G,H)\in\mathcal{S}$. Let $P$ be a Sylow $p$-subgroup of $H$. For any element $x \in P$, we have $$\frac{\nu_p(H)}{\nu_p(G)}=\fpr(P,\Omega) \leq \fpr(x,\Omega)$$ by Lemma \ref{SylowFPR}, where $\Omega$ is the set of conjugates of $H$ in $G$. Sylow's theorem and Theorem B allows us to choose $x$ such that $\fpr(x,\Omega) \to 0$, as $|G| \to \infty$. 
\end{proof}

Finally, we describe the structure of a minimal counterexample to Conjecture D. We will need the following, which was proved in \cite{hal}, and we state here for the reader's convenience.

\begin{lem}\label{SylowNorm}
Let $p$ be a prime, let $G$ be a finite group, let $P\in\syl p G$ and let $N$ be a normal subgroup of $G$. Then 
$$\nu_p(G)=\nu_p(G/N)\nu_p(PN).$$
\end{lem}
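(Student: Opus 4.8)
The plan is to write all three Sylow numbers as indices of normalizers and to recognize the claimed identity as a telescoping product of indices. Observe first that $P$ is a Sylow $p$-subgroup of $PN$, since $|PN|_{p} = |G|_{p} = |P|$, and that $PN/N$ is a Sylow $p$-subgroup of $G/N$, being the image of $P$. So what must be shown is
$$|G : \norm{G}{P}| = |G/N : \norm{G/N}{PN/N}| \cdot |PN : \norm{PN}{P}|.$$

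First I would pass the normalizer through the quotient: if $gN$ normalizes $PN/N$, then $(PN)^{g}N = PN$, hence $(PN)^{g} \subseteq PN$ and therefore $(PN)^{g} = PN$ by comparing orders; the reverse inclusion being obvious, we get $\norm{G/N}{PN/N} = \norm{G}{PN}/N$, and so $\nu_{p}(G/N) = |G : \norm{G}{PN}|$.

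The key step is the identity $\norm{G}{PN} = N\,\norm{G}{P}$. Since $\norm{G}{P} \subseteq \norm{G}{PN}$ (as $N \trianglelefteq G$), the Frattini argument applied to the normal subgroup $PN$ of $\norm{G}{PN}$ and its Sylow $p$-subgroup $P$ gives
$$\norm{G}{PN} = PN \cdot \norm{\norm{G}{PN}}{P} = PN \cdot \norm{G}{P} = N\,\norm{G}{P},$$
the middle equality because $\norm{G}{P} \subseteq \norm{G}{PN}$ and the last because $P \leq \norm{G}{P}$ and $N$ is normal. I expect this to be the only mildly delicate point; everything else is bookkeeping with indices.

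Finally, $\norm{PN}{P} = PN \cap \norm{G}{P}$, and the product formula $|AB| = |A|\,|B| / |A \cap B|$ with $A = PN$ and $B = \norm{G}{P}$ yields $|PN : \norm{PN}{P}| = |PN \cdot \norm{G}{P} : \norm{G}{P}| = |N\,\norm{G}{P} : \norm{G}{P}|$. Combining the three facts,
$$|G : \norm{G}{P}| = |G : N\,\norm{G}{P}| \cdot |N\,\norm{G}{P} : \norm{G}{P}| = \nu_{p}(G/N) \cdot \nu_{p}(PN),$$
as claimed. The one thing to watch throughout is which group each normalizer is taken in, and that $P$ is genuinely a Sylow subgroup of each group in play.
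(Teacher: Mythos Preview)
Your proof is correct. Each step is sound: the identification of the Sylow subgroups in $PN$ and $G/N$, the computation $\norm{G/N}{PN/N}=\norm{G}{PN}/N$, the Frattini argument giving $\norm{G}{PN}=N\,\norm{G}{P}$, and the final index telescoping. The only cosmetic point is that in the line $(PN)^{g}\subseteq PN$ you are implicitly using $N\trianglelefteq G$ so that $N\leq (PN)^{g}$ and hence $(PN)^{g}=(PN)^{g}N=PN$; this is fine but worth stating explicitly.

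As for comparison with the paper: the paper does not give its own argument at all, but simply cites the identity as Equation~(2.4) of Hall's 1967 paper \emph{On the number of Sylow subgroups in a finite group}. Your proof is a clean, self-contained version of the standard derivation of that identity, and is exactly what one would write to make the paper independent of the reference.
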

\begin{proof}
See Equation (2.4) of \cite{hal}.
\end{proof}

We give the structure of a minimal counterexample to Conjecture D.

\begin{thm}\label{thm:minimalcounterConjD} 
Let $f(p)$ be a fixed function from the set of prime integers $p$ into $[1/2,1)$. 
Suppose that $(G,H)$ is a counterexample to Conjecture D, with this function $f(p)$,  with first $|G|$ and then $|G:H|$ as small as possible. Then $G$ has a unique minimal normal subgroup $A$ which is non-abelian and which has order divisible by $p$ such that $G=AQ$, where $Q\in\syl p H$. Furthermore, $H$ is maximal in $G$ and core-free.
\end{thm}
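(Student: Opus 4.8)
The plan is to analyze a minimal counterexample $(G,H)$ by successively applying the reduction tools available to us, namely Lemma \ref{lessorequal}, Lemma \ref{SylowNorm}, and Theorem C, peeling off structure until only the claimed configuration remains. First I would establish that $H$ is maximal in $G$: if $H \le L < G$, then by Lemma \ref{lessorequal} we have $\nu_p(H) \le \nu_p(L)$, and since $\nu_p(H) < \nu_p(G)$ we may replace $H$ by $L$ without increasing $|G:H|$ (in fact decreasing it unless $H=L$); minimality of $|G:H|$ forces $H = L$. Next I would show $H$ is core-free: let $N = \mathrm{Core}_G(H)$ and suppose $N \ne 1$. Using Lemma \ref{SylowNorm} (applied with $N$) and the fact that Sylow numbers are preserved under the quotient by $N$ when $P \le H$ (so $PN/N$ is Sylow in $G/N$ and $\nu_p(G) = \nu_p(G/N)\nu_p(PN)$, while $\nu_p(H) = \nu_p(H/N)\nu_p(PN)$ since $PN \le H$), one gets $\nu_p(H)/\nu_p(G) = \nu_p(H/N)/\nu_p(G/N)$, so $(G/N, H/N)$ is also a counterexample with smaller order — contradicting minimality of $|G|$. (One must check $\nu_p(H/N) < \nu_p(G/N)$, which follows from the ratio identity.)

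Having reduced to $H$ maximal and core-free, I would next locate a minimal normal subgroup $A$ of $G$ and argue it is unique, non-abelian, and of order divisible by $p$. For uniqueness: $H$ core-free maximal means $G$ acts faithfully and primitively on $G/H$; if there were two distinct minimal normal subgroups, or a minimal normal subgroup $A$ with $AH \ne G$, I would use Lemma \ref{SylowNorm} together with Theorem C applied to an appropriate section to contradict minimality — the key point being that $G = AH$ for every minimal normal $A$ (otherwise $AH$ is a proper subgroup properly containing $H$, contradicting maximality, so in fact every nontrivial normal subgroup satisfies $NH = G$; combined with $H$ core-free this is exactly the socle-type situation of an O'Nan–Scott primitive group, and two minimal normal subgroups $A_1, A_2$ would give $A_1 \cap A_2 = 1$, $A_1 A_2 \le G$, and then $A_1 \cong A_1 A_2 / A_2 \hookrightarrow G/A_2$, forcing $A_1 \le H$ since... ). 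Rather than the full O'Nan–Scott machinery, the cleaner route: if $A$ is abelian, then since $G = AH$ and $A \cap H \trianglelefteq G$ with $A \cap H \le \mathrm{Core}_G(H) = 1$, we get $G = A \rtimes H$; then $G$ is generated by $p$-elements but I would derive a contradiction with the counterexample hypothesis using Theorem C — specifically, if $p \nmid |A|$ then $\nu_p(G) = |A : C_A(P)| \cdot \nu_p(H)$ and one must rule out the ratio exceeding $f(p)$, while if $p \mid |A|$ then $A \le P \le H$ forces $A \le \mathrm{Core}_G(H)=1$. So $A$ must be non-abelian, hence $p \mid |A|$ would again give a contradiction unless — wait, $A$ non-abelian does not lie in $P$, so $p \mid |A|$ is allowed and is in fact forced: if $p \nmid |A|$ then $P \le H$ and $C_G(A) \cap ...$; here I would invoke that $G/A C_G(A)$ embeds in $\mathrm{Out}(A)$ and that $G = AH$ with minimality to conclude $p \mid |A|$.

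Finally I would pin down $G = AQ$ where $Q \in \mathrm{Syl}_p(H)$. Since $P \le H$ and $P \in \mathrm{Syl}_p(G)$, write $Q = P$. We have $A \trianglelefteq G$ with $p \mid |A|$, so $AP \le G$; I claim $G = AP$. Indeed $AP \ge P$, and if $AP < G$ then by maximality of $H$... but $AP$ need not contain $H$. Instead: suppose $G \ne AP$. Then $(AP, AP \cap H)$ — here I would set up a last application of Lemma \ref{SylowNorm}: $\nu_p(G) = \nu_p(G/A)\nu_p(AP)$ and $\nu_p(H) = \nu_p(H/(A\cap H))\nu_p(\,\cdot\,)$... and use that $G/A$ being a proper quotient generated by $p$-elements lets Theorem C or minimality of $|G|$ apply to $(G/A, HA/A)$; since $HA = G$ this quotient is trivial, giving $\nu_p(G) = \nu_p(AP)$ and $\nu_p(H) = \nu_p(P \cdot (A \cap H)) $, forcing the ratio to be controlled by a smaller or degenerate configuration, and ultimately $G = AP$ as claimed (the alternative produces a strictly smaller counterexample inside $AP$).

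\medskip
\noindent\emph{Main obstacle.} The delicate part is the uniqueness and non-abelianness of $A$ together with showing $G = AP$: this requires correctly juggling Lemma \ref{SylowNorm} across the quotient by $A$ (or by $\mathrm{Core}_G(H)$) while tracking that the relevant Sylow numbers stay in strict inequality, and handling the affine (abelian socle) case — where $\nu_p(G) = \nu_p(H) \cdot |A:C_A(P)|$ and one needs the counterexample hypothesis $f(p) \ge 1/2$ to be violated — which is exactly where the hypothesis $f(p) \in [1/2,1)$ and the $\mathrm{SL}(2,2^k)$ caveat from the discussion after Conjecture D become essential. I expect the bookkeeping in the case analysis (abelian vs. non-abelian socle, $p \mid |A|$ vs. $p \nmid |A|$) to be the real work, with the core-free and maximality reductions being routine.
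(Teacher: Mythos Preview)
Your proposal contains a fundamental misreading of the hypothesis: throughout you assume that $H$ contains a Sylow $p$-subgroup of $G$ (you write ``Since $P \le H$ and $P \in \mathrm{Syl}_p(G)$, write $Q = P$'', and your core-free reduction relies on ``$\nu_p(H) = \nu_p(H/N)\nu_p(PN)$ since $PN \le H$''). But Conjecture~D makes no such assumption --- it concerns an arbitrary subgroup $H$ with $\nu_p(H) < \nu_p(G)$, and the whole point of the conjecture (and of the theorem) is to handle the case where $H$ does \emph{not} contain a full Sylow. The theorem's conclusion is $G = AQ$ with $Q \in \mathrm{Syl}_p(H)$, not $Q \in \mathrm{Syl}_p(G)$. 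Once you drop $P \le H$, your core-free argument collapses: Lemma~\ref{SylowNorm} gives $\nu_p(H) = \nu_p(H/N)\nu_p(QN)$ with $Q \in \mathrm{Syl}_p(H)$, and there is no reason for $\nu_p(QN) = \nu_p(PN)$, so the ratio $\nu_p(H)/\nu_p(G)$ does not simply pass to the quotient. The paper handles this by first proving (using the hypothesis $f(p) \ge 1/2$) two preparatory facts you omit entirely: that $\mathbf{N}_H(Q) = \mathbf{N}_H(P)$, and that $P$ is the \emph{unique} Sylow $p$-subgroup of $G$ containing $Q$ (so $\mathbf{N}_G(Q) \le \mathbf{N}_G(P)$). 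These are what make the quotient argument in the core-free step go through, via a finer Hall-type formula than Lemma~\ref{SylowNorm} alone.

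A second gap: you repeatedly invoke Theorem~C to rule out the affine/abelian-socle case, but Theorem~C requires $G$ to be generated by its $p$-elements, which is not assumed in Conjecture~D. The correct tool here is Theorem~\ref{PsolvSylows} (Navarro's divisibility theorem for $p$-solvable groups): it shows that no $p$-solvable group can be a counterexample when $f(p) \ge 1/2 > 1/(p+1)$, and since $G/A$ is a $p$-group (once $G = PA$ is established), an abelian $A$ --- or a non-abelian $A$ of order coprime to $p$ --- would make $G$ $p$-solvable. Your sketch of the affine case (``$\nu_p(G) = |A : C_A(P)| \cdot \nu_p(H)$'') again tacitly uses $P \le H$ and does not actually close the case.
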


\begin{proof}
Let $G$ be a finite group  and let $H<G$ be a subgroup  of $G$ with $\nu_p(H)<\nu_p(G)$. Since $G$ is a minimal counterexample, we have that $\nu_p(J)\leq f(p)\nu_p(K)$ or $\nu_p(J)=\nu_p(K)$ whenever $J\leq K$ and $|K|<|G|$. Let $Q\in \Syl_p(H)$ and let $P\in \Syl_p(G)$ such that $Q=P\cap H$.

\medskip

\textit{Step 1: We may assume that $|G|>|H \norm G P|$ and $\norm H Q=\norm H P$.}

\medskip

Notice that
$$1>\frac{\nu_p(H)}{\nu_p(G)}=\frac{|H:\norm H Q|}{|G:\norm G P|}=\frac{|H\norm G P|}{|G|}\frac{|\norm H P|}{|\norm H Q|}.$$ Since $\norm H P$ is a subgroup of $\norm H Q$, if $|G|=|H \norm G P|$ or $|\norm H Q|>|\norm H P|$, we obtain that $|\norm H P|\leq |\norm H Q|/2$ and therefore, $$\nu_p(H)\leq \frac{1}{2}\nu_p(G),$$ which is not possible since $(G,H)$ is a counterexample. Therefore, we may assume that $|G|>|H \norm G P|$ and $\norm H Q=\norm H P$, as wanted. 
 
 \medskip
 
 \textit{Step 2: We may assume that $H$ is maximal.}
 
\medskip
 
Suppose that $H$ is not maximal and let $M$ be a maximal subgroup of $G$ containing $H$. If $\nu_p(H)<\nu_p(M)$, then $\nu_p(H)\leq f(p)\nu_p(M)$ by the minimality of $G$ as a counterexample. Thus, $\nu_p(H)\leq f(p)\nu_p(M)\leq f(p)\nu_p(G)$, by Lemma \ref{lessorequal}, which is a contradiction. Therefore, $\nu_p(H)=\nu_p(M)$, which contradicts the minimality of $|G:H|$.

\medskip

\textit{Step 3: $P$ is the unique Sylow $p$-subgroup in $G$ containing $Q$ and $\norm G Q\leq \norm G P$.}

\medskip

Assume that there exists $T\in \Syl_p(G)$ with $T\not=P$ such that $Q=P\cap H=T\cap H$. Let $S\in \Syl_p(H)$ with $S=Q^h$ for some $h \in H$.  Then $S=Q^h=P^h\cap H=T^h\cap H$ and $P^h,T^h\in \Syl_p(G)$. If $S\neq Q$, then $\{P,T\}\cap\{P^h,T^h\}=\emptyset$ and we conclude that for each Sylow $p$-subgroup of $H$, we can get $2$  Sylow $p$-subgroups of $G$ containing it and pairwise different. It follows that $\nu_p(H)\leq\frac{1}{2}\nu_p(G)$, a contradiction. Thus, we may assume that $P$ is the unique Sylow $p$-subgroup containing $Q$. Now, let $x\in\norm G Q$, then $Q=Q^x\subseteq P^x$, and it follows that $P=P^x$, so $\norm G Q\leq \norm G P$, as wanted.

\medskip

\textit{Step 4: $H$ is core-free.}

\medskip

Suppose that there exists $1< N$ a normal subgroup of $G$ contained in $H$. Then
$$\frac{\nu_p(H)}{\nu_p(G)}=\frac{\nu_p(H/N)\nu_p(N)\nu_p(\norm{QN}{Q\cap N})}{\nu_p(G/N)\nu_p(N)\nu_p(\norm  {PN}{P\cap N})},$$
by Theorem 2.1 of \cite{hal}. Notice that $P\cap N=Q\cap N$ and hence $\norm {QN}{Q\cap N}\leq \norm {PN}{P\cap N}$. Thus, $\nu_p(\norm {QN}{Q\cap N})\leq \nu_p(\norm {PN}{P\cap N})$ by Lemma \ref{lessorequal}. We claim that $\nu_p(H/N)<\nu_p(G/N)$. Since $\norm G Q\leq\norm G P$ by Step 3 and $P\cap H=Q$, we have that $\norm H P=\norm H Q$. Then $|N\norm G P:\norm G P|=|N\norm H Q:\norm H Q|$. Suppose that $\nu_p(H/N)=\nu_p(G/N)$, then $|G:N\norm G P |=|H:N\norm H Q |$ and therefore $\nu_p(G)=\nu_p(H)$, a contradiction.  Hence the claim is proved and since $|G/N|<|G|$, we obtain that $\nu_p(H/N)\leq f(p){\nu_p(G/N)}$ by the minimality of $G$ as a counterexample. Therefore, $\nu_p(H)\leq f(p) \nu_p(G)$, a contradiction.

\medskip

\textit{Step 5: We may assume that $G$ has a unique minimal normal subgroup $A$ and that $G=AQ$.}

\medskip

 Let $A$ be a minimal normal subgroup in $G$. Since $H$ is core-free and maximal we have that $G=HA$. Then
$$1>\frac{\nu_p(H)}{\nu_p(G)}=\frac{\nu_p(HA/A)\nu_p(Q(A\cap H))}{\nu_p(G/A)\nu_p(PA)}=\frac{\nu_p(Q(A\cap H))}{\nu_p(PA)},$$
by Lemma \ref{SylowNorm}. Notice that $Q(A\cap H)\not=PA$ since $Q(A\cap H)\leq H$ and $A\not \leq H$. Thus, $Q(A\cap H)<PA$. If $PA<G$, then $\nu_p(Q(A\cap H))\leq f(p) \nu_p(PA)$ by the minimality of $G$. Thus, we have that $G=PA$, and then $|G:A|$ is a power of $p$. On the other hand, $|G:QA|=|H:Q(A\cap H)|$ is not divisible by $p$, since $Q\in\syl p H$. This implies that $G=QA$, as wanted.

It remains to prove that $A$ is the unique minimal normal subgroup of $G$. Assume that $M$ is a minimal normal subgroup of $G$ different from $A$. Then $|M|$ divides $|G:A|$ and since $G/A$ is a $p$-group, we have that $M$ is an elementary abelian $p$-group. Reasoning as above, we would obtain that $G=MP=P$, which is impossible.\medskip

\textit{Step 6: $A=S_1\times S_2\times\cdots\times S_t$, where $S_i\cong S$ a nonabelian simple group of order divisible by $p$, for all $i=1,\ldots,t$, $\{S_1,\ldots, S_t\}$ is transitively permuted by $Q$ and $t$ is a power of $p$.}

\medskip

The group $G$ is not $p$-solvable by Theorem 5.1. 
Since $G/A$ is a $p$-group, $A = S_{1} \times \ldots \times S_{t}$ where $S_i \cong S$ is a nonabelian simple group of order divisible by $p$, for all $i = 1,\ldots t$, and the group $Q$ acts transitively on the set $\{S_1, \ldots , 
S_t\}$. In particular, $t$ is a power of $p$.
\end{proof}

\begin{rem}
Note that this shows that if $G$ does not have a section isomorphic to $C_p\wr C_p$ then a minimal counterexample is an almost simple group. The reduction to almost simple groups of the general case of Conjecture D appears to require different techniques to those in this paper. We plan to  address it and the almost simple case elsewhere.
\end{rem}

\section{Covering the set of $p$-elements}

Next we prove Theorem G. In fact, we prove a stronger statement.

\begin{thm}
\label{gstr}
Let $p$ be a prime. 
For any sequence of simple groups $G$ of order going to infinity and divisible by $p$, 
there exists a $p$-element $x \in G$ such that the minimal number of proper subgroups of $G$ that are necessary to cover the conjugacy class of $x$ goes to infinity. 
\end{thm}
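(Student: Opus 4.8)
The plan is to reduce the covering statement to a statement about fixed point ratios via a standard double-counting argument, and then invoke Theorem B. Let $G$ be simple of order divisible by $p$, and let $x$ be the $p$-element produced by Theorem B, so $\fpr(x,\Omega) \to 0$ as $|G| \to \infty$ for any primitive action of $G$. Suppose $\mathcal{C} = x^{G}$ is covered by proper subgroups $H_{1}, \ldots, H_{k}$ of $G$; since $G$ is simple, we may enlarge each $H_{i}$ to a maximal subgroup $M_{i}$, and then $\mathcal{C} \subseteq \bigcup_{i=1}^{k} M_{i}$ with each $M_{i}$ core-free. For each $i$, the action of $G$ on $\Omega_{i} = \{ M_{i}^{g} : g \in G\}$ is primitive, and the identity $\fpr(x,\Omega_{i}) = |x^{G} \cap M_{i}| / |x^{G}|$ recalled from \cite[Lemma 1.2(iii)]{bur} gives $|x^{G} \cap M_{i}| = \fpr(x,\Omega_{i}) \cdot |\mathcal{C}|$. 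Summing over $i$ and using that the $M_{i}$ cover $\mathcal{C}$,
\begin{equation*}
|\mathcal{C}| \leq \sum_{i=1}^{k} |x^{G} \cap M_{i}| = |\mathcal{C}| \sum_{i=1}^{k} \fpr(x,\Omega_{i}),
\end{equation*}
so $\sum_{i=1}^{k} \fpr(x,\Omega_{i}) \geq 1$.

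Next I would bound each term. By Theorem B, there is a function $\epsilon(|G|) \to 0$ such that $\fpr(x,\Omega_{i}) \leq \epsilon(|G|)$ for every primitive action of $G$ — in particular for each $\Omega_{i}$. Combining with the displayed inequality gives $k \cdot \epsilon(|G|) \geq 1$, i.e. $k \geq 1/\epsilon(|G|)$. Since $\epsilon(|G|) \to 0$ as $|G| \to \infty$ along the sequence, we conclude $k \to \infty$; that is, the minimal number of proper subgroups needed to cover $x^{G}$ tends to infinity, which is exactly the assertion of Theorem \ref{gstr} (and it implies Theorem G, since $\sigma_{p}(G)$ is at least this covering number whenever $G$ is generated by $p$-elements, which a nonabelian simple group of order divisible by $p$ certainly is).

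One technical point to handle carefully: Theorem B as stated asserts, for a fixed sequence of simple groups, the existence of a single $p$-element $x$ whose fixed point ratio tends to $0$ in every primitive action — but a priori the rate of decay is uniform in the action only once the group is fixed. This is fine for us: for each group $G$ in the sequence we simply take $\epsilon(|G|) = \max_{\Omega} \fpr(x,\Omega)$, the maximum ranging over the finitely many primitive actions of $G$, and the proof of Theorem B (via Propositions \ref{palternating}, \ref{pexceptional}, \ref{pnonsubspace}, \ref{psubspace}) shows this maximum still tends to $0$. The main obstacle, then, is not really an obstacle at all for \emph{this} theorem — the real work has already been done in proving Theorem B — but one must be slightly attentive to the quantifiers when passing from "$\fpr(x,\Omega) \to 0$ for each $\Omega$" to "the worst $\Omega$ still gives something going to $0$," and one should note explicitly that we may assume the covering subgroups are maximal (hence the action is primitive) precisely because $G$ is simple.
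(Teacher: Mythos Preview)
Your proposal is correct and follows essentially the same approach as the paper: reduce to maximal subgroups, identify the proportion of $x^G$ covered by each $M_i$ with $\fpr(x,G/M_i)$ via \cite[Lemma 1.2(iii)]{bur}, and apply Theorem B. Your write-up is in fact more careful than the paper's (which compresses the argument into three sentences), and your explicit attention to the quantifier issue---taking $\epsilon(|G|)$ to be the maximum of $\fpr(x,\Omega)$ over the finitely many primitive actions of $G$---is a point the paper leaves implicit.
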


\begin{proof}
It is sufficient to find a $p$-element $x$ in $G$ such that for any maximal subgroup $H$ in $G$ the proportion of elements in $x^{G}$ covered by $H$ goes to $0$ as the size of $G$ tends to infinity. This proportion is $|x^G\cap H|/|x^G|$, which is $\fpr(x,G/H)$ by \cite[Lemma 1.2(iii)]{bur}. The result follows from Theorem B.
\end{proof}

Finally, we prove Theorem F. This does not depend on fixed point ratios and does not use the classification of finite simple groups either.

\begin{proof}[Proof of Theorem F]
If $G$ is a $p$-group, the result is easy and well-known. Suppose that $G$ is not a $p$-group. 

Among all proper subgroups $H$ containing a Sylow $p$-subgroup choose one
with $H =\langle H_p\rangle$  and $|H_p|$ maximal. Let $P\in\Syl_p(G)$ such that $P\leq H$.

If $H\leq M$ with $M$ maximal, then $M_p=H_p$ and so $H$ is normal in $M$.  Since $G$ is generated by the conjugates of $P$, $M$ is not normal in $G$. Thus $H$ is in a unique maximal subgroup $M=\bN_G(H)$.

Arguing by induction, $H_p$ is not covered by the union of $p$ proper subgroups
and so
in any covering of $G_p$ by $p$ proper subgroups of $G$, one of them must contain
$H$.    Since we may assume our covering consists of maximal subgroups,
the unique maximal subgroup $M$ containing $H$ must be in the covering.
The same applies to any $H^g$ and so $M^g$ must also be in the covering as well.

Let $s =|G:M|$ be the number of conjugates of $M$.  Then $s \ne p$ since $M$ contains
$P$ and since $G =\langle G_p\rangle$, $ s \ge p$ (there are no homomorphisms from $G$ into
$S_{p-1}$ other than the trivial one).
We conclude that the covering has size $s>p$, as wanted.
\end{proof}

A variation of Lemma 2.3 of \cite{brg} can be used to prove that $\sigma_p(G)\geq 3$.

\section{Noncommuting subsets and graphs} \label{graph}

Given a finite group $G$, we consider the graph with vertices the elements of $G$ where two vertices $x,y\in G$ are joined by an edge if they do not commute. This is the so-called non-commuting graph of $G$, which has been very studied. By its definition, the clique number $n(G)$ of this graph   turns out to be the size of the largest subset of $G$ consisting of pairwise noncommuting elements. Answering a question of Erd\"os, it was proved by Neumann \cite{neu} that $|G:\bZ(G)|$ is bounded from above in terms of $n(G)$. Later, this bound was improved by Pyber to $|G:\bZ(G)|\leq c^{n(G)}$, for some constant $c$ in \cite{P}. This bound is of the right order of magnitude. 

In this section, we look for a local version of this result. Is it true that the index of the centre of a finite group $G$ is bounded from above in terms of the largest subset of pairwise noncommuting $p$-elements? We will write $n_p(G)$ to denote the size of this set. More generally, we will write $n_{\pi}(G)$ to denote the size of the largest subset of $G$ consisting of pairwise noncommuting $\pi$-elements. We will say that a subset $S$ of $G$ is noncommuting if for every $x,y\in S$, $x$ and $y$ do not commute. As a consequence of Theorem G, we have the following.

\begin{thm}
\label{simnc}
Let $p$ be a prime. Then $\sigma_p(G)\leq n_p(G)$. In particular $n_p(G)\to\infty$ when $G$ is a finite simple group of order divisible by $p$ and $|G|\to\infty$.
\end{thm}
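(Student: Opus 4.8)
The plan is to prove the inequality $\sigma_p(G) \leq n_p(G)$ first, and then derive the asymptotic statement from Theorem G. For the inequality, suppose $G$ is generated by its $p$-elements (otherwise $\sigma_p(G)$ is not defined, or we interpret it as $\infty$ and there is nothing to prove). Let $\{x_1, \dots, x_n\}$ be a maximal noncommuting set of $p$-elements of $G$, so $n = n_p(G)$. The key observation is the following: for each $i$, the centralizer $\mathbf{C}_G(x_i)$ is a proper subgroup of $G$ (it cannot be all of $G$ since $x_i$ is a nontrivial $p$-element and $G$ is nonabelian, as $G$ being a $p$-group would make $\sigma_p(G)$ trivial anyway), and I claim that the $n$ subgroups $\mathbf{C}_G(x_1), \dots, \mathbf{C}_G(x_n)$ cover the set $G_p$ of all $p$-elements of $G$.

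To see the covering claim, let $y \in G_p$ be arbitrary. If $y$ commutes with every $x_i$, then $\{x_1, \dots, x_n, y\}$ would be a noncommuting set of $p$-elements of size $n+1$ unless $y$ equals some $x_i$; either way $y$ commutes with some $x_i$ (itself, if $y = x_i$), hence $y \in \mathbf{C}_G(x_i)$. Otherwise $y$ fails to commute with some $x_i$, but wait — that does not directly put $y$ in a centralizer. Let me reconsider: the correct statement is that by maximality of the noncommuting set, $y$ must commute with at least one $x_i$ (possibly $y = x_i$ itself), since otherwise $\{x_1,\dots,x_n,y\}$ is a larger noncommuting set of $p$-elements. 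Hence $y \in \mathbf{C}_G(x_i)$ for some $i$. Therefore $G_p \subseteq \bigcup_{i=1}^n \mathbf{C}_G(x_i)$, a union of $n = n_p(G)$ proper subgroups, which gives $\sigma_p(G) \leq n_p(G)$.

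For the ``in particular'' clause, let $G$ range over finite simple groups of order divisible by $p$ with $|G| \to \infty$. Each such $G$ is generated by its $p$-elements (the subgroup they generate is normal and nontrivial, hence all of $G$ by simplicity), so $\sigma_p(G)$ is defined, and by Theorem G we have $\sigma_p(G) \to \infty$. Combining with $\sigma_p(G) \leq n_p(G)$ yields $n_p(G) \to \infty$, as desired.

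I do not expect any serious obstacle here; the only point requiring a little care is the maximality argument for the covering, and the observation that each $\mathbf{C}_G(x_i)$ is proper, which uses that $G$ is nonabelian (automatic once $|G|$ is large simple, and in the general inequality one reduces immediately to the case where $G$ is not a $p$-group, whence it has a nontrivial $p$-element with proper centralizer — if some $p$-element were central, one could still run the argument since a maximal noncommuting set of $p$-elements in a non-$p$-group is nonempty and each member is noncentral by construction after discarding central elements). I would state the inequality cleanly as a lemma-free paragraph and invoke Theorem G for the conclusion.
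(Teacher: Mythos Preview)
Your proposal is correct and follows essentially the same approach as the paper: take a maximal noncommuting set $\{x_1,\dots,x_k\}$ of $p$-elements, observe by maximality that $G_p \subseteq \bigcup_{i=1}^k \mathbf{C}_G(x_i)$, and then invoke Theorem~G for the asymptotic conclusion. The paper's version is terser and does not dwell on the properness of the centralizers (which is immediate once $k\geq 2$, since each $x_i$ fails to commute with some $x_j$); your extra discussion on this point is harmless but could be trimmed.
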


\begin{proof}
Write $k=n_p(G)$ and let $\{x_1,\dots,x_k\}\subseteq G_p$ be a noncommuting subset of $G$ of maximal size. Then $G_p\subseteq\bigcup_{i=1}^k\bC_G(x_i)$, so $\sigma_p(G)\leq n_p(G)$. The result follows from Theorem G.
\end{proof}

Next, we relate our question with the commuting probability of $p$-elements.  We will use a graph theoretic argument.

All graphs considered here are finite,  undirected and have at most one edge between two vertices. Given a graph $\Gamma$ we write $\bar{\Gamma}$ to denote the complementary graph of $\Gamma$. We also  define the clique of $\Gamma$ as the size of the largest complete subgraph of $\Gamma$ (ignoring loops) and we denote it by $\omega(\Gamma)$. We will use  the following result of Tur\'an \cite{T}.

\begin{lem}[Tur\'an's theorem]\label{Turan}
Let $\Gamma$ be a loop-free graph with $n$ vertices. Then the number of edges in $\Gamma$ is bounded by

 $$(1-\frac{1}{\omega(\Gamma)})\frac{n^2}{2}.$$
\end{lem}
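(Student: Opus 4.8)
The plan is to prove this (weak form of the) classical theorem of Turán by induction on the number $n$ of vertices of $\Gamma$, writing throughout $r = \omega(\Gamma)$, so that $\Gamma$ contains a clique on $r$ vertices but none on $r+1$ vertices. For the base case $n \le r$ one has the trivial estimate $|E(\Gamma)| \le \binom{n}{2} = n(n-1)/2$; since $n \le r$ gives $n/r \le 1$ and hence $n - 1 \le n - n/r = (1 - 1/r)n$, this is at most $(1-1/r)n^2/2$, as wanted. (If $\Gamma$ has no edges we take $r = 1$, and both sides are $0$.)

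For the inductive step assume $n > r$ and that the bound holds for all loop-free graphs on fewer than $n$ vertices. Since $\omega(\Gamma) = r$, fix a set $A \subseteq V(\Gamma)$ of $r$ pairwise adjacent vertices and put $B = V(\Gamma) \setminus A$, so $|B| = n - r$. I split the edge set of $\Gamma$ into three parts: (a) the edges with both endpoints in $A$, of which there are exactly $\binom{r}{2}$; (b) the edges between $A$ and $B$ — here each vertex of $B$ is adjacent to at most $r-1$ vertices of $A$, for otherwise it would span, together with $A$, a clique on $r+1$ vertices, so this part contributes at most $(r-1)(n-r)$ edges; (c) the edges inside $B$, that is, the edges of $\Gamma[B]$ — since $\omega(\Gamma[B]) \le r$ and $t \mapsto 1 - 1/t$ is increasing, the inductive hypothesis applied to $\Gamma[B]$ bounds this part by $(1 - 1/r)(n-r)^2/2$.

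Summing the three bounds, it remains to verify the identity
\[
\binom{r}{2} + (r-1)(n-r) + \Big(1 - \tfrac{1}{r}\Big)\frac{(n-r)^2}{2} = \Big(1 - \tfrac{1}{r}\Big)\frac{n^2}{2},
\]
which follows at once upon pulling out the common factor $(r-1)/(2r)$ from each term and using $r^2 + 2r(n-r) + (n-r)^2 = n^2$. This closes the induction and establishes the lemma.

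There is no real obstacle here: the only points requiring a word of care are the existence in the inductive step of a clique $A$ of size exactly $r$ (immediate from $\omega(\Gamma) = r$) and, in part (c), the observation that restricting to $\Gamma[B]$ can only lower the clique number and hence only strengthens the factor $1 - 1/\omega$ being used; the whole argument is elementary bookkeeping. (Alternative routes — Zykov symmetrization, or the probabilistic Caro–Wei estimate for an independent set in $\overline{\Gamma}$ combined with convexity — would also yield the stated inequality, but the induction above is the most direct.)
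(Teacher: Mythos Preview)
Your proof is correct: this is the standard clique-removal induction for Tur\'an's theorem, and each step (the base case $n\le r$, the neighbourhood bound in (b), the monotonicity of $1-1/t$ in (c), and the final algebraic identity) is handled properly. The paper itself provides no proof of this lemma; it simply quotes the result and cites Tur\'an's original 1941 paper, so there is nothing to compare against and your self-contained argument is a genuine addition.
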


Now, we define $\Pro(\Gamma)$ as the proportion of edges in $\Gamma$, this is

$$\Pro(\Gamma)=\frac{|\{(x,y)\in V(\Gamma)\times V(\Gamma)|x\sim y\}|}{|V(\Gamma)|^2}.$$
Next result relates $\Pro(\Gamma)$ with the clique of its complementary graph in an special case.

\begin{lem}\label{generalgraph}
If $\Gamma$ is a graph containing all loops, then $\Pro(\Gamma) \omega(\bar{\Gamma})\geq 1$.
\begin{proof}
Since $\bar{\Gamma}$ possesses no loops and it is undirected, we have that the number of edges of $\bar{\Gamma}$ is

$$\frac{|\{(x,y)\in V(\Gamma)\times V(\Gamma)|x\not \sim y\}|}{2}=(1-\Pro(\Gamma))\frac{|V(\Gamma)|^2}{2}.$$

On the other hand, the largest clique in  $\bar{\Gamma}$ has size $ \omega(\bar{\Gamma})$. Thus, by Lemma \ref{Turan}, we have that

$$(1-\Pro(\Gamma))\frac{|V(\Gamma)|^2}{2}\leq (1- \omega(\bar{\Gamma})^{-1}   )\frac{|V(\Gamma)|^2}{2}.$$

The result follows.
\end{proof}
\end{lem}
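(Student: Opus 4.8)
The statement to prove is Lemma \ref{generalgraph}: if $\Gamma$ is a graph containing all loops, then $\Pro(\Gamma) \omega(\bar{\Gamma})\geq 1$.

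The plan is to apply Tur\'an's theorem (Lemma \ref{Turan}) to the complementary graph $\bar{\Gamma}$. First I would record the basic counting: since $\Gamma$ contains all loops, an ordered pair $(x,y)$ with $x \neq y$ is a non-edge of $\Gamma$ if and only if $\{x,y\}$ is an edge of $\bar\Gamma$, and the loops contribute nothing to $\bar\Gamma$. Hence the number of (undirected, loop-free) edges of $\bar\Gamma$ equals $\tfrac12\,|\{(x,y)\in V(\Gamma)\times V(\Gamma) \mid x\not\sim y\}|$, and by the definition of $\Pro(\Gamma)$ this is $(1-\Pro(\Gamma))\,|V(\Gamma)|^2/2$. (Here I am using that $\Pro(\Gamma)$ counts ordered pairs including loops, so the non-edges are exactly the remaining ordered pairs, all of which are off-diagonal.)

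Next I would invoke Lemma \ref{Turan} with $\Gamma$ replaced by $\bar\Gamma$, which has $n = |V(\Gamma)|$ vertices and clique number $\omega(\bar\Gamma)$: the number of edges of $\bar\Gamma$ is at most $(1 - \omega(\bar\Gamma)^{-1})\,n^2/2$. Combining the exact count with this upper bound gives
$$
(1-\Pro(\Gamma))\frac{|V(\Gamma)|^2}{2} \leq \left(1 - \omega(\bar\Gamma)^{-1}\right)\frac{|V(\Gamma)|^2}{2},
$$
and after cancelling $|V(\Gamma)|^2/2$ this rearranges to $\Pro(\Gamma) \geq \omega(\bar\Gamma)^{-1}$, i.e. $\Pro(\Gamma)\,\omega(\bar\Gamma) \geq 1$, as desired.

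There is no real obstacle here; the only point requiring a little care is the bookkeeping between ordered and unordered pairs and the role of the loops, making sure the diagonal is fully accounted for in $\Pro(\Gamma)$ so that every non-edge genuinely corresponds to an off-diagonal pair and hence to an edge of the loop-free graph $\bar\Gamma$. One should also note the degenerate case $|V(\Gamma)| = 0$ or $\bar\Gamma$ edgeless, where the inequality holds trivially (with the convention that $\omega$ of an edgeless graph is $1$, or by reading the chain of inequalities directly).
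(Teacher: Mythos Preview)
Your proof is correct and follows essentially the same approach as the paper: count the edges of $\bar\Gamma$ as $(1-\Pro(\Gamma))|V(\Gamma)|^2/2$ using that $\Gamma$ contains all loops, then apply Tur\'an's theorem (Lemma~\ref{Turan}) to $\bar\Gamma$ and rearrange. Your version is slightly more explicit about the ordered/unordered bookkeeping and the degenerate cases, but the argument is the same.
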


Let $\pi$ be a set of primes and $G$ be a finite group. We define the commuting $\pi$-graph of $G$, which we denote as $\Gamma_{\pi}(G)$, as the graph whose vertices are the $\pi$-elements of $G$ and two $\pi$-elements are joined if they commute. We define the non-commuting $\pi$-graph of $G$ as the complementary of the graph $\Gamma_{\pi}(G)$, and we define the $\pi$-probability of $G$ as

$$\Pro_{\pi}(G)=\Pro(\Gamma_{\pi}(G))=\frac{|\{(x,y)\in G_{\pi}\times G_{\pi}| xy=yx\}|}{|G_{\pi}|^2},$$
where $G_{\pi}$ is the set of $\pi$-elements of $G$. Since every element commutes with itself, we have that $\Gamma_{\pi}(G)$ contains all loops. In addition, the largest clique in the non-commuting $\pi$-graph has size $n_{\pi}(G)$. Therefore,  applying Lemma \ref{generalgraph}, we have the following result.

\begin{lem}\label{equiv}
If $G$ is a finite group and $\pi$ is a set of primes, then $\Pro_{\pi}(G) n_{\pi}(G)\geq 1$.
\end{lem}

The following question was formulated by Burness, Guralnick, Moret\'o and Navarro at the end of \cite{bgmn}: is it true that $\Pr_p(G)\to0$ when $p$ is a prime, $G$ is a group generated by its $p$-elements, $\bO_p(G)=1$, $|G|\to\infty$? We show that a local version of Pyber's theorem is a consequence of this question.

\begin{thm}
\label{localneu}
Suppose that the above mentioned question has an affirmative answer. Then $|G|$ is bounded from above in terms of $n_p(G)$ when $p$ is a prime, $G$ is a group generated by $p$-elements with $\bO_p(G)=1$. 
\end{thm}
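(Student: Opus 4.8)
The plan is to argue by contradiction, the only real input being Lemma~\ref{equiv}. Suppose the conclusion fails: that is, $|G|$ is \emph{not} bounded from above in terms of $n_p(G)$ over the class of finite groups $G$ generated by their $p$-elements with $\bO_p(G) = 1$. Since the condition $n_p(G) \leq k$ allows only finitely many values of $n_p(G)$, the failure of the bound means there is a fixed positive integer $k$ and an infinite sequence $G_1, G_2, \ldots$ of such groups with $n_p(G_m) \leq k$ for all $m$ while $|G_m| \to \infty$.

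Next I would apply Lemma~\ref{equiv} to each $G_m$ with $\pi = \{p\}$: from $\Pr_p(G_m)\, n_p(G_m) \geq 1$ and $n_p(G_m) \leq k$ we obtain $\Pr_p(G_m) \geq 1/k$ for every $m$. On the other hand, each $G_m$ satisfies precisely the hypotheses of the question of Burness, Guralnick, Moret\'o and Navarro recalled above ($p$ prime, $G_m$ generated by its $p$-elements, $\bO_p(G_m) = 1$, $|G_m| \to \infty$), so the assumed affirmative answer yields $\Pr_p(G_m) \to 0$. This contradicts $\Pr_p(G_m) \geq 1/k > 0$, and the theorem follows.

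There is no serious obstacle here; the mathematical content sits entirely in Lemma~\ref{equiv} (and hence in Tur\'an's theorem, Lemma~\ref{Turan}) together with the hypothesised positive answer to the commuting-probability question. The only minor points to verify are that $\Pr_p(G)$ is well defined — the set of $p$-elements always contains the identity, so the denominator $|G_p|^2$ is nonzero — and that $n_p(G) \geq 1$, so that the inequality $\Pr_p(G) \geq 1/n_p(G)$ is meaningful; both are immediate.
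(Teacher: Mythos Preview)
Your proof is correct and follows essentially the same route as the paper: both apply Lemma~\ref{equiv} to obtain $\Pr_p(G)\,n_p(G)\geq 1$ and then combine this with the assumed affirmative answer to the commuting-probability question. The paper phrases it directly (if $|G|\to\infty$ then $n_p(G)\to\infty$) while you phrase it as a contradiction, but the content is identical.
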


\begin{proof}
By Proposition \ref{equiv}, we have that 

$$n_p(G)\geq \frac{1}{\Pro_p(G)}$$
and by hypothesis $\frac{1}{\Pro_p(G)}$ tends to infinite as $|G|$ increases. Thus, $n_p(G)$ tends to infinite as $|G|$ increases and hence $|G|$ is bounded in terms of $n_p(G)$.
\end{proof}

It is worth remarking that another application of Lemma \ref{Turan} in this context appeared in the paragraph that follows Theorem 1.11 of \cite{gm}. 

\medskip

Let $G$ be a finite group and let $a(G), b(G),c(G),\ldots$ be invariants of $G$. As usual, we will say that $a(G)$ is bounded in terms of $b(G),c(G),\ldots$ (or that $a(G)$ is $(b(G),c(G),\ldots)$-bounded) if there exists a real valued function $h$ such that $a(G)\leq h(b(G),c(G),\ldots)$ for all finite groups.

We conclude this section with the proof of Theorem H. If $G$ is a 
finite group and $m, n$ are positive integers, we will say that $G\in C(m,n)$ if for every $S_1,S_2\subseteq G$ with $|S_1|=m, |S_2|=n$, there exist $x\in S_1, y\in S_2$ such that $xy=yx$. It was proved in Theorem 1.1 of \cite{aahz06} that there exists a function $f(m,n)$ such that if $G\in C(m,n)$ and $|G|>f(m,n)$ then $G$ is abelian. We prove the following local version, guaranteeing the existence of abelian Hall $\pi$-subgroups. If $\pi$ is a set of primes, we say that $G\in C(m,n)$ if for every $S_1,S_2\subseteq G_{\pi}$ with $|S_1|=m, |S_2|=n$, there exist $x\in S_1, y\in S_2$ such that $xy=yx$. 

Let $\pi$ be a set of primes. 
Recall that a group is $\pi$-separable if it has a normal series with factor groups that are $\pi$-groups of $\pi'$-groups. The $\pi$-separable radical of a finite group is the largest normal $\pi$-separable subgroup. Recall also  that $\pi$-separable groups have Hall $\pi$-subgroups (see Theorem 3.20 of \cite{isa}). We will also use Hall-Higman's Lemma 1.2.3 (Theorem 3.21 of \cite{isa}): if $G$ is $\pi$-separable and $\bO_{\pi'}(G)=1$, then $\bC_G(\bO_{\pi}(G))\leq\bO_{\pi}(G)$.

\begin{proof}[Proof of Theorem H]

1) We start with some elementary observations that we will use in the proof. Let $H\leq G$ and $N\trianglelefteq G$. We clearly have that $n_{\pi}(H)\leq n_{\pi}(G)$ and $n_{\pi}(G/N)\leq n_{\pi}(G)$. Furthermore, if $G\in C_{\pi}(m,n)$ then $H\in
C_{\pi}(m,n)$ and $G/N\in C_{\pi}(m,n)$. Note also that if $G\in C_{\pi}(m,n)$ then $n_{\pi}(G)<m+n$. Note also that if $p\in\pi$ then $n_p(G)\leq n_{\pi}(G)$.

2) Next, we suppose that $G$ possesses Hall $\pi$-subgroups and claim that if $|H|>f(m,n)$, where $H$ is a Hall $\pi$-subgroup of $G$ and $f$ is the function from Theorem 1.1 of \cite{aahz06}, then $H$ is abelian. Let $H$ be a Hall $\pi$-subgroup of $G$. We note that $H\in C_{\pi}(m,n)=C(m,n)$. By Theorem 1.1 of \cite{aahz06}, we have that if $|H|>f(m,n)$ then $H$ is abelian, as claimed.

\medskip

 Now, it suffices to show that if $G\in C_{\pi}(m,n)$ and  $|G|_{\pi}$ is large (not bounded in terms of $n$ and $m$), then $G$ possesses Hall $\pi$-subgroups. We will write $R$ to denote the $\pi$-separable radical of $G$. 

\medskip

3) Next, we claim that if $G\in C_{\pi}(m,n)$ has trivial $\pi$-separable radical then $|G|$ is $(m,n)$-bounded. Since $n_{\pi}(G)<m+n$, it sufficed to prove that $|G|$ is $n_{\pi}(G)$-bounded. By hypothesis, the generalized Fitting subgroup $\bF^*(G)$ of $G$ is a product of nonabelian simple groups, all of them of order divisible by some prime in $\pi$. Write $\bF^*(G)=S_1\times\cdots\times S_t$, where $S_i$ is simple nonabelian. By elementary properties of the generalized Fitting subgroup (Theorem 9.8 of \cite{isa}), we have that $G$ is isomorphic to a subgroup of $\Aut(\bF^*(G))$, so it suffices to prove that $|\bF^*(G)|$ is $n_{\pi}(G)$-bounded. For this, we will see first that the order of every (abelian simple) direct factor of $\bF^*(G)$ is $n_{\pi}(G)$-bounded. 
This follows from Theorem \ref{simnc} and the fact that $n_p(G)\leq n_{\pi}(G)$ for every $p\in\pi$. Now, it suffices to prove that $t$ is $n_{\pi}(G)$-bounded. We pick a noncommuting subset $\{a_i,b_i\}\subset (S_i)_{\pi}$ for every $i=1,\dots,t$. For $j=1,\dots, t$, we define 
$$
y_j=\prod_{i=1}^ja_i\prod_{i=j+1}^tb_i.
$$
We have that $\{y_1,\dots,y_t\}$ is a noncommuting subset of $(\bF^*(G))_{\pi}$. Thus $t\leq n_{\pi}(\bF^*(G))\leq n_{\pi}(G)$, as wanted.

4) Finally, let $R$ be the $\pi$-separable radical of $G$.  We suppose that $G\in C_{\pi}(m,n)$ does not have Hall $\pi$-subgroups and we want to prove that $|G|_{\pi}$ is $(m,n)$-bounded. 
By the previous paragraph, we know that $|G/R|$ is $(m,n)$-bounded, so it suffices to prove that $|R|_{\pi}$ is $(m,n)$-bounded. 

Suppose first that $R$ does not possess abelian Hall $\pi$-subgroup. Since $R\in C_{\pi}(m,n)$ and $R$ possesses Hall $\pi$-subgroup, we have by 2) that $|R|_{\pi}$ is $(m,n)$-bounded, as wanted. 

Assume now that $R$ possesses abelian Hall $\pi$-subgroups. We may assume that $\bO_{\pi'}(G)=1$. Now, Hall-Higman's Lemma 1.2.3 implies that  $K=\bO_{\pi}(G)$ is a normal Hall $\pi$-subgroup of $R$.

Since $G$ does not have Hall $\pi$-subgroups, we deduce that $G/K 
$ does not have Hall $\pi$-subgroups. Thus, there exist $x,y\in G_{\pi}$ such that $xK$ and $yK$ do not commute.  Therefore, for any $g\in xK$, $h\in yK$, $gh\neq hg$. It follows that $|K|\leq\max\{m,n\}$. This implies that $|R|_{\pi}=|K|$ is $(m,n)$-bounded, as wanted.  
\end{proof}

\section{Examples and further questions}

We conclude with some examples showing that our results are close to best possible. First, if $G$ is any almost simple group with socle $S$ such that $G/S\cong C_2\times C_2$, then we have that $G$ is covered by three maximal subgroups $H_1, H_2$ and $H_3$. Thus, if $x \in G_2$, then $x^G\subseteq H_1\cap H_2 \cap H_3$ and hence there exists some $H_i$ containing at least $1/3$ of the elements in $x^G$, or equivalently $\fpr(x,G/H_i)\geq 1/3$. Since, there are infinitely many  simple groups whose outer automorphism group contains a subgroup isomorphic to $C_2\times C_2$, This proves that Theorems B and G do not hold for almost simple groups.

It is easy to see that if $p\geq3$ is a prime, $G=A_{2p-1}$ and $H=A_{2p-2}$, then $\nu_p(H)/\nu_p(G)=(p-1)/(2p-1)$. Thus, the first bound in Theorem  C is best possible. Since in these examples $|G|_p=p$ and, by Lemma \ref{SylowFPR}, this quotient of Sylow numbers is a fixed point ratio, the first bound of Theorem A is also sharp.


Regarding Conjecture D, for all groups $G$ in the PerfectGroups library in GAP \cite{GAP} and every subgroup $H<G$ with $\nu_p(H) < \nu_p(G)$, the inequality $$\nu_p(H) \leq \frac{p-1}{2p-1} \nu_p(G)$$ holds when $p>3$. For all the counterexamples $(G,H)$ for $p=3$ that we found, we have $\nu_p(H) = (4/7) \cdot \nu_p(G)$. This suggests that in Conjecture D perhaps $f(3)$ may be taken to be $4/7$. The only simple groups in the library where this equality is attained are $\mathrm{PSU}_{3}(3)$ and $\mathrm{SL}_{3}(4)$. This is consistent with the possibility that Theorem E holds for odd primes without the hypothesis that $H$ contains a Sylow $p$-subgroup of $G$. 

Next, we present both solvable and non-solvable examples showing that Theorem E is false even for groups generated by a conjugacy class of $p$-elements. Let $n\geq 3$ be an integer. Let $G_1=S_n$ and for every $m\geq 2$ define $G_m=G_{m-1}\wr S_n$ Let $H< S_n$ such that $H$ contains a Sylow $2$-subgroup of $S_n$ and $H_m=G_{m-1}\wr H$. Then 
$$\frac{\nu_2(H_m)}{\nu_2(G_m)}=\frac{\nu_2(H)}{\nu_2(S_5)}$$
for all $m$.

Furthermore, by Lemma 2.1(i) of \cite{mmm}, $\sigma_2(G_m)\leq\sigma_2(G_m/(G_{m-1})^n)=\sigma_2(S_n)$ does not go to infinity when $m\to\infty$. Therefore, Theorem G is also false for groups generated by a conjugacy class of $p$-elements.

We have seen in Theorem F that if $G$ is a group generated by $p$-elements, then $\sigma_p(G)\geq p+1$. In view of Theorem \ref{gstr}, it seems natural to ask the following. Let $G$ be a simple group and let $x\in G$ be a $p$-element. Is it true that the minimal size of a covering of $x^G$ is at least $p+1$? Perhaps, this could be true even for not necessarily simple groups with $G=\langle x^G\rangle$.

\subsection*{Acknowledgments}  Parts of this work were done when the second author was visiting the University of Valencia and the fourth and fifth authors were visiting the Hun-Ren Alfr\'ed R\'enyi Institute of Mathematics. They thank the corresponding research teams for their hospitality.

\end{document}